
 \documentclass[11pt,letterpaper]{amsart}

\usepackage{amssymb,amsmath,pgf}
\usepackage{tikz}
\usetikzlibrary{arrows,patterns}

\newtheorem{theorem}{Theorem}[section]
\newtheorem{corollary}[theorem]{Corollary}
\newtheorem{definition}[theorem]{Definition}
\newtheorem{lemma}[theorem]{Lemma}
\newtheorem{proposition}[theorem]{Proposition}
\newtheorem{remark}[theorem]{Remark}

\newtheorem{example}[theorem]{Example}

\newenvironment{proofof}[1]{\noindent{\it Proof of
#1.}}{\hfill$\square$\\\mbox{}}


\def\qui{Q}
\def\ugqui{Q}
\def\source{\mathrm{s}} 
\def\target{\mathrm{t}}

\def\F{\mathbb{F}}
\def\rep{\mathcal{R}}
\def\semi{\mathcal{S}}
\def\moduli{\mathcal{M}}

\def\Z{\mathrm{Z}}
\def\GL{\mathrm{GL}} 
\def\SL{\mathrm{SL}}


\begin{document}
 
\title{Quiver moduli spaces of a given dimension}
\author{{M\'aty\'as Domokos}}
\thanks{Partially supported by NKFIH K 138828 and K 132002. }
\address{HUN-REN Alfr\'ed R\'enyi Institute of Mathematics,
Re\'altanoda utca 13-15, 1053 Budapest, Hungary,
ORCID iD: https://orcid.org/0000-0002-0189-8831}
\email{domokos.matyas@renyi.hu}

\subjclass[2010]{Primary 14L24; Secondary 13A50, 16G20} 
\keywords{moduli space of quiver representations, semi-invariants of quivers}
\date{}
\maketitle 


\begin{abstract} 
It is shown that certain transformations on quiver-dimension vector pairs induce 
isomorphisms on the corresponding moduli spaces of quiver representations and 
map a stable dimension vector to a stable dimension vector. 
This result combined with a combinatorial analysis of dimension vectors in the fundamental 
set of a wild quiver is applied to prove that in each dimension there are only finitely many 
projective algebraic varieties occurring as a 
moduli space of representations of a quiver with a dimension vector that 
satisfies some simple constraints. 
\end{abstract}

\section{Introduction} \label{sec:intro} 

The moduli spaces introduced by King \cite{king} provide a link between 
algebraic geometry and representation theory,  by which quiver representations are 
involved in several areas of mathematics. Quiver Grassmannians give another construction associating projective algebraic varieties to quiver representations. 
Every projective algebraic variety can be realized as a 
fine moduli space of representations of a quiver with relations (see \cite{hille:1}, 
\cite{huisgen-zimmermann:1}) and also as a quiver Grassmannian 
(see \cite{huisgen-zimmermann:2}, \cite{reineke}). 
As it is remarked in \cite{ringel:1}, these results can be traced back to \cite{beilinson}, 
and have several variants and refinements, see 
\cite{bongartz_huisgen-zommermann}, \cite{craw-smith}, \cite{derksen_huisgen-zimmermann_weyman},   \cite{hille:2}, 
\cite{ringel:1}, \cite{ringel:2}, \cite{ringel:3}  for a sample.

 On the other hand, the class of moduli spaces of representations of quivers (with no relations) 
 is more exclusive in this respect. For example, as it is observed in 
 \cite[Page 4]{fei}, the only $1$-dimensional projective variety that occurs as a moduli space of representations of some quiver is the projective line $\mathbb{P}^1$. 
Developing further  some earlier works in \cite{altmann-straten} and \cite{altmann-etal}, it was pointed out in \cite{domokos-joo} that in each dimension there are only finitely many toric varieties that can be realized as the moduli space of thin representations of quivers, and a  procedure for their classification is outlined. 
Here "thin" means that the dimension vector is constant $1$, in which case the resulting moduli spaces have a canonical structure of a toric variety. 
Note also that the number of isomorphism classes of algebraic varieties that occur as a moduli space of representations of a quiver is countable. Indeed, the number of 
possible quiver-dimension vector pairs is countable, and for a fixed quiver and a fixed dimension vector there are finitely many GIT-equivalence classes of weights, hence there are finitely many possible moduli spaces.

The present paper is motivated by the following questions: 
how many $d$-dimensional projective algebraic varieties occur as 
a moduli space of representations of a quiver for a given $d$, and how large quivers and dimension vectors are needed to realize them? 

Recall that the moduli spaces $\moduli(\qui,\alpha,\theta)$ defined in  \cite{king} are quasi-projective varieties depending on a quiver $\qui$, a dimension vector $\alpha$, and a weight $\theta$. Under certain conditions on $(\qui,\alpha)$, we shall define  
$(\qui',\alpha')$ and a weight $\theta'$ such that 
$\moduli(\qui,\alpha,\theta)\cong \moduli(\qui',\alpha',\theta')$; moreover, if $\alpha$ is $\theta$-stable, then $\alpha'$ is $\theta'$-stable (see Theorem~\ref{thm:main}).   
In good circumstances, with an iterated use of the above transformations we can get to 
a quiver-dimension vector pair that is "smaller" in an appropriate sense than $(\qui,\alpha)$ 
(see Corollary~\ref{cor:main}, Corollary~\ref{cor:sketch}).  

These reduction steps are higher dimensional analogues of the transformations  
from \cite{domokos-joo} that were sufficient to prove that in each dimension there are only finitely many toric varieties that can be realized as a moduli space of thin representations of quivers. The reduction steps in \cite{domokos-joo} were proved by establishing integral-affine equivalences between lattice polytopes. The higher dimensional analogues in the present paper are proved with the aid of 
classical invariant theory (and in the case of the result about stability, some basic facts on the geometry of representation spaces of quivers). 

This technique (combined with a combinatorial study of dimension vectors in the fundamental set, see Theorem~\ref{thm:bounding the quiver}) suffices to prove 
Theorem~\ref{thm:fundamental domain}, asserting that in each dimension only finitely many algebraic varieties can be realized as a moduli space of the form 
$\moduli(\qui,\alpha,\theta)$, where $\alpha$ is a $\theta$-stable dimension vector in the fundamental set for $\qui$. Note that to classify moduli spaces of quiver representations it is sufficient to deal with stable dimension vectors (see Proposition~\ref{prop:product}). 
A $\theta$-stable dimension vector is necessarily a root. 
Non-trivial moduli spaces  can occur only for a non-real root, and every non-real root belongs to the Weyl group orbit of a root in the fundamental set (see \cite{kac}).  
Therefore it seems natural to focus first on the dimension vectors  in the fundamental set, 
as we do in Theorem~\ref{thm:fundamental domain}. 

As an application of this result, we show in Proposition~\ref{prop:2 or n1} that under certain restrictions on the dimension vector all moduli spaces are isomorphic to products of the moduli spaces covered by Theorem~\ref{thm:fundamental domain}. 
The assumption on the dimension vector here is very restrictive, but still this 
gives Corollary~\ref{cor:toric generalization}, which is 
a direct generalization of the corresponding statement from \cite{domokos-joo} on the toric case.

In Section~\ref{sec:prel} we recall notation and terminology related to quivers, their representation spaces and the moduli spaces of quiver representations.  
Our main results are stated in Section~\ref{sec:main results}, and the steps in their proofs are outlined. In Section~\ref{sec:invariant theory} we develop our reduction steps that provide isomorphisms between algebras of relative invariants of different  quiver-dimension vector pairs. In Section~\ref{sec:stable} we show that these reduction steps are  compatible with stability.  
In Section~\ref{sec:fundamental set} we study dimension vectors in the fundamental set of a wild quiver.  
The results proved in Section~\ref{sec:invariant theory}, Section~\ref{sec:stable}, and 
Section~\ref{sec:fundamental set} have some independent interest, as they may be 
useful in further studies of moduli spaces of representations of quivers. 
We use them in  Section~\ref{sec:moduli} and Section~\ref{sec:consequences} 
to prove our finiteness statements (Theorem~\ref{thm:fundamental domain}, 
Corollary~\ref{cor:toric generalization}).    
 For the latter we need to deduce from the literature 
in Section~\ref{sec:products} the proof 
of Proposition~\ref{prop:product} on the product decomposition  of 
a moduli space of quiver representations coming from the $\theta$-stable decomposition. 
In Section~\ref{sec:example} we provide an example 
indicating that in order to prove that there are finitely many moduli spaces of quiver representations in each dimension, one probably needs to develop further techniques.  
Finally, in Section~\ref{sec:affine} we comment on the affine counterpart of these moduli spaces, the varieties parameterizing semisimple representations  of quivers. 


\section{Preliminaries and main statements} \label{sec:prel} 

\subsection{Quivers and the associated quadratic forms} 

A \emph{quiver} $\qui$ is a finite directed graph with (non-empty) vertex set $\qui_0$ and arrow set 
$\qui_1$; for an arrow $a\in \qui_1$ denote by $\source a\in \qui_0$ its source and by $\target a\in \qui_0$ its target. 
A \emph{path} in $\qui$ is a sequence $a_1,\dots,a_n$ of arows such that $\target a_i=\source a_{i+1}$ 
for $i=1,\dots,n-1$. A quiver $\qui$ is  \emph{acyclic} if there is no path in $\qui$ of positive length starting and ending at the same vertex. 
By the \emph{underlying graph} of $\qui$ we mean the graph (possibly with multiple edges) obtained by forgetting the orientation of the arrows, 
and we shall keep the symbol $\ugqui$ to denote it. So the vertex set of this graph is 
$\qui_0$ and the edge set of this graph is $\ugqui_1$, where we forget the orientation of $a\in \qui_1$ when we consider it as an edge of $\ugqui$. 
We say that the quiver $\qui$ is \emph{connected} if its underlying graph 
is connected. The \emph{degree} of a vertex $v$ of $\ugqui$ is 
\[\deg_{\ugqui}(v)=|\{a\in \qui_1\mid \source a=v\}|
+|\{a\in \qui_1\mid \target a=v\}|.\]  
A \emph{dimension vector} is an element $\alpha\in \mathbb{N}^{\qui_0}$ (interpreted as a map from $\qui_0$ into the set $\mathbb{N}$  of non-negative integers). 
Given a dimension vector $\alpha$ write $\mathrm{supp}(\alpha)$ for the full subquiver of $\qui$ spanned by the vertices $v\in \qui_0$ with $\alpha(v)\neq 0$. 
A dimension vector $\alpha$ (or the pair $(\qui,\alpha)$) is said to be \emph{sincere} if $\alpha(v)>0$ for all 
$v\in \qui_0$ (i.e. $\mathrm{supp}(\alpha)=\qui$). 
For $v\in \ugqui_0$ we shall denote by $\varepsilon_v\in \mathbb{N}^{\ugqui_0}$ the dimension vector with 
\[\varepsilon_v(w)=\begin{cases} 1\text{ for }w=v \\ 
0 \text{ for }w\neq v.\end{cases}\] 
The \emph{Ringel bilinear form} on $\mathbb{R}^{\qui_0}$ is given by 
\[\langle \alpha,\beta\rangle _{\qui}:=\sum_{v\in \qui_0}\alpha(v)\beta(v)-\sum_{a\in \qui_1}
\alpha(\source a)\beta(\target a)\ \text{ for }\alpha,\beta\in \mathbb{R}^{\qui_0}.\] 
The specialization $\alpha=\beta$ in the above formula gives the \emph{Tits quadratic form}  
\[\alpha\mapsto \langle\alpha,\alpha\rangle_{\qui}, \quad \mathbb{R}^{\ugqui_0}\to 
\mathbb{R};\] 
it depends only on the underlying graph of the quiver $\qui$, and does not depend on the orientation.  
Polarizing the Tits form we get the symmetric bilinear form 
\begin{align*}(\alpha,\beta)_{\ugqui}&:=\langle\alpha+\beta,\alpha+\beta\rangle_{\qui}-\langle\alpha,\alpha\rangle_{\qui}-\langle \beta,\beta\rangle_{\qui}=\langle\alpha,\beta\rangle _{\qui}+\langle \beta,\alpha\rangle _{\qui}
\\ & =2\sum_{v\in \ugqui_0}\alpha(v)\beta(v)-
\sum_{a\in \qui_1}(\alpha(\source a)\beta(\target a)+\alpha(\target a)\beta(\source a)).  
\end{align*}
We call it the \emph{Cartan form} associated with $\ugqui$, 
because the matrix $C_{\ugqui}$ of this bilinear form with respect to the basis 
$\{\varepsilon_v\mid v\in \ugqui_0\}$ in $\mathbb{R}^{\ugqui_0}$ is the so-called 
\emph{Cartan matrix} corresponding to $\ugqui$ (cf. \cite{kac}). 

In this paper by \emph{(extended) Dynkin graph} we mean a (extended) Dynkin graph of ADE type (listed for example in \cite[page 113]{kraft-riedtmann}). 
A quiver is called (extended) Dynkin if its underlying graph is (extended) Dynkin. 
Following \cite{kraft-riedtmann} we call 
\[F_{\ugqui}:=\{\alpha\in \mathbb{N}^{\ugqui_0}\setminus \{\underline 0\}\mid \forall v\in \ugqui_0:(\alpha,\varepsilon_v)_{\ugqui}\le 0, \ \mathrm{supp}(\alpha)\text{ is connected}\}\] 
the \emph{fundamental set} for $\qui$ (it depends only on the underlying graph of $\qui$). 
Note that $F_{\ugqui}$ is empty for a Dynkin graph $\ugqui$, 
$F_{\ugqui}$ consists of the positive integer multiples of a non-zero dimension vector $\delta$ satisfying $(\delta,\varepsilon_v)_{\ugqui}=0$ for all $v\in \ugqui_0$ for an extended Dynkin graph $\ugqui$. 
If $\qui$ is a connected quiver which is neither Dynkin nor extended 
Dynkin (i.e. $\qui$ is a connected \emph{wild quiver}), then $F_{\ugqui}$ contains an element $\alpha$ such that $(\alpha,\varepsilon_v)<0$ for some $v\in \ugqui_0$ 
by \cite[Theorem 3 and Lemma 13]{vinberg} 
(see also \cite[Lemma 1.2]{kac} or \cite[Lemma 3.1.1]{kraft-riedtmann}).

For dimension vectors $\alpha,\beta\in \mathbb{N}^{\ugqui_0}$ we write 
$\beta\le \alpha$ if $\beta(v)\le \alpha(v)$ for all $v\in \ugqui_0$, and set 
$|\alpha|:=\sum_{v\in\ugqui_0}\alpha(v)$.

\subsection{Quiver representations} \label{subsec:quivrep}

Throughout this paper we fix an algebraically closed base field $\F$ of arbitrary characteristic. 
Recall that a \emph{representation} $R$  of $\qui$ consists of a set of distinct $\F$-vector spaces 
$\{R_v\mid v \in \qui_0\}$ and an assignment $a\mapsto R_a$ $(a\in \qui_1)$, where 
$R_a$ is an $\F$-linear map from $R_{\source a}$ to $R_{\target a}$. 
We call $\underline{\dim}(R):=(\dim_{\F}(R_v)\mid v\in \qui_0)\in\mathbb{N}^{\qui_0}$ the \emph{dimension vector of} $R$. 
A representation $S$ of $\qui$ is a \emph{subrepresentation of} $R$ if $S_v$ is a subspace of $R_v$ for each $v\in \qui_0$,  
$R_a$ maps $S_{\source a}$ into $S_{\target a}$ for all $a\in \qui_1$, 
and $S_a$ is the restriction of $R_a$ to $S_{\source a}$. 

By a \emph{weight for} $\qui$ we mean an integer valued function 
$\theta\in \mathbb{Z}^{\qui_0}$ on the set of vertices of $\qui$. 
Following \cite{king}, a representation $R$ of $\qui$ is said to be 
\emph{$\theta$-semistable} for some weight $\theta\in \mathbb{Z}^{\qui_0}$ if $\sum_{v\in \qui_0}\theta(v)\dim_{\F}(R_v)=0$ and 
$\sum_{v\in \qui_0}\theta(v)\dim_{\F}(S_v)\ge 0$ for all subrepresentations $S$ of $R$. 
A non-zero $\theta$-semistable representation $R$ of $\qui$ is \emph{$\theta$-stable} if for any non-zero proper subrepresentation $S$ of $R$ we have $\sum_{v\in \qui_0}\theta(v)\dim_{\F}(S_v)> 0$. The dimension vector $\alpha\in \mathbb{N}^{\qui_0}$ is \emph{$\theta$-semistable} 
(respectively \emph{$\theta$-stable}) if there exists a $\theta$-semistable (respectively 
$\theta$-stable) representation of $\qui$ of dimension vector $\alpha$.  

The \emph{space of representations} of the quiver $\qui$ with dimension vector $\alpha\in \mathbb{N}^{\qui_0}$ is 
\[\rep(\qui,\alpha):=\bigoplus_{a\in \qui_1}\mathrm{Hom}_{\F}(\F^{\alpha(\source a)},\F^{\alpha(\target a)}).\] 
This is a non-zero $\F$-vector space provided that $\mathrm{supp}(\alpha)$ has at least one arrow. 
The \emph{base change group} $\GL(\alpha):=\prod_{v\in \qui_0}\GL(\F^{\alpha(v)})$ acts linearly on $\rep(\qui,\alpha)$: for 
$g=(g_v\mid v\in \qui_0)$ and $x=(x_a\mid a\in \qui_1)$ we have 
\[(g\cdot x)_a=g_{\target a}\circ x_a\circ g_{\source a}^{-1}.\]  
Identify $x=(x_a\mid a\in \qui_1)\in \rep(\qui,\alpha)$ with the representation of $\qui$ which assigns a copy of $\F^{\alpha(v)}$ with $v\in \qui_0$ 
and the linear map $x_a$ from 
$x_{\source a}=\F^{\alpha(\source a)}\to x_{\target a}=\F^{\alpha(\target a)}$ with $a\in \qui_1$. 
The points $x$ and $y$ in $\rep(\qui,\alpha)$ belong to the same $\GL(\alpha)$-orbit if and only if the corresponding representations are isomorphic. 

We shall denote by $\rep(\qui,\alpha)^{\theta\tiny{\text{-sst}}}$ the set of those $x\in \rep(\qui,\alpha)$ for which the corresponding representation $x$ is $\theta$-semistable, and 
denote by $\rep(\qui,\alpha)^{\theta\tiny{\text{-st}}}$ the  set of those $x\in \rep(\qui,\alpha)$ for which the corresponding representation $x$ is $\theta$-stable. 
Clearly these are $\GL(\alpha)$-invariant subsets of $\rep(\qui,\alpha)$. 

The representation space $\rep(\qui,\alpha)$ is naturally identified with 
$\rep(\qui',\alpha')$, where  $\qui'=\mathrm{supp}(\alpha)$, and 
$\alpha'=\alpha\vert_{\qui'_0}$. For a weight $\theta\in \mathbb{Z}^{\qui_0}$ denote by $\theta'$ the restriction of $\theta$ 
to $\qui'_0$. 
Then $\rep(\qui,\alpha)^{\theta\tiny{\text{-sst}}}$ is identified with 
$\rep(\qui',\alpha')^{\theta'\tiny{\text{-sst}}}$ and 
$\rep(\qui,\alpha)^{\theta\tiny{\text{-st}}}$ is identified with 
$\rep(\qui',\alpha')^{\theta'\tiny{\text{-st}}}$. 
In particular, $\alpha$ is $\theta$-semistable 
($\theta$-stable) if and only if $\alpha'$ is  $\theta'$-semistable 
($\theta'$-stable). Thus frequently in our discussions it will cause no loss of generality 
to assume that we are dealing with a sincere dimension vector for our quiver. 
Moreover, in the sequel we shall identify quiver-dimension vector pairs that are isomorphic. 
Here we say that $(\qui,\alpha)$ \emph{is isomorphic to} $(\qui',\alpha')$ if there exists a bijection 
$\pi:\qui_0\to \qui'_0$ such that for all $(v,w)\in \qui_0\times \qui_0$ we have 
$|\{a\in \qui_1\mid \source a=v,\ \target a =w\}|=|\{a'\in \qui'_1\mid \source a'=\pi(v),\ \target a' =\pi(w)\}|$ and $\alpha'(\pi(v))=\alpha(v)$.

\subsection{Semi-invariants and moduli spaces of quiver representations} \label{subsec:prel-semiinv}

Throughout this section $\alpha$ is a sincere dimension vector for a quiver $\qui$. 
We associate with a weight $\theta\in \mathbb{Z}^{\qui_0}$ the character 
\[\chi_{\theta}:\GL(\alpha)\to \F^\times,\quad \chi_{\theta}(g)=
\prod_{v\in \qui_0}\det(g_v)^{\theta(v)}\]  
(as $\alpha$ is sincere, the map $\theta\mapsto \chi_{\theta}$ from $\mathbb{Z}^{\qui_0}$ 
onto the character group of $\GL(\alpha)$ is injective). 
Denote by $\F[\rep(\qui,\alpha)]$ the coordinate ring of $\rep(\qui,\alpha)$; this is a polynomial algebra in $\sum_{a\in \qui_1}\alpha(\target a)\cdot\alpha(\source a)$ variables. 
The \emph{space of relative invariants of weight $\theta$ on $\rep(\qui,\alpha)$} is 
\begin{align*}
\F[\rep(\qui,\alpha)]^{\GL(\alpha),\theta}
:=\{p\in \F[\rep(\qui,\alpha)] \mid 
\forall g\in \GL(\alpha), \forall x\in \rep(\qui,\alpha):
\\ p(g\cdot x)=\chi_{\theta}(g)p(x)\}.\end{align*} 
This is contained in the algebra 
$\F[\rep(\qui,\alpha)]^{\SL(\alpha)}$ of \emph{semi-invariants} of $\qui$ consisting of the polynomial functions $p$ satisfying $p(g\cdot x)=p(x)$ for all $g\in \SL(\alpha)$ and 
$x\in \rep(\qui,\alpha)$, where $\SL(\alpha)=\prod_{v\in \qui_0}\SL(\alpha(v))$ is the commutator subgroup of $\GL(\alpha)$. 
Note that spanning sets of $\F[\rep(\qui,\alpha)]^{\SL(\alpha)}$ were determined in 
\cite{derksen-weyman:0}, \cite{domokos-zubkov}, \cite{schofield-vandenbergh}.  
We set 
\[ \semi(\qui,\alpha,\theta)_n=
\begin{cases} 
\F[\rep(\qui,\alpha)]^{\GL(\alpha),n\theta} &\text{ if }\theta\neq 0 
\text{ or }n=0 
\\ 0 &\text{ if }\theta=0\text{ and }n>0.
\end{cases}\] 
The algebra  $\F[\rep(\qui,\alpha)]^{\SL(\alpha)}$ contains the subalgebra 
\[\semi(\qui,\alpha,\theta):=\bigoplus_{n=0}^\infty \semi(\qui,\alpha,\theta)_n.\]  
We shall treat it as a graded algebra,  with $\semi(\qui,\alpha,\theta)_n$ as its degree $n$ homogeneous component. 
Note that 
\[\semi(\qui,\alpha,\theta)_0=\semi(\qui,\alpha,0)=\F[\rep(\qui,\alpha)]^{\GL(\alpha)},\] 
the algebra of polynomial $\GL(\alpha)$-invariants on $\rep(\qui,\alpha)$.  
Its generators were described in \cite{lebruyn-procesi} in characteristic zero and in  \cite{donkin} in arbitrary characteristic. 

Mumford's geometric invariant theory (GIT for short) was applied in \cite[Proposition 3.1]{king} to prove that 
$\rep(\qui,\alpha)^{\theta\tiny{\text{-sst}}}$ is the complement in $\rep(\qui,\alpha)$ of the common zero locus of $\bigcup_{n=1}^\infty \semi(\qui,\alpha,\theta)_n$ for a non-zero weight $\theta$ (in particular, if the dimension vector $\alpha$ is $\theta$-semistable, then there is a non-zero relative invariant on $\rep(\qui,\alpha)$ whose weight is a positive integer multiple of $\theta$), 
and $\rep(\qui,\alpha)^{0\tiny{\text{-sst}}}=\rep(\qui,\alpha)$. It is therefore a 
Zariski open dense 
$\GL(\alpha)$-invariant subset of $\rep(\qui,\alpha)$ when $\alpha$ is $\theta$-semistable, and there is a GIT-quotient 
$\pi:\rep(\qui,\alpha)^{\theta\tiny{\text{-sst}}}\twoheadrightarrow 
\rep(\qui,\alpha)^{\theta\tiny{\text{-sst}}}/\negmedspace/\GL(\alpha)$. 
Here  $\moduli(\qui,\alpha,\theta):=\rep(\qui,\alpha)^{\theta\tiny{\text{-sst}}}/\negmedspace/\GL(\alpha)$ is the quasi-projective variety obtained as the 
projective spectrum of the graded algebra 
$\semi(\qui,\alpha,\theta)$, and is called the \emph{moduli space of $\theta$-semistable representations} of $\qui$ with dimension vector $\alpha$. 
It is a coarse moduli space for families of $\theta$-semistable $\alpha$-dimensional representations of $\qui$ up to S-equivalence (cf. \cite[Proposition 5.2]{king}, 
and see \cite{newstead} for an introduction to GIT). 
This means in particular that $x$ and $y$ in $\rep(\qui,\alpha)^{\theta\tiny{\text{-sst}}}$ 
 belong to the same fiber of 
$\pi$ if and only if the representations $x$ and $y$ have the same 
composition factors in the abelian category of $\theta$-semistable 
representations of $\qui$ (whose simple objects are the $\theta$-stable representations).  
When $\alpha$ is $\theta$-stable, $\rep(\qui,\alpha)^{\theta\tiny{\text{-st}}}$ is a 
Zariski dense 
open subset of $\rep(\qui,\alpha)$, and for $x\in  \rep(\qui,\alpha)^{\theta\tiny{\text{-st}}}$, 
the fiber $\pi^{-1}(\pi(x))$ is the 
$\GL(\alpha)$-orbit of $x$. The stabilizer of $x$ consists only of the scalar multiples of the identity element of $\GL(\alpha)$ (by Schur's Lemma). 
It follows that whenever $\alpha$ is $\theta$-stable, we have 
\begin{equation} 
\label{eq:dim of stable moduli} 
\dim \moduli(\qui,\alpha,\theta)=\dim_{\F}\rep(\qui,\alpha)-\dim \GL(\alpha)+1=
1-\langle \alpha,\alpha\rangle_{\qui}. 
\end{equation} 
Note also that when  $\qui$ is acyclic, then $\semi(\qui,\alpha,\theta)_0=\F[\rep(\qui,\alpha)]^{\GL(\alpha)}=\F$, 
hence $\moduli(\qui,\alpha,\theta)$ is a projective algebraic variety over $\F$. 

\subsection{The reductions $\tau$ and $\sigma$} \label{sec:tau sigma}

\begin{definition}\label{def:tau} 
Let $\qui$ be a quiver, $\alpha$ a sincere dimension vector for $\qui$, and $u\in \qui_0$ a vertex of $\qui$ such that $\deg_{\qui}(u)>0$ and $\qui$ has no loop at $u$ (i.e. there is no arrow $a\in \qui_1$ with $\source a=u=\target a$). 
\begin{itemize}
\item[(i)] We say that $u$ is \emph{large for $(\qui,\alpha)$} if  
\[\alpha(u)\ge \max\{\sum_{b\in \qui_1\colon \target b=u}\alpha(\source b), \ 
\sum_{c\in \qui_1\colon \source c=u}\alpha(\target c)\}.\]  
\item[(ii)] Assume that $u$ is large for $(\qui,\alpha)$. 
Then 
denote by $\tau_u(\qui,\alpha)$ the quiver-dimension vector pair  $(\tau_u\qui,\tau_u\alpha)$ obtained as follows: 
\begin{itemize} 
\item remove from $\qui$ the vertex 
$u$ and the arrows adjacent to $u$; 
\item for each pair $(b,c)\in \qui_1\times \qui_1$ with 
$\target b=u$ and $\source c=u$, put a new arrow $d$ such that $\source d=\source b$ and $\target d=\target c$; 
\item  $\tau_u\alpha:=\alpha\vert_{\qui_0\setminus \{u\}}$. 
\end{itemize} 
\end{itemize}
\end{definition} 

Note that in the special case when the large vertex $u$ is a \emph{source} (i.e. there is no arrow $a\in \qui_1$ with $\target a=u$) 
or a \emph{sink} (i.e. there is no $a\in \qui_1$ with $\source a=u$),  then 
$\tau_u(\qui,\alpha)$ is simply obtained from $(\qui,\alpha)$ by removing $u$ and all arrows adjacent to $u$.

We need to recall the \emph{reflection transformations} (in the sense of  \cite{bernstein-gelfand-ponomarev}). 

\begin{definition}\label{def:sigma} 
Let $\qui$ be a quiver, $\alpha$ a sincere dimension vector for $\qui$,
and $u\in \qui_0$ a source (respectively  a sink) in $\qui$. 
\begin{itemize} 
\item[(i)] We say that $u$ is a \emph{small source} (resp. \emph{small sink}) 
for $(\qui,\alpha)$ if 
\begin{equation*}
\label{eq:1/2sum}
\sum_{a\in \qui_1\colon \source a=u}\alpha(\target a)>\alpha(u)
\quad \mbox{ (resp. }\sum_{a\in \qui_1\colon \target a=u}\alpha(\source a)>\alpha(u)\mbox{)}.  
\end{equation*} 
\item[(ii)] Assume that $u$ is a small source (respectively sink) for $(\qui,\alpha)$ 
Then $\sigma_u(\qui,\alpha)$ is the quiver-dimension vector pair 
$(\sigma_u\qui,\sigma_u\alpha)$ obtained as follows: 
\begin{itemize} 
\item reverse all the arrows in $\qui$ adjacent to $u$; 
\item $(\sigma_u\alpha)(v)=\begin{cases} \alpha(v) &\quad\text{if}\quad v\neq u;\\
-\alpha(u)+\sum_{a\in \qui_1\colon \source a=u}\alpha(\target a)&\quad \text{if}\quad v=u\mbox{ is a source}; 
\\ -\alpha(u)+\sum_{a\in \qui_1\colon \target a=u}\alpha(\source a)&\quad \text{if}\quad v=u\mbox{ is a sink.}\end{cases}$     
 \end{itemize} 
\end{itemize}
\end{definition} 

\begin{definition} \label{def:tausigma-minimal} 
Given a class  $\mathcal{C}$ of sincere quiver-dimension vector pairs and $(\qui,\alpha)\in \mathcal{C}$, 
we say that $(\qui,\alpha)\in \mathcal{C}$ is 
\emph{not $\tau\sigma$-minimal in $\mathcal{C}$} if there exists a sequence 
$(\qui^{(i)},\alpha^{(i)})$, $i=0,1,\dots,n$ of sincere quiver-dimension vector pairs such that 
\begin{enumerate} 
\item $(\qui^{(0)},\alpha^{(0)})=(\qui,\alpha)$; 
\item $(\qui^{(n)},\alpha^{(n)})\in \mathcal{C}$; 
\item For each $i=0,1,\dots,n-1$ we have 
$(\qui^{(i+1)},\alpha^{(i+1)})=\tau(\qui^{(i)},\alpha^{(i)})$, where 
$\tau=\tau_{u_i}$ for some large vertex $u_i$ for $(\qui^{(i)},\alpha^{(i)})$, or 
$\tau=\sigma_{u_i}$ for some small source or sink $u_i$ for $(\qui^{(i)},\alpha^{(i)})$; 
\item $|\qui^{(n)}_0|<|\qui_0|$, or  $|\qui^{(n)}_0|=|\qui_0|$ and $|\alpha^{(n)}|<|\alpha|$. 
\end{enumerate} 
Otherwise we say that $(\qui,\alpha)$ is \emph{$\tau\sigma$-minimal}. 
\end{definition} 

\begin{example} 
The picture below indicates three quiver-dimension vector pairs. 
The second is obtained from the first by applying $\sigma_u$. 
The third is obtained from the second by applying $\tau_w$. 
This shows that the first is not $\tau\sigma$-minimal, since the third quiver has one less vertices (and also the total dimension drops). Note that to get to the third, first we had to apply $\sigma_u$, which had increased the total dimension, and did not decrease the number of vertices. 
\end{example} 

\begin{center}
\begin{tikzpicture}[>=open triangle 45,scale=0.7]
\foreach \x in {(0,0.7),(0,-0.7),(1,0),(2.3,0)} \filldraw \x circle (2pt); 
\node [left] at (0,0.7) {$1$};
\node [left] at (0,-0.7) {$2$};
\node [above] at (1,0) {$5$};
\node [below] at (1,0) {$w$};
\node [below] at (0,-0.7) {$u$};
\node [above] at (2.3,0) {$4$};
\draw [->,thick ] (0,0.7) to (1,0); 
\draw [->,thick ] (1,0) to (0,-0.7); 
\draw [->,thick ] (1,0) to (2.3,0); 
\draw [->,thick ] (2.3,0) to (3.3,0.7); 
\draw [->,thick ] (2.3,0) to (3.3,0.3); 
\draw [->,thick ] (3.3,-0.7) to (2.3,0); 
\node at (3.3,0) {$\vdots$};
\end{tikzpicture}\qquad
\begin{tikzpicture}[>=open triangle 45,scale=0.7]
\foreach \x in {(0,0.7),(0,-0.7),(1,0),(2.3,0)} \filldraw \x circle (2pt); 
\node [left] at (0,0.7) {$1$};
\node [left] at (0,-0.7) {$3$};
\node [above] at (1,0) {$5$};
\node [below] at (1,0) {$w$};
\node [below] at (0,-0.7) {$u$};
\node [above] at (2.3,0) {$4$};
\draw [->,thick ] (0,0.7) to (1,0); 
\draw [->,thick ] (0,-0.7) to (1,0); 
\draw [->,thick ] (1,0) to (2.3,0); 
\draw [->,thick ] (2.3,0) to (3.3,0.7); 
\draw [->,thick ] (2.3,0) to (3.3,0.3); 
\draw [->,thick ] (3.3,-0.7) to (2.3,0); 
\node at (3.3,0) {$\vdots$};
\end{tikzpicture}\qquad 
\begin{tikzpicture}[>=open triangle 45,scale=0.7]
\foreach \x in {(0,0.7),(0,-0.7),(2,0)} \filldraw \x circle (2pt); 
\node [left] at (0,0.7) {$1$};
\node [left] at (0,-0.7) {$3$};
\node [below] at (0,-0.7) {$u$};
\node [above] at (2,0) {$4$};
\draw [->,thick ] (0,0.7) to (2,0); 
\draw [->,thick ] (0,-0.7) to (2,0); 
\draw [->,thick ] (2,0) to (3,0.7); 
\draw [->,thick ] (2,0) to (3,0.3); 
\draw [->,thick ] (3,-0.7) to (2,0); 
\node at (3,0) {$\vdots$};
\end{tikzpicture}
\end{center}

\subsection{Statements of the main results} \label{sec:main results}

\begin{theorem}\label{thm:main} 
Suppose that $(\qui',\alpha')=\tau(\qui,\alpha)$, 
where $(\qui,\alpha)$ is a sincere quiver-dimension vector pair, 
and $\tau=\tau_u$, where $u$ is a large vertex for $(\qui,\alpha)$, or $\tau=\sigma_u$, 
where $u$ is a small source or small sink for $(\qui,\alpha)$. 
Then for any weight $\theta\in \mathbb{Z}^{\qui_0}$, 
\begin{itemize}
\item[(i)] there exists a weight $\theta'\in \mathbb{Z}^{\qui_0}$ 
such that the algebraic varieties   
$\moduli(\qui,\alpha,\theta)$ and  $\moduli(\qui',\alpha',\theta')$ are isomorphic. 
\item[(ii)] if $\alpha$ is $\theta$-stable, then there exists a weight 
$\theta'\in \mathbb{Z}^{\qui'_0}$ such that  $\alpha'$ 
is $\theta'$-stable, and 
$\moduli(\qui,\alpha,\theta)$ and  $\moduli(\qui',\alpha',\theta')$ are isomorphic. 
\end{itemize}  
\end{theorem}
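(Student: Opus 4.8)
The plan is to handle the two reduction types ($\tau_u$ and $\sigma_u$) separately, since they have quite different flavors. The core of both parts is to establish an isomorphism of graded algebras of relative invariants, $\semi(\qui,\alpha,\theta)\cong\semi(\qui',\alpha',\theta')$ for a suitable $\theta'$; taking $\mathrm{Proj}$ then yields the isomorphism of moduli spaces in part (i). For part (ii), I would additionally track stability through the construction.

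**The $\sigma_u$ case (reflection).** Here $u$ is a small source (say); the construction reverses the arrows at $u$ and replaces $\alpha(u)$ by $-\alpha(u)+\sum_{a:\source a=u}\alpha(\target a)$. First I would note that the Bernstein--Gelfand--Ponomarev reflection functor (together with King's description of $\theta$-stability as a numerical condition on subrepresentations) is the natural tool. The smallness hypothesis guarantees $(\sigma_u\alpha)(u)>0$, so $\sigma_u\alpha$ is again a genuine sincere dimension vector. I would define $\theta'$ by keeping $\theta(v)$ for $v\ne u$ and setting $\theta'(u)$ so that $\theta'$ annihilates $\sigma_u\alpha$ exactly when $\theta$ annihilates $\alpha$, and so that the weighted-subspace inequalities defining $(\theta')$-(semi)stability for a reflected representation match those for the original. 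The classical fact that reflection functors induce an equivalence between the (semi)stable categories at reflected weights should give a bijection on closed orbits, which one upgrades to an isomorphism of the GIT quotients; alternatively one produces the isomorphism directly at the level of semi-invariant algebras. Stability is preserved because the reflection functor sends $\theta$-stable objects to $\theta'$-stable ones.

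**The $\tau_u$ case (composing arrows at a large vertex).** This is where I expect the real work, and the phrase ``with the aid of classical invariant theory'' in the introduction signals the intended route. The vertex $u$ has no loop and is large, meaning $\alpha(u)$ dominates both the total incoming and total outgoing dimension. Locally at $u$ a representation contributes a pair of maps $\bigoplus_b \F^{\alpha(\source b)}\xrightarrow{\ B\ }\F^{\alpha(u)}\xrightarrow{\ C\ }\bigoplus_c\F^{\alpha(\target c)}$, and the new quiver $\tau_u\qui$ records only the composites $CB$, i.e.\ the block matrix $(C_cB_b)_{c,b}$. The plan is to show that the algebra of relative invariants for $(\qui,\alpha)$ is isomorphic to that for $(\qui',\alpha')$: one passes to the $\GL(\alpha(u))$-invariants of the factor at $u$ first, and the first fundamental theorem of invariant theory for $\GL_n$ acting on pairs of rectangular matrices (the map $(B,C)\mapsto CB$ having image the matrices of rank $\le\alpha(u)$) identifies the invariants generated by the composites; the largeness of $\alpha(u)$ is exactly the condition ensuring no rank constraint is imposed, so that $(B,C)\mapsto CB$ is surjective onto all of $\rep$ of the relevant block and the induced map on relative-invariant algebras is an isomorphism. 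I would make $\theta'$ the restriction $\theta\vert_{\qui'_0}$ adjusted at the endpoints so that the weight of the composite arrow matches. The largeness hypothesis is used twice: to guarantee surjectivity of the ``multiply the matrices'' map, and to ensure the fibers are connected so that the algebra map is injective as well.

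**The main obstacle** will be part (ii) for $\tau_u$: transporting $\theta$-stability through the arrow-composition. An isomorphism of semi-invariant algebras shows the semistable moduli spaces agree, but stability is about \emph{stable} locus being nonempty and requires a direct argument, not merely $\mathrm{Proj}$-level information. I would argue geometrically: given a $\theta$-stable $x$ for $(\qui,\alpha)$, I would produce an $(\qui',\alpha')$-representation whose composite maps are the $C_cB_b$, verify its $\theta'$-semistability from $x$'s semistability, and then check that any destabilizing subrepresentation of the image would lift to one of $x$, contradicting stability; largeness is what allows the lift. One must also confirm the newly defined $\theta'$ genuinely annihilates $\alpha'$, and that the stable locus maps into the stable locus rather than merely the semistable one. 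Verifying these numerical compatibilities between $\theta$ and $\theta'$ at the endpoint vertices, in a way uniform across both the source/sink special cases and the general large-vertex case, is the delicate bookkeeping I would be most careful about.
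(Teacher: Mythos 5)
Your overall strategy is the paper's: both reductions are proved by exhibiting an isomorphism of graded semi-invariant algebras $\semi(\qui,\alpha,\theta)\cong\semi(\qui',\alpha',\theta')$ via classical invariant theory and then taking $\mathrm{Proj}$, and stability is transported by lifting destabilizing subrepresentations of $\mu(x)$ (resp.\ of the reflected representation) back to subrepresentations of a generic $x$. However, two of your concrete steps would fail as stated. First, in the $\tau_u$ case you propose to ``pass to the $\GL(\alpha(u))$-invariants of the factor at $u$ first'' and invoke only the FFT for $\GL_n$ on $(B,C)\mapsto CB$. But when $\theta(u)\neq 0$ a relative invariant of weight $n\theta$ is \emph{not} $\GL(\F^{\alpha(u)})$-invariant, so this does not capture $\semi(\qui,\alpha,\theta)$. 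The paper instead observes that semi-invariants lie in $\F[\rep(\qui,\alpha)]^{\SL(\F^{\alpha(u)})}$, applies the FFT for $\SL$ (whose extra generators are the determinants $\det(b)$, $\det(c)$ of the assembled block matrices), and deduces a trichotomy: $\theta(u)>0$ forces $\alpha(u)=\sum_{\target b=u}\alpha(\source b)$, $\theta(u)<0$ forces the dual equality, and every weight-$n\theta$ semi-invariant is $\det(b)^{n\theta(u)}$ (or $\det(c)^{-n\theta(u)}$) times a $\GL(\F^{\alpha(u)})$-invariant. This determinant twist is what produces the correct $\theta'$ and also handles the degenerate case $\tau_u\theta=0\neq\theta$, where the algebra is a univariate polynomial ring over the invariants; none of this appears in your sketch.

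Second, your weight prescription in the $\sigma_u$ case --- keep $\theta(v)$ for $v\neq u$ and adjust only $\theta'(u)$ --- cannot work: requiring $\theta'\cdot\sigma_u\alpha=0$ would force $\theta'(u)=\theta(u)\alpha(u)/(\sigma_u\alpha)(u)$, which is generally not an integer, and the subrepresentation inequalities of a reflected representation involve the neighbours of $u$, not just $u$. The correct weight has $(\sigma_u\theta)(u)=-\theta(u)$ and shifts $\theta(v)$ at each neighbour $v$ by $\theta(u)$ times the number of arrows between $v$ and $u$. Relatedly, the paper does not use the BGP reflection functor as an equivalence of (semi)stable categories (which is delicate to make algebraic over all of $\rep(\qui,\alpha)$); it constructs an explicit $\SL(\F^n)$-equivariant isomorphism between $\F[\mathrm{Hom}_\F(\F^n,\F^m)]^{\SL(\F^m)}$ and $\F[\mathrm{Hom}_\F(\F^{n-m},\F^n)]^{\SL(\F^{n-m})}$ through Pl\"ucker coordinates, and tracks the central character to identify the weight --- essentially your fallback ``alternatively \dots at the level of semi-invariant algebras,'' which is the route you would need to carry out. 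Finally, for part (ii) your lifting argument needs the input that non-stability of the dimension vector $\alpha'$ yields a \emph{fixed} dimension vector $\beta$ such that every $\alpha'$-dimensional representation has a subrepresentation of dimension $\beta$ (Schofield: the locus $Z_\beta$ is closed), so that the argument can be run on a representation that is simultaneously generic and $\theta$-stable; without naming this ingredient the genericity bookkeeping you flag does not close.
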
 

Theorem~\ref{thm:main} immediately follows from  
the invariant theoretic Lemma~\ref{lemma:shrink}, Lemma~\ref{lemma:reflection}  (proved  in Section~\ref{sec:invariant theory}, where the weight $\theta'$ is given explicitly) and Lemma~\ref{lemma:stable preserved} (proved in Section~\ref{sec:stable}).  Note that part (i) for the operation $\sigma_u$ is a corollary of a statement from \cite{skowronski-weyman}.  

Let us record the following general consequences (their proof is explained in detail in Section~\ref{sec:moduli}) of Theorem~\ref{thm:main} and formula 
\eqref{eq:dim of stable moduli}, 
which will furnish the logical structure of our finiteness results on quiver moduli spaces 
with certain restrictions on the dimension vector.   

\begin{corollary}\label{cor:main} 
Let $\mathcal{C}$ be a set of sincere quiver-dimension vector pairs.  
Suppose that the variety $X$ can be realized as $\moduli(\qui,\alpha,\theta)$, where $(\qui,\alpha)\in \mathcal{C}$ and  $\alpha$ is $\theta$-stable.  
Then $(\qui,\alpha)$ above can be chosen to be also $\tau\sigma$-minimal in $\mathcal{C}$. 
\end{corollary}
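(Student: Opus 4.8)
The plan is to argue by well-founded descent. To the class $\mathcal{C}$ I attach the lexicographic order on $\mathbb{N}^2$ that compares a pair $(\qui,\alpha)$ via the value $(|\qui_0|,|\alpha|)$; this is exactly the order whose strict decrease is encoded by condition (4) in Definition~\ref{def:tausigma-minimal}, and it is a well-order on $\mathbb{N}^2$. Consider the set $S$ of all sincere pairs $(\qui,\alpha)\in\mathcal{C}$ for which there is a weight $\theta$ with $\alpha$ being $\theta$-stable and $\moduli(\qui,\alpha,\theta)\cong X$. By hypothesis $S\neq\emptyset$, so by well-foundedness I may choose $(\qui,\alpha)\in S$ minimizing $(|\qui_0|,|\alpha|)$ in the lexicographic order. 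I claim this chosen pair is $\tau\sigma$-minimal in $\mathcal{C}$, which proves the corollary.

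Suppose to the contrary that it is not $\tau\sigma$-minimal. Then Definition~\ref{def:tausigma-minimal} furnishes a chain of sincere pairs $(\qui^{(i)},\alpha^{(i)})$, $i=0,\dots,n$, with $(\qui^{(0)},\alpha^{(0)})=(\qui,\alpha)$, with $(\qui^{(n)},\alpha^{(n)})\in\mathcal{C}$ strictly smaller than $(\qui,\alpha)$ in the lexicographic order, and with each step given by $\tau_{u_i}$ at a large vertex or $\sigma_{u_i}$ at a small source or sink. Here I would feed this chain into Theorem~\ref{thm:main}(ii) one step at a time: setting $\theta^{(0)}=\theta$, the $\theta^{(0)}$-stability of $\alpha^{(0)}$ together with part (ii) of the theorem produces a weight $\theta^{(1)}$ for which $\alpha^{(1)}$ is $\theta^{(1)}$-stable and $\moduli(\qui^{(0)},\alpha^{(0)},\theta^{(0)})\cong\moduli(\qui^{(1)},\alpha^{(1)},\theta^{(1)})$. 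Iterating along the chain yields a weight $\theta^{(n)}$ with $\alpha^{(n)}$ being $\theta^{(n)}$-stable and $X\cong\moduli(\qui^{(n)},\alpha^{(n)},\theta^{(n)})$. Thus $(\qui^{(n)},\alpha^{(n)})\in S$, contradicting the minimality of $(\qui,\alpha)$; hence the chosen pair is $\tau\sigma$-minimal.

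The genuine mathematical content sits entirely in Theorem~\ref{thm:main}; once that is available the present statement is a formal descent argument, so I do not expect a serious obstacle. The two points requiring care are that each intermediate pair in the chain is sincere, so that Theorem~\ref{thm:main} applies at every step, which is already built into Definition~\ref{def:tausigma-minimal}, and that stability is transported forward along the whole chain. It is precisely the preservation of stability that makes part (ii), rather than part (i), of Theorem~\ref{thm:main} indispensable here: part (i) alone would give only the isomorphism of moduli spaces and would not certify that the terminal pair again realizes $X$ through a $\theta^{(n)}$-stable dimension vector, hence would not place $(\qui^{(n)},\alpha^{(n)})$ in $S$ and would break the contradiction.
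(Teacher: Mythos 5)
Your proposal is correct and follows essentially the same route as the paper: both arguments are a well-founded descent based on condition (4) of Definition~\ref{def:tausigma-minimal} (the paper phrases it via a reachability order $\prec$ satisfying the descending chain condition, you via the equivalent lexicographic order on $(|\qui_0|,|\alpha|)$), combined with a repeated application of Theorem~\ref{thm:main}(ii) to transport the stable weight and the isomorphism of moduli spaces along the chain. Your closing remark that part (ii) rather than part (i) is indispensable is exactly the point the paper relies on as well.
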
 

\begin{corollary} \label{cor:sketch} 
Let $\mathcal{C}$ be a set of sincere quiver-dimension vector pairs.  
Suppose that for some positive integer $d$, there are only finitely many 
$(\qui,\alpha)\in \mathcal{C}$ that are $\tau\sigma$-minimal in $\mathcal{C}$, 
and $1-\langle \alpha,\alpha\rangle_{\qui}=d$. 
Then up to isomorphism there are only finitely many $d$-dimensional algebraic varieties $X$ such that $X\cong \moduli(\qui,\alpha,\theta)$, where $(\qui,\alpha)\in \mathcal{C}$ 
and $\alpha$ is $\theta$-stable.  
\end{corollary}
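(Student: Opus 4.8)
The plan is to assemble three ingredients that are already available: the dimension formula \eqref{eq:dim of stable moduli}, the reduction to a $\tau\sigma$-minimal pair furnished by Corollary~\ref{cor:main}, and the observation recorded in the introduction that a single quiver-dimension vector pair gives rise to only finitely many moduli spaces as the weight varies. The argument is therefore essentially a matter of bookkeeping. First I would take an arbitrary $d$-dimensional variety $X$ of the asserted form, say $X\cong \moduli(\qui,\alpha,\theta)$ with $(\qui,\alpha)\in\mathcal{C}$ and $\alpha$ being $\theta$-stable. Applying Corollary~\ref{cor:main}, I replace this realization by one in which $(\qui,\alpha)\in\mathcal{C}$ is moreover $\tau\sigma$-minimal in $\mathcal{C}$, while $\alpha$ is still $\theta$-stable and $\moduli(\qui,\alpha,\theta)\cong X$.

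Next, because $\alpha$ is $\theta$-stable, formula \eqref{eq:dim of stable moduli} yields $d=\dim X=1-\langle\alpha,\alpha\rangle_{\qui}$. Thus every $X$ as above is isomorphic to $\moduli(\qui,\alpha,\theta)$ for some $\tau\sigma$-minimal pair $(\qui,\alpha)\in\mathcal{C}$ satisfying the constraint $1-\langle\alpha,\alpha\rangle_{\qui}=d$. By the hypothesis of the corollary there are only finitely many such pairs; denote this finite set by $\mathcal{F}$ (recall that quiver-dimension vector pairs are identified up to isomorphism, so $\mathcal{F}$ is a finite set of isomorphism classes).

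Finally, for each fixed pair in $\mathcal{F}$ there are, as recalled in the introduction, only finitely many GIT-equivalence classes of weights $\theta\in\mathbb{Z}^{\qui_0}$, hence only finitely many isomorphism classes among the varieties $\moduli(\qui,\alpha,\theta)$. Since every $X$ in question is isomorphic to $\moduli(\qui,\alpha,\theta)$ for one of the finitely many pairs of $\mathcal{F}$ together with some weight, the number of isomorphism classes of such $X$ is bounded by the finite total obtained by summing, over $\mathcal{F}$, the number of moduli spaces attached to each pair. This gives the claim.

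The bulk of the genuine difficulty sits in the invoked results — Theorem~\ref{thm:main} and hence Corollary~\ref{cor:main}, together with the dimension formula \eqref{eq:dim of stable moduli} — so I expect no serious obstacle in the present argument beyond getting the logical order right. The point requiring care is that one must first pass, via Corollary~\ref{cor:main}, to a $\tau\sigma$-minimal realization of $X$, and only afterwards invoke \eqref{eq:dim of stable moduli} to certify that this particular realization meets the dimension constraint $1-\langle\alpha,\alpha\rangle_{\qui}=d$, so that the finiteness hypothesis on $\mathcal{C}$ becomes applicable to it.
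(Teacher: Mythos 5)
Your argument is correct and coincides with the paper's own proof: reduce to a $\tau\sigma$-minimal pair via Corollary~\ref{cor:main}, use \eqref{eq:dim of stable moduli} to see that this pair lies in the finite set guaranteed by the hypothesis, and then invoke the finiteness of moduli spaces for a fixed $(\qui,\alpha)$ as the weight varies (which the paper isolates as Lemma~\ref{lemma:fixed dim, varying weight}, with exactly the GIT-equivalence argument you cite from the introduction). The ordering issue you flag is harmless, since \eqref{eq:dim of stable moduli} applies equally to the original and the $\tau\sigma$-minimal realization.
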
 

Theorem~\ref{thm:main} implies that  in order to classify moduli spaces of quiver representations, it is sufficient to take into account $\tau\sigma$-minimal quiver-dimension vector pairs only.  
Moreover, we need to consider only weights $\theta$ such that the dimension vector $\alpha$ is $\theta$-stable, thanks to 
Proposition~\ref{prop:product} below, which can be easily derived from results of 
\cite{chindris}, \cite{derksen-weyman}, and known facts on tame quivers 
(see Section~\ref{sec:products}).  

\begin{proposition}\label{prop:product} 
A projective algebraic variety $X$ occurs as a moduli space of representations of a quiver if and only if 
$X\cong X_1\times \cdots \times X_n$, where 
$X_i\cong \mathbb{P}^{d_i}$ (the $d_i$-dimensional projective space), or $X_i$ is isomorphic to a moduli space 
$\moduli(\qui^{(i)},\alpha^{(i)},\theta^{(i)})$ where $\qui^{(i)}$ is a connected wild acyclic 
quiver, and $\alpha^{(i)}$ is a sincere $\theta^{(i)}$-stable dimension vector for 
$\qui^{(i)}$.    
\end{proposition}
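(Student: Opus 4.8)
The plan is to prove both implications of the equivalence, with the ``if'' direction being elementary and the ``only if'' direction resting on the $\theta$-stable decomposition together with a product formula for quiver moduli spaces.

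For the ``if'' direction, I would first record two closure properties. If $\qui=\qui^{(1)}\sqcup\cdots\sqcup\qui^{(n)}$ is a disjoint union of quivers and $\alpha,\theta$ restrict to $\alpha^{(i)},\theta^{(i)}$ on the components, then $\rep(\qui,\alpha)=\prod_i\rep(\qui^{(i)},\alpha^{(i)})$, while $\GL(\alpha)$ and $\chi_\theta$ factor accordingly, so that $\moduli(\qui,\alpha,\theta)\cong\prod_i\moduli(\qui^{(i)},\alpha^{(i)},\theta^{(i)})$; hence the class of projective quiver moduli spaces is closed under finite products. Second, $\mathbb{P}^d$ is itself such a moduli space: for the generalized Kronecker quiver $K_{d+1}$ with two vertices and $d+1$ parallel arrows, dimension vector $(1,1)$ and weight $(-1,1)$, a short computation identifies $\moduli(K_{d+1},(1,1),(-1,1))\cong\mathbb{P}^d$ with $(1,1)$ being $\theta$-stable. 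Combining the two facts, any product of projective spaces $\mathbb{P}^{d_i}$ and wild moduli spaces of the stated form is realized as the moduli space of the disjoint union of the realizing quivers.

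For the ``only if'' direction, suppose $X=\moduli(\qui,\alpha,\theta)$ is projective; replacing $\qui$ by $\mathrm{supp}(\alpha)$ I may assume $\alpha$ is sincere. I would first observe that projectivity forces $\qui$ to be acyclic: since $X=\mathrm{Proj}\,\semi(\qui,\alpha,\theta)$ is projective over $\mathrm{Spec}\,\semi(\qui,\alpha,\theta)_0=\mathrm{Spec}\,\F[\rep(\qui,\alpha)]^{\GL(\alpha)}$, properness over $\F$ requires this invariant ring to be trivial, and by the description of $\GL(\alpha)$-invariants through traces along oriented cycles this happens exactly when $\mathrm{supp}(\alpha)=\qui$ has no oriented cycle. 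Next I would invoke the $\theta$-stable decomposition of Derksen--Weyman: the dimension vectors of the $\theta$-stable factors of a general $\theta$-semistable representation of dimension $\alpha$ yield a decomposition $\alpha=\gamma_1\oplus\cdots\oplus\gamma_t$ into $\theta$-stable summands. The product formula derived from the results of Chindris then gives $\moduli(\qui,\alpha,\theta)\cong\prod_{\gamma}\moduli(\qui,m_\gamma\gamma,\theta)$, the product running over the distinct values $\gamma$ occurring with multiplicity $m_\gamma$.

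It then remains to identify each factor $\moduli(\qui,m_\gamma\gamma,\theta)$ according to the representation type of the connected acyclic support $\qui_\gamma:=\mathrm{supp}(\gamma)$ of the Schur root $\gamma$. If $\qui_\gamma$ is Dynkin, then $\gamma$ is a real root and the general polystable representation of $m_\gamma\gamma$ is $S^{\oplus m_\gamma}$ for the unique exceptional representation $S$ of dimension $\gamma$, so the factor is a point $\mathbb{P}^0$. If $\qui_\gamma$ is extended Dynkin, then either $\gamma$ is a real root (again a point) or $\gamma=\delta$ is the null root; in the latter case $\langle\gamma,\gamma\rangle_{\qui}=0$ gives $\moduli(\qui,\delta,\theta)\cong\mathbb{P}^1$ by the structure theory of tame quivers, and the general polystable representation of $m_\gamma\delta$ is an unordered $m_\gamma$-tuple of pairwise non-isomorphic regular simples, whence $\moduli(\qui,m_\gamma\delta,\theta)\cong\mathrm{Sym}^{m_\gamma}\mathbb{P}^1\cong\mathbb{P}^{m_\gamma}$. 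Finally, if $\qui_\gamma$ is wild, then $\gamma$ is either a real root (contributing $\mathbb{P}^0$) or an anisotropic imaginary Schur root; in the latter case $\mathrm{ext}(\gamma,\gamma)=1-\langle\gamma,\gamma\rangle_{\qui}>0$ forces $m_\gamma=1$, and $\moduli(\qui_\gamma,\gamma,\theta)$ is precisely a moduli space of the second type, with $\qui_\gamma$ connected wild acyclic and $\gamma$ a sincere $\theta$-stable dimension vector on it. Discarding the trivial $\mathbb{P}^0$ factors yields the asserted decomposition.

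The main obstacle I anticipate is the product formula together with the bookkeeping of multiplicities: one must argue that a $\theta$-stable summand $\gamma$ can repeat ($m_\gamma>1$) only when its generic self-extension vanishes, i.e. only for real or isotropic $\gamma$, so that every genuinely higher-dimensional wild factor occurs with multiplicity one, while the only positive-dimensional projective-space factors arise from the isotropic (tame) summands via $\mathrm{Sym}^{m}\mathbb{P}^1\cong\mathbb{P}^{m}$. Verifying these two points by combining the cited results of Chindris and Derksen--Weyman with the classical representation theory of tame quivers is where the real content lies; the remaining steps are then routine.
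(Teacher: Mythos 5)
Your proof is correct and follows essentially the same route as the paper: acyclicity of $\mathrm{supp}(\alpha)$ forced by projectivity, the Derksen--Weyman $\theta$-stable decomposition combined with Chindris's symmetric-power product formula, and the case analysis by root type (real roots give points, isotropic roots over extended Dynkin supports give $\mathrm{S}^{m}(\mathbb{P}^1)\cong\mathbb{P}^{m}$, and anisotropic imaginary roots have wild connected support with multiplicity forced to be $1$ by \cite[Proposition 3.15(b)]{derksen-weyman}). The only cosmetic difference is in the ``if'' direction, where you realize $\mathbb{P}^{d}$ as the moduli space of thin representations of the generalized Kronecker quiver with $d+1$ arrows, whereas the paper uses the two-arrow Kronecker quiver with dimension vector $(d,d)$ and weight $(-1,1)$; both realizations are standard and equally valid.
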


\begin{remark}\label{remark:schur root} 
Note that if $\alpha$ is a $\theta$-stable dimension vector for some quiver $\qui$, 
then the generic representation of dimension vector $\alpha$ is indecomposable, so $\alpha$ is a 
so-called \emph{Schur root}. 
\end{remark} 

In particular, if $\alpha$ is a $\theta$-stable dimension vector, then $\alpha$ is a root. 
According to the classification of roots in \cite{kac}, the set of imaginary roots for $\qui$ is the union of the Weyl group orbits of the elements of the fundamental set $F_{\ugqui}$. 
Moreover, the elements of $F_{\qui}$ for wild $\qui$ are Schur roots (see \cite[Therorem 3.3]{kraft-riedtmann}). 
Therefore it seems natural to focus first on the $\theta$-stable dimension vectors in 
$F_{\ugqui}$ as we do in Theorem~\ref{thm:fundamental domain} below 
(which deals with not necessarily projective moduli spaces and quivers that may 
have oriented cycles). 

\begin{theorem} \label{thm:fundamental domain} 
For any positive integer $d$, up to isomorphism there are only finitely many algebraic varieties $X$ of dimension at most $d$ such that there exists a quiver $\qui$, 
a weight $\theta\in \mathbb{Z}^{\qui_0}$, a $\theta$-stable dimension vector $\alpha$ in the 
fundamental set $F_{\ugqui}$ with $X\cong \moduli(\qui,\alpha,\theta)$. 
\end{theorem}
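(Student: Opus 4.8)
The plan is to deduce the statement from Corollary~\ref{cor:sketch}, after setting up the right class $\mathcal{C}$ and establishing a combinatorial bound on $\tau\sigma$-minimal pairs. First I would normalise. Since $\moduli(\qui,\alpha,\theta)$ and the notion of $\theta$-stability depend only on $\mathrm{supp}(\alpha)$, and since for $\alpha\in F_{\ugqui}$ the support is connected and the inequalities $(\alpha,\varepsilon_v)_{\ugqui}\le 0$ survive restriction to $\mathrm{supp}(\alpha)$, I may assume that $\alpha$ is sincere and $\qui=\mathrm{supp}(\alpha)$ is connected. If $\qui$ is Dynkin then $F_{\ugqui}=\emptyset$ and there is nothing to prove, so I let $\mathcal{C}$ be the class of all sincere pairs $(\qui,\alpha)$ with $\qui$ connected non-Dynkin and $\alpha\in F_{\ugqui}$. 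Because $\alpha\in F_{\ugqui}$ forces $\langle\alpha,\alpha\rangle_{\qui}=\tfrac12\sum_{v}\alpha(v)(\alpha,\varepsilon_v)_{\ugqui}\le 0$, formula~\eqref{eq:dim of stable moduli} gives $\dim\moduli(\qui,\alpha,\theta)=1-\langle\alpha,\alpha\rangle_{\qui}\ge 1$. Hence the condition $\dim X\le d$ means $1-\langle\alpha,\alpha\rangle_{\qui}=d'$ for some $d'\in\{1,\dots,d\}$, and since a finite union of finite sets is finite, it suffices to fix such a $d'$.

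Next I would bring in the reduction theory. By Corollary~\ref{cor:main}, any variety $X\cong\moduli(\qui,\alpha,\theta)$ with $(\qui,\alpha)\in\mathcal{C}$ and $\alpha$ $\theta$-stable is already realised by a pair that is $\tau\sigma$-minimal in $\mathcal{C}$; and since each reduction step preserves the moduli space together with $\theta$-stability by Theorem~\ref{thm:main}, the invariant $1-\langle\alpha,\alpha\rangle_{\qui}=d'$ stays constant along reduction chains. By Corollary~\ref{cor:sketch} it therefore suffices to prove that for each $d'$ there are only finitely many pairs $(\qui,\alpha)\in\mathcal{C}$ that are $\tau\sigma$-minimal in $\mathcal{C}$ and satisfy $1-\langle\alpha,\alpha\rangle_{\qui}=d'$.

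This last finiteness is exactly what I would extract from the combinatorial Theorem~\ref{thm:bounding the quiver}: a $\tau\sigma$-minimal sincere pair with $\alpha\in F_{\ugqui}$ has both $|\qui_0|$ and $|\alpha|$ bounded in terms of $1-\langle\alpha,\alpha\rangle_{\qui}$. Granting such a bound, the number of arrows is bounded as well, for when $\alpha$ is sincere each product $\alpha(\source a)\alpha(\target a)\ge 1$, so $|\qui_1|\le\sum_{v}\alpha(v)^2-\langle\alpha,\alpha\rangle_{\qui}$ is controlled; thus up to isomorphism only finitely many pairs $(\qui,\alpha)$ survive. Finally, for each fixed $(\qui,\alpha)$ there are only finitely many GIT-equivalence classes of weights $\theta$, hence finitely many varieties $\moduli(\qui,\alpha,\theta)$, as already recorded in the introduction; combining these counts proves the theorem.

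The main obstacle is Theorem~\ref{thm:bounding the quiver}, i.e.\ showing that $\tau\sigma$-minimality forces the pair to be small. The strategy I would pursue rests on the identity $\sum_{v}\alpha(v)\bigl(-(\alpha,\varepsilon_v)_{\ugqui}\bigr)=2(d'-1)$, which caps at $2(d'-1)$ both the number of \emph{strict} vertices, where $(\alpha,\varepsilon_v)_{\ugqui}<0$, and the total of the values $\alpha(v)$ there. The genuine difficulty is the \emph{null} vertices, where $(\alpha,\varepsilon_v)_{\ugqui}=0$: these can be arbitrarily numerous, as for the equioriented cycle $\tilde A_{n}$ with $\alpha\equiv 1$, so I must show they are always removable inside $\mathcal{C}$. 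Here I would use that a large vertex of an $F_{\ugqui}$-pair is automatically null, that $\sigma_u$ at a null vertex fixes $\alpha$ and preserves membership in $F_{\ugqui}$ (merely reorienting the arrows at $u$), and that such reorientations can expose a large null vertex at which $\tau_u$ strictly decreases $|\qui_0|$ and $|\alpha|$ while staying in $F_{\ugqui}$ with connected support. A pair with too many null vertices, or with $\alpha$ too large, would then admit a size-decreasing reduction chain in $\mathcal{C}$, contradicting minimality. Verifying carefully that these reductions preserve $F_{\ugqui}$-membership and connectivity, and that the size genuinely drops, is the technical heart of the argument.
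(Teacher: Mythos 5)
Your overall route coincides with the paper's: normalise to a sincere connected pair, use Corollary~\ref{cor:main} and Corollary~\ref{cor:sketch} to reduce to $\tau\sigma$-minimal pairs with fixed $1-\langle\alpha,\alpha\rangle_{\qui}$, bound those via Theorem~\ref{thm:bounding the quiver}, and finish with the finiteness of GIT-equivalence classes of weights. The one genuine gap is your choice of $\mathcal{C}$: you take all sincere connected \emph{non-Dynkin} pairs with $\alpha\in F_{\ugqui}$, but Theorem~\ref{thm:bounding the quiver} is stated and proved only for \emph{wild} connected quivers, and it cannot hold for extended Dynkin ones. Indeed, for an extended Dynkin quiver $F_{\ugqui}$ consists of the multiples $n\delta$ with $\langle n\delta,n\delta\rangle_{\qui}=0$, so $d'=1$ and the bound of Theorem~\ref{thm:bounding the quiver} would read $|\qui_0|\le 0$; moreover the extended Dynkin pairs in your $\mathcal{C}$ form an infinite family (in both the quiver and the multiple $n$), so the finiteness of $\tau\sigma$-minimal elements with $d'=1$ is not something you can ``extract'' from that theorem --- it would need a separate argument that every such pair admits a size-decreasing reduction. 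The paper sidesteps this: it splits off the extended Dynkin case and quotes the known fact (\cite{domokos:gmj}) that those moduli spaces are projective or affine spaces, hence lie in a finite list in each dimension, and only the wild case is fed into Corollary~\ref{cor:sketch}. Your proof becomes correct once you make the same split (or restrict $\mathcal{C}$ to wild quivers and handle the tame case by the classification).

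A secondary remark: your closing sketch of how you would prove Theorem~\ref{thm:bounding the quiver} (reorienting at ``null'' vertices until a large vertex is exposed) is not the paper's argument and, as written, is too vague to certify --- Section~\ref{sec:example} shows that such $\sigma$-reorientation processes can cycle forever without ever producing a large vertex, so the claim that many null vertices force a size-decreasing chain needs a real proof. The paper instead shows that $\ugqui^{\alpha,+}$ decomposes into boundedly many Dynkin pieces, that $\alpha$ is an arithmetic progression along $A_n$-chains, and that a sufficiently long constant chain always yields a reduction inside $\mathcal{C}$ (Lemmas~\ref{lemma:A_4 subgraph} and~\ref{lemma:shrink in F_Q}). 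Since the statement under review is Theorem~\ref{thm:fundamental domain} and you are entitled to cite Theorem~\ref{thm:bounding the quiver} as a black box, this does not by itself invalidate your argument, but the cited theorem's wildness hypothesis must match the class you apply it to.
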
 

Theorem~\ref{thm:fundamental domain} is concluded in  Section~\ref{sec:moduli} 
from  Corollary~\ref{cor:sketch}, whose assumptions are satisfied by 
Theorem~\ref{thm:bounding the quiver} (proved in 
Section~\ref{sec:fundamental set}) below: 

\begin{theorem} \label{thm:bounding the quiver} 
Let $(\qui,\alpha)$ be a $\tau\sigma$-minimal element in 
\[\mathcal{C}:=\{(\qui',\alpha') \mid \qui'\mbox{ is  a wild connected quiver, }\alpha'\in F_{\qui'},\ 
\mathrm{supp}(\alpha')=\qui'
\}.\] 
Set $d:=1-\langle \alpha,\alpha\rangle_{\qui}$.  
Then 
\begin{itemize}
\item[(i)] $|\qui_0| \le 2(d-1)+ 36(d-1)^2(12d-11)$;   
\item[(ii)] $|\qui_1|\le 2(d-1)+ 36(d-1)^2(12d-11)+12(d-1)^2$;  
\item[(iii)] $\max\{\alpha(v)\mid v\in \qui_0\} \le 18(d-1)$. 
\end{itemize}
\end{theorem}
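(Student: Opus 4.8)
The plan is to read off everything from the single identity obtained by pairing $\alpha$ with itself in the Cartan form. For $v\in\qui_0$ write $\rho(v)=\sum_{a\in\qui_1:\target a=v}\alpha(\source a)$ and $\omega(v)=\sum_{a\in\qui_1:\source a=v}\alpha(\target a)$, and set $m_v:=\rho(v)+\omega(v)-2\alpha(v)=-(\alpha,\varepsilon_v)_{\ugqui}$. Membership $\alpha\in F_{\ugqui}$ means exactly $m_v\ge 0$ for all $v$, and summing with weights $\alpha(v)$ gives
\[\sum_{v\in\qui_0}\alpha(v)\,m_v=-(\alpha,\alpha)_{\ugqui}=-2\langle\alpha,\alpha\rangle_{\qui}=2(d-1).\]
Since $\alpha$ is sincere, $\alpha(v)\ge 1$, so the set $P:=\{v\mid m_v>0\}$ of positive-defect vertices satisfies $|P|\le\sum_{v\in P}\alpha(v)\le\sum_{v\in P}\alpha(v)m_v\le 2(d-1)$. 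Moreover $\sum_{v\in P}\bigl(\rho(v)+\omega(v)\bigr)=\sum_{v\in P}\bigl(m_v+2\alpha(v)\bigr)\le 6(d-1)$, which, as each incident arrow contributes at least $1$ to $\rho+\omega$, bounds the number of arrows meeting $P$ and hence the number of vertices adjacent to $P$ by $6(d-1)$. Thus $P$ and its immediate surroundings are already controlled; the whole difficulty lies with the \emph{null} vertices, those with $m_v=0$, on which $\alpha$ is harmonic in the sense $\rho(v)+\omega(v)=2\alpha(v)$ and can a priori be large and numerous.

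Next I would extract structural consequences of $\tau\sigma$-minimality. First, any large vertex $u$ is automatically null: largeness gives $\rho(u),\omega(u)\le\alpha(u)$, so $2\alpha(u)\le\rho(u)+\omega(u)=2\alpha(u)+m_u$ forces $m_u=0$ and in fact $\rho(u)=\omega(u)=\alpha(u)$ (in particular a large vertex is neither a source nor a sink). The crucial computation is that applying $\tau_u$ at a null vertex leaves the defect $m_v$ of every other vertex unchanged: an arrow $v\to u$ of weight $\alpha(u)$ is replaced by new arrows carrying total weight $\omega(u)=\alpha(u)$, and symmetrically on the incoming side, so $\rho$ and $\omega$ are preserved at every $v\ne u$. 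Hence $\tau_u(\qui,\alpha)$ again lies in the fundamental set, remains sincere, stays connected (the rerouting links the in- and out-neighbours of $u$), and stays wild (a positive-defect vertex survives, since $u$ itself was null), i.e. $\tau_u(\qui,\alpha)\in\mathcal{C}$ with one fewer vertex. By minimality there are therefore \emph{no large vertices}; equivalently every null vertex $v$ is \emph{unbalanced}, $\rho(v)\ne\alpha(v)$.

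The reflections $\sigma$ promote this from a single-orientation statement to one about all reachable orientations, and this is what bounds $\alpha$. A source is always a small source (its defect $\omega(u)-2\alpha(u)\ge 0$ gives $\omega(u)\ge 2\alpha(u)>\alpha(u)$), and dually for sinks, so $\sigma_u$ is always applicable; at a null source or sink it merely reverses the arrows at $u$ and preserves all defects and all dimensions, so it keeps us inside $\mathcal{C}$ without making the pair smaller. Consequently, by minimality \emph{no orientation reachable from $(\qui,\alpha)$ by reversing null sources and sinks may contain a large vertex}, i.e. no such reorientation can make a null vertex balanced. Since at a null vertex $v$ the total incident weight $\rho(v)+\omega(v)=2\alpha(v)$ is orientation-independent, balancing $v$ amounts to splitting the multiset of weights $\alpha(x)$ of its incident arrows into two halves each summing to $\alpha(v)$; an exchange/subset-sum argument shows that if the maximal value $M=\max_v\alpha(v)$ were too large, one could reach, through admissible source/sink reversals, an orientation balancing a vertex of value $M$, a contradiction. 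Turning this into the explicit bound $M\le 18(d-1)$ of part~(iii) is the \textbf{main obstacle}: one must control precisely which orientations are reachable by null source/sink reversals and combine this with the harmonicity of $\alpha$ on the null part.

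Finally, parts~(i) and~(ii) are a graph-theoretic count resting on the previous steps. Call a null vertex \emph{special} if it is adjacent to $P$ or has underlying degree at least $3$; the first kind number at most $6(d-1)$ by the first paragraph, and a bound on the second follows by combining this with the cyclomatic estimate for $\qui$ coming from $\langle\alpha,\alpha\rangle_{\qui}=1-d$. Deleting the special vertices decomposes the rest of the null part into chains of degree-$2$ vertices; harmonicity makes $\alpha$ an arithmetic progression along each chain, and unbalancedness rules out the constant progression, so the progression is strictly monotone and, together with the value bounds at the chain ends, the chain has length at most $12d-11$. The number of chains comes out quadratic in $d$ (at most $36(d-1)^2$), and multiplying yields the bound $36(d-1)^2(12d-11)$ on the null vertices, hence part~(i). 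Part~(ii) is obtained by the same bookkeeping applied to the arrows, after bounding the arrow multiplicities and the number of independent cycles — the excess of $|\qui_1|$ over $|\qui_0|$ — by $12(d-1)^2$, again via $\langle\alpha,\alpha\rangle_{\qui}=1-d$.
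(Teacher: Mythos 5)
Your first two paragraphs are sound and in fact sharpen part of the paper's Lemma~\ref{lemma:bounds}: the identity $\sum_v\alpha(v)\,(-(\alpha,\varepsilon_v)_{\ugqui})=2(d-1)$ gives the bounds on $\ugqui^{\alpha,-}_0$, and your observation that a large vertex is automatically a ``null'' vertex with $\rho(u)=\omega(u)=\alpha(u)$, so that $\tau_u$ preserves every other defect and keeps the pair in $\mathcal{C}$, is a correct and cleaner form of the mechanism the paper deploys only locally in Lemma~\ref{lemma:shrink in F_Q}. The problem is that everything after that is where the theorem actually lives, and there the argument is not carried out. The paper's decisive structural input is Lemma~\ref{lemma:dynkin}: the full subgraph $\ugqui^{\alpha,+}$ on the null vertices is a \emph{disjoint union of Dynkin graphs}, which follows from Kac's trichotomy for generalized Cartan matrices (case (N)/(Z) of \cite[Lemma 1.2]{kac}) applied to each component, using connectedness and wildness of $\qui$ to exclude the extended Dynkin case. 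This single fact is what makes the null part a forest with at most one branch vertex per piece, forces $\alpha$ to follow (essentially) the highest-root pattern of $A_n$, $D_n$, $E_{6,7,8}$ on each piece, and yields $\max\alpha\le 3\mu_{\ugqui,\alpha}\le 18(d-1)$ by explicit case analysis (Lemmas~\ref{lemma:arithmetic} and \ref{lemma:maxdim}). You never establish anything playing this role: your substitute for part (iii) is an ``exchange/subset-sum argument over reachable orientations'' that you yourself label the main obstacle and do not execute, and I do not see how it produces $18(d-1)$ without knowing the shape of the null components.

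Two concrete points fail as stated. First, in your last paragraph the claim that ``unbalancedness rules out the constant progression'' along a degree-$2$ chain is false: on a chain oriented $v_1\to v_2\leftarrow v_3\to v_4$ with $\alpha$ constant, $v_2$ is a sink with $\rho(v_2)=2\alpha(v_2)$ and $v_3$ is a source, so no vertex is large and no vertex is balanced, yet the progression is constant. Eliminating such chains genuinely requires first applying $\sigma$ at a null sink and then $\tau$, exactly as in the paper's Lemma~\ref{lemma:shrink in F_Q}; your paragraph on reachable orientations contains the right idea but you do not use it where it is needed. Second, your bound on the number of degree-$\ge 3$ null vertices via a ``cyclomatic estimate'' from $\langle\alpha,\alpha\rangle_{\qui}=1-d$ does not work: $\sum_a\alpha(\source a)\alpha(\target a)-\sum_v\alpha(v)^2=d-1$ bounds neither $|\qui_1|-|\qui_0|$ nor the branch vertices when $\alpha$ is large, so without the Dynkin (tree) structure the null part could a priori contain cycles and many high-degree vertices, and the chain decomposition underlying parts (i) and (ii) collapses. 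In short: the counting framework is right, but the theorem's content is the control of $\ugqui^{\alpha,+}$, and that is missing.
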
  

Denote by $\mathcal{X}$ a complete list of representatives of the isomorphism classes of 
projective algebraic varieties that are isomorphic to some 
\begin{itemize} 
\item projective space, or  
\item $\moduli(\qui,\alpha,\theta)$ where 
$\qui$ is a wild acyclic quiver,  $\alpha$ is a $\theta$-stable sincere dimension vector with $\alpha\in F_{\qui}$.  
\end{itemize} 
Note that by Theorem~\ref{thm:fundamental domain}, for each positive integer $d$, the set $\mathcal{X}$ contains finitely many varieties of dimension at most $d$. 
In Section~\ref{sec:consequences} we shall prove the following: 

\begin{proposition}\label{prop:2 or n1} 
Let $(\qui,\alpha)$ be a quiver-dimension vector pair, where $\qui$ is acyclic and $\alpha$ satisfies any of the following two assumptions: 
\goodbreak
\begin{itemize} 
\item[(i)] $\alpha(v)\le 2$ for all $v\in \qui_0$; 
\item[(ii)] There is a positive integer $n$ such that $\alpha(v)=n$ for all $v\in \qui_0$. 
\end{itemize} 
Then for any weight $\theta$, the projective algebraic variety $\moduli(\qui,\alpha,\theta)$ is isomorphic to a product of varieties from $\mathcal{X}$. 
\end{proposition}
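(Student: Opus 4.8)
The plan is to combine the product decomposition of Proposition~\ref{prop:product} with the reduction machinery of Theorem~\ref{thm:main}, and to use the dimension constraint to drive the relevant pieces into the fundamental set. First I would pass to $\mathrm{supp}(\alpha)$ and assume $\alpha$ sincere. By Proposition~\ref{prop:product}, $\moduli(\qui,\alpha,\theta)$ is a product whose factors are either projective spaces---already members of $\mathcal{X}$---or moduli spaces $\moduli(\qui^{(j)},\beta_j,\theta^{(j)})$ with $\qui^{(j)}$ connected wild acyclic and $\beta_j$ a sincere $\theta^{(j)}$-stable dimension vector (arising from the $\theta$-stable decomposition $\alpha=\sum_j m_j\beta_j$, so that $\beta_j\le\alpha$). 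It then suffices to show each such wild factor lies in $\mathcal{X}$. For this I would iterate the reductions $\tau_u,\sigma_u$ of Theorem~\ref{thm:main}: each step preserves the moduli space and, by part~(ii), preserves $\theta$-stability; stability keeps the support connected (Remark~\ref{remark:schur root}), and one checks directly that $\tau_u$ and $\sigma_u$ preserve acyclicity. Since $(|\qui_0|,|\alpha|)$ strictly decreases, the process terminates, either with the dimension vector in $F_{\qui}$ or with the quiver degenerated to Dynkin/extended Dynkin type. In the degenerate case the stable moduli space is a point or $\mathbb{P}^1$, hence a projective space in $\mathcal{X}$; otherwise a wild quiver with a sincere $\theta$-stable dimension vector in $F_{\qui}$ is by definition a member of $\mathcal{X}$.

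The engine is a reduction lemma: a sincere pair on a wild connected acyclic quiver that is \emph{not} in $F_{\qui}$ admits, under the dimension hypothesis, a large vertex or a small source/sink. Writing $I(v)=\sum_{\target b=v}\alpha(\source b)$ and $O(v)=\sum_{\source c=v}\alpha(\target c)$, the failure of the fundamental-set inequality at a vertex $v$ reads $2\alpha(v)>I(v)+O(v)$. Under assumption~(i) the summands inherit $\beta_j(v)\le 2$, and a short case analysis---according to whether $\alpha(v)$ is $1$ or $2$ and whether $v$ is a source, a sink, or internal---shows that every violating vertex is either large, so that $\tau_v$ removes it, or else a small source/sink with $I(v)$ or $O(v)$ equal to $3$, so that $\sigma_v$ applies and sends $\alpha(v)=2$ to $1$; in either case all entries stay $\le 2$. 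This settles case~(i).

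For case~(ii) the difficulty is that the stable summands $\beta_j$ of a constant dimension vector need \emph{not} be constant: they satisfy only $\beta_j\le n\mathbf{1}$, so for $n\ge 3$ they may have entries exceeding $2$ and fall outside the range where the lemma above applies. When $n\le 2$ one has $\beta_j\le n\mathbf{1}\le 2\cdot\mathbf{1}$, which reduces case~(ii) to case~(i). For general $n$ I would instead apply the reductions to the constant vector $n\mathbf{1}$ itself, \emph{before} invoking Proposition~\ref{prop:product}: the fundamental-set inequality for $n\mathbf{1}$ fails at $v$ exactly when $\deg_{\ugqui}(v)\le 1$, and such a vertex of degree $1$ is large and is removed by $\tau$, which preserves constancy. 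Pruning leaves therefore leads either to a single vertex or a Dynkin fragment (whose stable moduli are points in $\mathcal{X}$), to an $\tilde A$-type (tame) quiver---whose moduli spaces of $n\delta$ are symmetric products of $\mathbb{P}^1$'s, hence products of projective spaces---or to a wild quiver carrying $n\mathbf{1}\in F_{\qui}$, which is then a Schur root by Kraft--Riedtmann. If this $n\mathbf{1}$ is $\theta$-stable the factor is already in $\mathcal{X}$.

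The main obstacle is precisely the configuration excluded by the lemma: an $F_{\qui}$-violating vertex that is internal and \emph{not} large can be reduced neither by $\tau$ nor by $\sigma$, and in general such configurations genuinely obstruct reaching the fundamental set (this is the phenomenon behind the example of Section~\ref{sec:example}); the hypotheses (i) and (ii) are tailored to preclude it, and verifying this exclusion is the crux. The most delicate point is case~(ii) with $n\ge 3$ when the constant vector is $\theta$-semistable but not $\theta$-stable: here Proposition~\ref{prop:product} yields stable summands that may leave the constrained class, and one must show each summand's moduli space still lands in $\mathcal{X}$---real Schur summands contribute points, while imaginary summands must be shown to be drivable into $F_{\qui}$ by admissible reflections---which is where I expect the argument to require the most care.
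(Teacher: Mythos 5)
Your overall architecture --- decompose via Proposition~\ref{prop:product}, then drive each wild stable factor to a $\tau\sigma$-minimal pair and argue that minimality within the constrained class forces the dimension vector into $F_{\qui}$ --- is exactly the paper's, and your case analysis for assumption (i) (a vertex violating the fundamental-set inequality with all values $\le 2$ is necessarily large or a small source/sink, and the reductions preserve the bound) matches the paper's Proposition~\ref{prop:FQ 2 or n1}.

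However, there is a genuine gap in case (ii) for $n\ge 3$, and you have correctly located it without closing it: you must show that the $\theta$-stable summands of the constant vector $n\underline{1}$ are again constant on their supports, and neither of your routes does this. Your fallback of pruning degree-one vertices from $(\qui,n\underline{1})$ \emph{before} decomposing does not help, because after pruning you still hold a constant dimension vector that may be only $\theta$-semistable, and you face the same uncontrolled stable decomposition. The paper's missing ingredient is its Proposition~\ref{prop:stable decomp for 2 or n1}. One first shows that $\underline{1}$ is $\theta$-semistable whenever $n\underline{1}$ is: if every thin representation had a subrepresentation of dimension vector $\beta\le\underline{1}$ with $\sum_v\theta(v)\beta(v)<0$, then applying this to the representation assigning $1$ to every arrow shows $\mathrm{supp}(\beta)$ is successor-closed, whence every $n\underline{1}$-dimensional representation has a subrepresentation of dimension vector $n\beta$, contradicting the semistability of $n\underline{1}$. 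One then invokes Derksen--Weyman (Proposition 3.14 of their paper), which says that the $\theta$-stable decomposition of $n\underline{1}$ is $\{n\gamma_1\}\dot{+}\cdots\dot{+}\{n\gamma_t\}$, where $\underline{1}=\gamma_1\dot{+}\cdots\dot{+}\gamma_t$ is the $\theta$-stable decomposition of the thin vector. Every stable summand is therefore either $\gamma_i$ or $n\gamma_i$ with $\gamma_i\le\underline{1}$, hence constant ($=1$ or $=n$) on its support, so condition (ii) is inherited by all summands and the rest of your argument applies. Without this input (or an equivalent), the ``most delicate point'' you flag remains open and your proof of case (ii) is incomplete.
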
 

As an immediate consequence of 
Proposition~\ref{prop:2 or n1} and Theorem~\ref{thm:fundamental domain} we get 
the following generalization of the corresponding finiteness result from \cite{domokos-joo} on toric quiver varieties: 

\begin{corollary}\label{cor:toric generalization} 
For any positive integer $d$, 
up to isomorphism there are only finitely many projective algebraic varieties of dimension $d$ that 
are isomorphic to some moduli space $\moduli(\qui,\alpha,\theta)$,  where the dimension vector $\alpha$ satisfies condition (i) or (ii) from Proposition~\ref{prop:2 or n1}. 
\end{corollary}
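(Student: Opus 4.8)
The plan is to derive the corollary directly from Proposition~\ref{prop:2 or n1} together with the finiteness statement of Theorem~\ref{thm:fundamental domain}, by a short counting argument on products. Fix a positive integer $d$, and let $X$ be any projective algebraic variety of dimension $d$ realized as $\moduli(\qui,\alpha,\theta)$ with $\qui$ acyclic (which guarantees that $\moduli(\qui,\alpha,\theta)$ is projective) and $\alpha$ satisfying condition (i) or (ii). By Proposition~\ref{prop:2 or n1} we then have $X\cong X_1\times\cdots\times X_k$ with each $X_i\in\mathcal{X}$.

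First I would discard the factors that are single points (that is, copies of $\mathbb{P}^0$), which does not change the isomorphism type of the product; so I may assume $\dim X_i\ge 1$ for every $i$. Since the dimension of a product of varieties over $\F$ is the sum of the dimensions of its factors, $\sum_{i=1}^k\dim X_i=\dim X=d$, and therefore $k\le d$ while $\dim X_i\le d$ for each $i$. Next, by Theorem~\ref{thm:fundamental domain} the set $\mathcal{X}$ contains only finitely many varieties of dimension at most $d$; call this finite number $N$. Consequently $X$ is, up to isomorphism, a product of at most $d$ factors, each drawn from a fixed set of $N$ varieties. The number of unordered such choices is bounded by $\binom{N+d}{d}$, so there are only finitely many possibilities for the isomorphism type of $X$. (Distinct choices may yield isomorphic products, but this only decreases the count, which is harmless for an upper bound.)

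I expect no substantive obstacle here, since the real content of the corollary already resides in Proposition~\ref{prop:2 or n1} and Theorem~\ref{thm:fundamental domain}; this final step is the routine combinatorial packaging of those inputs. The only point that needs care is the bookkeeping that keeps the number of factors finite: removing the zero-dimensional (point) factors forces each remaining factor to contribute at least $1$ to the total dimension $d$, which simultaneously bounds the number of factors by $d$ and confines every factor to dimension at most $d$, precisely the regime in which Theorem~\ref{thm:fundamental domain} supplies finiteness.
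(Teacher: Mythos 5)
Your argument is correct and is essentially the paper's own: the paper states Corollary~\ref{cor:toric generalization} as an immediate consequence of Proposition~\ref{prop:2 or n1} and Theorem~\ref{thm:fundamental domain}, and your counting of products of at most $d$ positive-dimensional factors drawn from the finitely many members of $\mathcal{X}$ of dimension at most $d$ is exactly the routine packaging that makes this precise. The only point worth stating explicitly is the reduction to acyclic $\qui$: projectivity of $\moduli(\qui,\alpha,\theta)$ forces acyclicity (after passing to the support of $\alpha$), as noted at the start of the paper's proof of Proposition~\ref{prop:product}, so your standing assumption is harmless.
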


\begin{remark} \begin{itemize} \item[(i)] 
By \cite[Theorem 3.1]{domokos:gmj} we know that for a quiver $\qui$, all moduli spaces $\moduli(\qui,\alpha,\theta)$ are smooth if and only if all moduli spaces $\moduli(\qui,\alpha,\theta)$ are isomorphic to products of projective or affine spaces if and only if  $\ugqui$ is the disjoint union of Dynkin or extended Dynkin graphs. 
\item[(ii)]Several families of interesting examples of quiver moduli spaces that are Fano varieties are worked out in 
\cite{franzen-reineke-sabatini}. 
\end{itemize} 
\end{remark} 

In Section~\ref{sec:example} we provide an example of a wild quiver $\qui$ and a dimension vector $\alpha$ not contained in the fundamental set $F_{\qui}$, such that 
the pair $(\qui,\alpha)$ can not be transformed by our operations of the form $\tau_u$, $\sigma_u$ to a quiver-dimension vector pair where the dimension vector is contained in the fundamental set.


\section{Some invariant theory} \label{sec:invariant theory} 

\begin{lemma}\label{lemma:shrink} 
Let $(\qui,\alpha)$ be a sincere quiver-dimension vector pair, $u\in \qui_0$ a large vertex for $(\qui,\alpha)$,  and 
let $\theta\in \mathbb{Z}^{\qui_0}$ be a weight such that $\alpha$ is $\theta$-semistable.  Then exactly one of the following holds for 
the weight $\theta$ and the dimension vector $\alpha$: 
\begin{itemize}
\item[(a)] $\theta(u)>0$ and $\alpha(u)=\sum_{b\in \qui_1\colon \target b=u}\alpha(\source b)$; 
\item[(b)] $\theta(u)<0$ and $\alpha(u)=\sum_{c\in \qui_1 \colon \source c=u}\alpha(\target c)$; 
\item[(c)] $\theta(u)=0$. 
\end{itemize}
Moreover, denote by $\tau_u\theta$  the weight for $\tau_u\qui$  given by 
\begin{itemize}
\item $(\tau_u\theta)(v)=\theta(v)+|\{b\in \qui_1\mid \target b=u,\ \source b=v\}|\cdot \theta(u)$ 
 in case (a); 
\item $(\tau_u\theta)(v)=\theta(v)+|\{c\in\qui_1\mid \source c=u,\ \target c=v\}|\cdot \theta(u)$  in case (b); 
\item $(\tau_u\theta)(v)=\theta(v)$ for all $v\in(\tau_u\qui)_0=\qui_0\setminus\{u\}$ in case (c). 
\end{itemize}
Then we have the isomorphism of graded algebras 
\[\semi(\qui,\alpha,\theta)\cong \semi(\tau_u\qui,\tau_u\alpha,\tau_u\theta)
\text{ if }\tau_u\theta\neq 0\text{ or }\theta=0,\] 
whereas 
$\semi(\qui,\alpha,\theta)$ is isomorphic as a graded algebra to the univariate polynomial algebra 
$\semi(\tau_u\qui,\tau_u\alpha,\tau_u\theta)[t]$ (where the degree of $t$ is $1$) if 
$\tau_u\theta=0$ and $\theta\neq 0$. 
\end{lemma}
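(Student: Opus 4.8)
The plan is to prove the trichotomy first, using only the existence of a single $\theta$-semistable representation $R$ of dimension vector $\alpha$, and then to establish the graded-algebra isomorphism by reducing the action of the factor $\GL(\alpha(u))$ to the first fundamental theorem for $\GL$. Recall that $\theta$-semistability of $\alpha$ provides an $R$ with $\sum_{v}\theta(v)\alpha(v)=0$ and $\sum_{v}\theta(v)\dim_{\F} S_v\ge 0$ for every subrepresentation $S$. I would test this against two subrepresentations. First, let $S$ be given by $S_u=\sum_{b\colon \target b=u}\operatorname{im}(R_b)$ and $S_v=R_v$ for $v\neq u$; since $\operatorname{im}(R_b)\subseteq S_u\subseteq R_u$ and there is no loop at $u$, one checks directly that $S$ is a subrepresentation. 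Using $\sum_v\theta(v)\alpha(v)=0$, its semistability inequality reads $\theta(u)\bigl(\dim_{\F}S_u-\alpha(u)\bigr)\ge 0$, while largeness gives $\dim_{\F}S_u\le\sum_{b}\alpha(\source b)\le\alpha(u)$; hence if $\theta(u)>0$ equality holds throughout, forcing $\alpha(u)=\sum_{b\colon\target b=u}\alpha(\source b)$, which is case (a). Dually, with $S'_u=\bigcap_{c\colon\source c=u}\ker(R_c)$ and $S'_v=0$ otherwise, semistability gives $\theta(u)\dim_{\F}S'_u\ge 0$ whereas $\dim_{\F}S'_u\ge\alpha(u)-\sum_{c}\alpha(\target c)\ge 0$, so $\theta(u)<0$ forces $\dim_{\F}S'_u=0$ and thus $\alpha(u)=\sum_{c\colon\source c=u}\alpha(\target c)$, which is case (b). As exactly one of $\theta(u)>0$, $\theta(u)<0$, $\theta(u)=0$ holds, exactly one of (a), (b), (c) holds.

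For the isomorphism I would single out the factor $G:=\GL(\alpha(u))$ inside $\GL(\alpha)$ and write $\rep(\qui,\alpha)=W\oplus\operatorname{Mat}_{m\times p}\oplus\operatorname{Mat}_{q\times m}$, where $m=\alpha(u)$, the matrix $A$ (resp. $B$) collects the maps into (resp. out of) $u$, $W$ collects the remaining arrows, $p=\sum_{b}\alpha(\source b)$, $q=\sum_{c}\alpha(\target c)$, and $G$ acts by $(A,B)\mapsto(gA,Bg^{-1})$ and trivially on $W$. The block of $BA$ indexed by a pair $(b,c)$ is exactly the composition $R_cR_b$, i.e. the coordinate block of the new arrow of $\tau_u\qui$ from $\source b$ to $\target c$; thus the map sending a representation to its compositions identifies $\F[\rep(\tau_u\qui,\tau_u\alpha)]$ with $\F[\text{entries of }BA,\ W]$, and by the first fundamental theorem for $\GL$ (valid in arbitrary characteristic) this equals $\F[\rep(\qui,\alpha)]^{G}$. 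This already disposes of case (c): there the $u$-component of $n\theta$ vanishes, so $\semi(\qui,\alpha,\theta)_n$ is the space of $G$-invariant relative invariants, matched under this identification with $\semi(\tau_u\qui,\tau_u\alpha,\tau_u\theta)_n$ for $\tau_u\theta=\theta\vert_{\qui_0\setminus\{u\}}$.

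In cases (a) and (b) the trichotomy guarantees that the relevant matrix is square, $m=p$ in (a) and $m=q$ in (b), so that $\det A$ (resp. $\det B$) is available. The crux is to describe the $G$-semi-invariants of weight $\det^{k}$: on the dense open locus $\{\det A\neq 0\}$ the $G$-action is free, and the resulting principal $\GL_m$-bundle over $\operatorname{Mat}_{q\times p}\times W$ is trivial (the section $A=I$ trivializes it), so every character line bundle is trivial and the semi-invariants of weight $\det^{k}$ form the free rank-one module $\det(A)^{k}\cdot\F[\rep(\qui,\alpha)]^{G}$ for $k\ge 0$, the symmetric statement holding with $\det(B)$ in case (b); polynomiality on all of $\rep(\qui,\alpha)$ is automatic because these expressions are manifestly polynomial. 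I expect this to be the main obstacle, precisely because the freeness of the semi-invariant module must be argued in a characteristic-free way rather than by quoting an isotypic decomposition. Granting it, a direct computation shows that $\det A$ is a $\GL(\alpha)$-relative invariant of weight $\rho$ with $\rho(u)=1$ and $\rho(v)=-|\{b\colon\target b=u,\ \source b=v\}|$ (and the $\target$-analogue, with $u$-component $-1$, for $\det B$).

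It then remains to assemble the pieces. A relative $\GL(\alpha)$-invariant of weight $n\theta$ is exactly a $G$-semi-invariant of weight $\det^{n\theta(u)}$ that is $G'$-relative of weight $(n\theta(v))_{v\neq u}$, where $G'=\prod_{v\neq u}\GL(\alpha(v))$. Hence the assignment $f\mapsto f/\det(A)^{n\theta(u)}$ in case (a), resp. $f\mapsto f/\det(B)^{-n\theta(u)}$ in case (b) (both exponents being nonnegative), carries $\semi(\qui,\alpha,\theta)_n$ bijectively onto $\semi(\tau_u\qui,\tau_u\alpha,\tau_u\theta)_n$, with $\tau_u\theta(v)=n^{-1}\bigl((n\theta(v))_{v\neq u}-n\theta(u)\rho(v)\bigr)=\theta(v)+|\{b\colon\target b=u,\ \source b=v\}|\cdot\theta(u)$ in case (a), and the $\target$-analogue in case (b), matching the stated formulas. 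These maps are multiplicative, giving the graded-algebra isomorphism whenever $\tau_u\theta\neq 0$ (and in the $\theta=0$ instance of case (c)). Finally, if $\tau_u\theta=0$ but $\theta\neq 0$, the element $t:=\det(A)^{\theta(u)}$ in case (a), resp. $\det(B)^{-\theta(u)}$ in case (b), lies in degree one, has weight exactly $\theta$, and is transcendental over $\F[\rep(\qui,\alpha)]^{G}$ (distinct $G$-weights in a domain force independence); since $\semi(\qui,\alpha,\theta)_n=t^{n}\cdot\F[\rep(\qui,\alpha)]^{G}$ for all $n\ge 0$, this yields $\semi(\qui,\alpha,\theta)\cong\semi(\tau_u\qui,\tau_u\alpha,\tau_u\theta)[t]$ with $t$ of degree one, as claimed.
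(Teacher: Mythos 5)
Your proof is correct, and it reaches the paper's conclusion by a genuinely different route at two points, while sharing the overall architecture (split off the factor $\GL(\F^{\alpha(u)})$, identify its invariant ring with $\F[\rep(\tau_u\qui,\tau_u\alpha)]$ via the composition map and the first fundamental theorem for $\GL$, then shift by a power of a determinant). For the trichotomy, you exhibit the subrepresentations $S_u=\sum_{b}\operatorname{im}(R_b)$, $S_v=R_v$ for $v\neq u$, and $S'_u=\bigcap_{c}\ker(R_c)$, $S'_v=0$ for $v\neq u$, of a single $\theta$-semistable $R$; the paper instead derives (a)/(b)/(c) from the first fundamental theorem for $\SL(\F^{\alpha(u)})$, noting that a nonzero relative invariant whose $\GL(\F^{\alpha(u)})$-weight is $\det^{n\theta(u)}$ with $\theta(u)<0$ must be divisible by $\det(c)$, which forces $x_c$ to be square. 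Your version is more elementary and exposes the representation-theoretic meaning of (a) and (b); the paper's gets the trichotomy and the divisibility statement in one stroke. For the key freeness step --- every polynomial semi-invariant of $\GL(\F^{\alpha(u)})$-weight $\det^{k}$, $k\ge 0$, equals $\det(A)^{k}$ times an invariant --- you localize at $\det A\neq 0$, use the section $A=I$ to trivialize the free quotient, descend $f/\det(A)^{k}$ to a polynomial on the affine base, and clear denominators by density; the paper again invokes the $\SL$-form of the fundamental theorem and sorts monomials in the generators by their $\det$-weight. Your descent argument is characteristic-free and needs only the $\GL$-form of the FFT, at the price of checking triviality of the principal bundle, which your explicit section supplies. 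The remaining bookkeeping --- the $\GL(\alpha)$-weight of $\det A$, the resulting formula for $\tau_u\theta$, multiplicativity of dividing by $\det(A)^{n\theta(u)}$ in degree $n$, and the transcendence of $t=\det(A)^{\theta(u)}$ over the invariant ring when $\tau_u\theta=0\neq\theta$ --- coincides with the paper's.
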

\begin{proof} 
Since $u$ is assumed to be large for $(\qui,\alpha)$, 
there is no loop adjacent to $u$. 
Denote by $b_1,\dots,b_k$ the arrows of $\qui$ whose target is $u$ and denote by $c_1,\dots,c_l$ the arows of $\qui$ whose source is $u$ (so $\deg_{\ugqui}(u)=k+l$). The cases when $u$ is a source (i.e. $k=0$) or when $u$ is a sink (i.e. $l=0$) are allowed.
Denote by  $d_{i,j}$ the arrow  of $\tau_u\qui$ corresponding to the pair $(b_i,c_j)$ 
for each pair $(i,j)\in \{1,\dots,k\}\times \{1,\dots,l\}$, so  
$\source d_{i,j}=\source b_i$ and  $\target d_{i,j}=\target c_j$ . We have 
$(\tau_u\qui)_0=\qui_0\setminus \{u\}$ and 
$(\tau_u\qui)_1=\qui_1\setminus \{b_1,\dots,b_k,c_1,\dots,c_l\}\sqcup \{d_{i,j}\mid i=1,\dots,k; j=1,\dots,l\}$. Moreover, 
$\tau_u\alpha=\alpha\vert_{\qui_0\setminus \{u\}}$. 

For $x\in \rep(\qui,\alpha)$ consider the $\alpha(u)\times\alpha(\source b_i)$ matrices 
$x_{b_i}$ $(i=1,\dots,k)$. Put them in a row as the blocks of a matrix 
\[x_b:=
\left(\begin{array}{ccc}x_{b_1} & \dots & x_{b_k}\end{array}\right)
\in \F^{\alpha(u)\times \sum_{i=1}^k\alpha(\source b_i)}.\] 
Similarly, put the $\alpha(\target c_j)\times \alpha(u)$ matrices $x_{c_j}$ in a column as blocks of a 
matrix 
\[x_c:=\left(\begin{array}{c}x_{c_1} \\\vdots \\x_{c_l}\end{array}\right) 
\in \F^{(\sum_{j=1}^l\alpha(\target c_j))\times\alpha(u)}.\]
When $\alpha(u)=\sum_{i=1}^k\alpha(\source b_i)$, we denote by $\det(b)$ the polynomial function 
$\rep(\qui,\alpha)\to \F$, $x\mapsto \det(x_b)$. 
Similarly, when $\alpha(u)=\sum_{j=1}^l\alpha(\target c_j)$, we denote by $\det(c)$ the polynomial function 
$\rep(\qui,\alpha)\to \F$, $x\mapsto \det(x_c)$. 
For $g\in \GL(\alpha)$ and $x\in \rep(\qui,\alpha)$ we have 
\[(g\cdot x)_b=g_ux_b\mathrm{diag}(g_{\source b_1},\dots,g_{\source b_k})^{-1} 
\text{ and }
(g\cdot x)_c=\mathrm{diag}(g_{\target c_1},\dots,g_{\target c_l})x_cg_u^{-1}.\] 
It follows by multiplicativity of the determinant that $\det(b)$ (when defined) is a relative invariant of weight 
$\beta:=\varepsilon_u-\sum_{i=1}^k\varepsilon_{\source b_i}$ and 
$\det(c)$ (when defined) is a relative invariant of weight 
$\gamma:=-\varepsilon_u+\sum_{j=1}^l\varepsilon_{\target c_j}$. 

Identify $\GL(\tau_u\alpha)$ with the subgroup 
$\{g\in \GL(\alpha)\mid g_u=\mathrm{id}_{\F^{\alpha(u)}}\}$ of $\GL(\alpha)$ and identify 
$\GL(\F^{\alpha(u)})$ with the subgroup 
$\{g\in \GL(\alpha)\mid \forall v\neq u:\ g_v=\mathrm{id}_{\F^{\alpha(v)}}\}$. 
This way $\GL(\alpha)=\GL(\tau_u\alpha)\times \GL(\F^{\alpha(u)})$. 
We have 
\[\F[\rep(\qui,\alpha)]^{\GL(\alpha),n\theta}\subseteq 
\F[\rep(\qui,\alpha)]^{\GL(\F^{\alpha(u)}),n\theta(u)}
\subseteq \F[\rep(\qui,\alpha)]^{\SL(\F^{\alpha(u)})},\]
where  $\F[\rep(\qui,\alpha)]^{\GL(\alpha),n\theta(u)}$ stands for the space of relative 
$\GL(\F^{\alpha(u)})$-invariants on $\rep(\qui,\alpha)$ with weight $n\theta(u)$. 
Consequently, 
\[\semi(\qui,\alpha,\theta)\subseteq \F[\rep(\qui,\alpha)]^{\SL(\alpha)} \subseteq
\F[\rep(\qui,\alpha)]^{\SL(\F^{\alpha(u)})}.\] 
By the First Fundamental Theorem of Classical Invariant Theory for the special linear group (see for example \cite{weyl}, 
\cite{deconcini-procesi}), we have 
that the algebra $\F[\rep(\qui,\alpha)]^{\SL(\F^{\alpha(u)})}$ is generated by 
$\F[\rep(\qui,\alpha)]^{\GL(\F^{\alpha(u)})}$ and whichever of $\det(b)$, $\det(c)$ are  defined. 
Each function mapping $x$ to an entry of $x_c\circ x_b$ is a $\GL(\F^{\alpha(u)})$-invariant. 
When both $\det(b)$ and $\det(c)$ are defined, then 
$\det(b)\det(c)$ is the function $x\mapsto \det(x_c\circ x_b)$, thus $\det(b)\det(c)$ 
belongs to $\F[\rep(\qui,\alpha)]^{\GL(\F^{\alpha(u)})}$. 
It follows that  
\begin{equation}\label{eq:theta(u)<0}\semi(\qui,\alpha,\theta)_n\subseteq  \det(c)^{-n\theta(u)}\cdot \F[\rep(\qui,\alpha)]^{\GL(\F^{\alpha(u)})}\text{ when }\theta(u)<0 \end{equation} 
(in particular, necessarily we have $\alpha(u)=\sum_{j=1}^l\alpha(\target c_j)$ in this case), 
\[\semi(\qui,\alpha,\theta)_n\subseteq  \det(b)^{n\theta(u)}\cdot \F[\rep(\qui,\alpha)]^{\GL(\F^{\alpha(u)})}\text{ when }\theta(u)>0 \] 
(in particular, necessarily we have $\alpha(u)=\sum_{i=1}^k\alpha(b_i)$ in this case), 
and 
\[\semi(\qui,\alpha,\theta)\subseteq   \F[\rep(\qui,\alpha)]^{\GL(\F^{\alpha(u)})}\text{ when }\theta(u)=0. \] 
This shows that exactly one of (a), (b), or (c) holds for $\alpha$ and $\theta$. 

Consider the polynomial map $\mu:\rep(\qui,\alpha)\to \rep(\tau_u\qui,\tau_u\alpha)$ 
given by 
\[\mu(x)_a=\begin{cases} x_a \ &\text{ for }\ a\in (\tau_u\qui)_1\setminus \{d_{i,j}\mid i=1,\dots,k; j=1,\dots,l\}\\
x_{c_j} \circ x_{b_i} \ &\text{ for }\ a=d_{i,j}.\end{cases} 
\] 
The assumption 
$\alpha(u)\ge \max\{\sum_{i=1}^k\alpha(\source b_i),\sum_{j=1}^l\alpha(\target c_j)\}$ 
implies that $\mu$ is surjective. It 
is $\GL(\F^{\alpha(u)})$-invariant, and by the First Fundamental Theorem 
of Classical Invariant Theory for the general linear group (see for example \cite{weyl}, 
\cite{deconcini-procesi}), the comorphism $\F[\rep(\tau_u\qui,\tau_u\alpha)]\to \F[\rep(\qui,\alpha)]$ of $\mu$ 
gives an isomorphism 
$\mu^*:\F[\rep(\tau_u\qui,\tau_u\alpha)]\to  \F[\rep(\qui,\alpha)]^{\GL(\F^{\alpha(u)})}$. 
Moreover, the map $\mu$ is $\GL(\tau_u\alpha)$-equivariant, implying that 
$\mu^*$ restricts to an isomorphism of graded algebras
\begin{equation}\label{eq:mu*} 
\mbox{}\semi(\tau_u\qui,\tau_u\alpha,\tau_u\theta)\cong 
\semi(\qui,\alpha,\hat\tau_u\theta), 
\end{equation} 
where $\hat\tau_u\theta\in \mathbb{Z}^{\qui_0}$ is the weight 
for $\qui$ with $\hat\tau_u\theta(u)=0$ and $\hat\tau_u\theta(v)=\tau_u\theta(v)$ for all $v\in \qui_0\setminus\{u\}$. 
In particular, if $\theta(u)=0$, then $\theta=\hat\tau_u\theta$, and we are done with the proof in this case (i.e. in case (c)). 

So it remains to relate the graded algebras $\semi(\qui,\alpha,\hat\tau_u\theta)$ and 
$\semi(\qui,\alpha,\theta)$  when $\theta(u)\neq  0$. 
Suppose that $\theta(u)<0$, i.e. we are in case (b). 
Then $\hat\tau_u\theta=\theta+\theta(u)\cdot \gamma$ where 
$\gamma$ is the weight of the relative invariant $\det(c)$. 
Note that if for some $f$ and $h$ in $\F[\rep(\qui,\alpha)]$ we have 
$f=\det(c)^{\ell} h$, then $f$ is a relative invariant of weight $\kappa$ if and only if 
$h$ is a relative invariant of weight $\kappa-\ell \cdot \gamma$. 
Therefore we have 
\begin{align*}(\det(c)^{-n\theta(u)}\cdot \F[\rep(\qui,\alpha)]^{\GL(\F^{\alpha(u)})})
\cap \F[\rep(\qui,\alpha)]^{\GL(\alpha),n\theta}
\\ =\det(c)^{-n\theta(u)}\cdot \F[\rep(\qui,\alpha)]^{\GL(\alpha),n(\theta+\theta(u)\cdot \gamma)}\end{align*}
Thus by \eqref{eq:theta(u)<0} we have 
\begin{align}\label{eq:tauhat}
\semi(\qui,\alpha,\theta)_n
&=(\det(c)^{-n\theta(u)}\cdot \F[\rep(\qui,\alpha)]^{\GL(\F^{\alpha(u)})} )\cap \semi(\qui,\alpha,\theta)_n 
\\ \notag &= (\det(c)^{-n\theta(u)}\cdot \F[\rep(\qui,\alpha)]^{\GL(\F^{\alpha(u)})} )\cap  \F[\rep(\qui,\alpha)]^{\GL(\alpha),n\theta}
\\ \notag &=\det(c)^{-n\theta(u)}\cdot \F[\rep(\qui,\alpha)]^{\GL(\alpha),n(\theta+\theta(u)\cdot \gamma)}
\\ \notag &=\det(c)^{-n\theta(u)}\cdot \F[\rep(\qui,\alpha)]^{\GL(\alpha),n\hat\tau_u\theta}. 
\end{align} 
If $\tau_u\theta=0$, then $\hat\tau_u\theta=0$, and \eqref{eq:tauhat} shows that 
$\semi(\qui,\alpha,\theta)$ is an univariate polynomial algebra over 
$\F[\rep(\qui,\alpha)]^{\GL(\alpha)}=\semi(\qui,\alpha,\hat\tau_u\theta)$ generated by $\det(c)$, and we are done by \eqref{eq:mu*}. 
When  $\tau_u\theta\neq 0$, then $\hat\tau_u\theta\neq 0$, and \eqref{eq:tauhat} shows 
\begin{equation*}\label{eq:2tauhat}
\semi(\qui,\alpha,\theta)_n=\det(c)^{-n\theta(u)}\cdot \semi(\qui,\alpha,\hat\tau_u\theta)_n.
\end{equation*}
Denote by $\varphi_n: \semi(\qui,\alpha,\hat\tau_u\theta)_n\to \semi(\qui,\alpha,\theta)_n$ the 
$\F$-linear isomorphism $f\mapsto \det(c)^{-n\theta(u)}\cdot f$, and by 
$\varphi:\semi(\qui,\alpha,\hat\tau_u\theta)\to \semi(\qui,\alpha,\theta)$ the direct 
sum 
$\varphi=\bigoplus_{n=0}^{\infty}\varphi_n: \bigoplus_{n=0}^\infty \semi(\qui,\alpha,\hat\tau_u\theta)_n\to \bigoplus_{n=0}^\infty \semi(\qui,\alpha,\theta)_n$. 
This is an isomorphism of graded vector spaces. Moreover, $\varphi$ is an isomorphism 
of algebras as well: indeed, it is sufficient to check multiplicativity of $\varphi$ on homogeneous elements. Take $f\in \semi(\qui,\alpha,\hat\tau_u\theta)_m$ and 
$h\in \semi(\qui,\alpha,\hat\tau_u\theta)_n$. Then $fh\in \semi(\qui,\alpha,\hat\tau_u\theta)_{m+n}$, and 
therefore 
\begin{align*}\varphi(f\cdot h)&=\det(c)^{-(m+n)\theta(u)}fh
\\ &=(\det(c)^{-m\theta(u)}f)\cdot (\det(c)^{-n\theta(u)}h)=\varphi(f)\cdot \varphi(h).
\end{align*} 
So $\varphi: \semi(\qui,\alpha,\hat\tau_u\theta)\to \semi(\qui,\alpha,\theta)$ is an isomorphism of graded algebras, and we are done by \eqref{eq:mu*}. 
Case (a) (i.e. $\theta(u)>0$)   can be dealt with similarly; one just needs to replace $\det(c)$ by $\det(b)$ in the above argument. 
\end{proof}

\begin{remark} 
Reduction steps based on Classical Invariant Theory, similar in flavour to Lemma~\ref{lemma:shrink} were used before efficiently for example in 
\cite{domokos-lenzing:1} or \cite{bocklandt-smooth}. 
\end{remark} 

We need to recall a known isomorphism between algebras of semi-invariants of quivers related by a reflection transformation (in the sense of  \cite{bernstein-gelfand-ponomarev}). 
Let $u$ be a small source or a small sink for $(\qui,\alpha)$ (see Definition~\ref{def:sigma}). 
It is proved in \cite[Proposition 2.1 a)]{kac} that the algebras $\F[\rep(\qui,\alpha)]^{\SL(\alpha)}$ and 
$\F[\rep(\sigma_u\qui,\sigma_u\alpha)]^{\SL(\sigma_u\alpha)}$ are isomorphic. 
Moreover, one can deduce from the proof the following consequence,  
stated in \cite[Theorem 23]{skowronski-weyman}: 

\begin{lemma}\label{lemma:reflection} 
Let $u$ be a small source or a small sink for the quiver-dimension vector pair $(\qui,\alpha)$, and let $\theta\in\mathbb{Z}^{\qui_0}$ be a weight such that $\alpha$ is $\theta$-semistable. Define the weight $\sigma_u\theta$ as follows: 
 \[(\sigma_u\theta)(v)=\begin{cases} -\theta(u) &\text{if}\quad v=u\\
\theta(v)+|\{b\in \qui_1\mid \target b=u,\  \source b=v\}|\cdot \theta(u)& \text{if}\quad v\neq u, 
\ u \mbox{ is a sink;}
\\ \theta(v)+|\{c\in \qui_1\mid \source c=u,\  \target c=v\}|\cdot \theta(u)& \text{if}\quad v\neq u, 
\ u \mbox{ is a source;}
\end{cases}
\]
Then the graded algebras 
$\semi(\qui,\alpha,\theta)$ and  $\semi(\sigma_u\qui,\sigma_u\alpha,\sigma_u\theta)$ are isomorphic.  
\end{lemma}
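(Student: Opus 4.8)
The plan is to build the isomorphism from the Bernstein--Gelfand--Ponomarev reflection functor and to reduce its verification to the classical duality between a Grassmannian and its dual, in the same spirit as the proof of Lemma~\ref{lemma:shrink}. I would treat the case of a small source $u$ in detail; the small sink case is obtained from it by transposing all arrow maps (which interchanges sources with sinks and left with right multiplication), so no separate argument is needed. Keeping the notation of Lemma~\ref{lemma:shrink} for the arrows $c_1,\dots,c_l$ with $\source c_j=u$, set $p:=\alpha(u)$, $q:=\sum_{j=1}^l\alpha(\target c_j)$, and $m:=q-p$; the small source condition reads $q>p$, so $m>0$ and $m=(\sigma_u\alpha)(u)$. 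Write $W:=\F^q$, viewed as a module for $H:=\prod_{j}\GL(\F^{\alpha(\target c_j)})$, where the block $\GL(\F^{\alpha(\target c_j)})$ occurs with multiplicity $r_j:=|\{c\in\qui_1\mid \source c=u,\ \target c=\target c_j\}|$.

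First I would separate the vertex $u$ exactly as in Lemma~\ref{lemma:shrink}. The arrows not adjacent to $u$ and the dimensions at the vertices $v\neq u$ are common to $\qui$ and $\sigma_u\qui$, so both coordinate rings are the coordinate ring of a common factor tensored with the maps at $u$: for $\qui$ these assemble into the stacked matrix $x_c\in\mathrm{Hom}(\F^{\alpha(u)},W)$, and for $\sigma_u\qui$ the reflected arrows assemble into $y\in\mathrm{Hom}(W,\F^{m})$, the two being matched by the reflection functor via $\mathrm{im}(x_c)=\ker(y)$. Since $u$ is a source, $\GL(\F^{\alpha(u)})$ acts only on $x_c$, and $\theta$-semistability of $\alpha$ forces $\theta(u)\le 0$ (test the subrepresentation supported off $u$, whose weight is $-\theta(u)\alpha(u)$). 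Hence, by the First Fundamental Theorem for $\SL$, a degree $n$ element of $\semi(\qui,\alpha,\theta)$ is, in the $x_c$-variables, a form of degree $d:=-n\theta(u)$ in the maximal minors of $x_c$, i.e. an element of the degree $d$ part of the homogeneous coordinate ring $A$ of $\mathrm{Gr}(p,W)$; dually, a degree $n$ element of $\semi(\sigma_u\qui,\sigma_u\alpha,\sigma_u\theta)$ is a form of degree $-n(\sigma_u\theta)(u)=d$ in the maximal minors of $y$, an element of the degree $d$ part of the homogeneous coordinate ring $B$ of $\mathrm{Gr}(m,W^{*})$.

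The key step is the duality $\mathrm{Gr}(p,W)\cong\mathrm{Gr}(m,W^{*})$, $U\mapsto U^{\perp}$, induced by the reflection functor. It gives an isomorphism of graded algebras $A\cong B$ whose degree $d$ part matches $\mathbb{S}_{(d^{p})}(W^{*})$ with $\mathbb{S}_{(d^{m})}(W)$; these irreducible $H$-modules agree up to a twist by $(\det W)^{\pm d}$, where $\det W$ restricts to $H$ as $\prod_j\det(h_{\target c_j})^{r_j}$. Tensoring this isomorphism with the coordinate ring of the common factor and extracting, in each degree $n$, the component transforming by the character $\chi_{n\theta}$ at the vertices $v\neq u$ produces the desired graded-algebra isomorphism $\semi(\qui,\alpha,\theta)\cong\semi(\sigma_u\qui,\sigma_u\alpha,\sigma_u\theta)$; multiplicativity is automatic because $A\cong B$ is an algebra isomorphism and the twisting characters in consecutive degrees are compatible.

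The hard part will be the weight bookkeeping in this last step, and in particular getting the sign right. One must verify that the $(\det W)^{\pm d}$ twist, together with $d=-n\theta(u)$, changes the relative-invariant weight at each neighbour $\target c_j$ by precisely $n\,r_j\,\theta(u)$ and reverses the sign at $u$, so that $\chi_{n\theta}$ for $\qui$ is carried to $\chi_{n\sigma_u\theta}$ for $\sigma_u\qui$ with $\sigma_u\theta$ as in the statement. The subtle point is that the weight of the lemma is the \emph{relative-invariant} weight, describing how $p(g\cdot x)$ scales, which is the negative of the naive weight of $p$ as a function; keeping track of this negation is what turns the seemingly wrong-signed twist into the stated shift $+r_j\theta(u)$. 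The hypothesis that $\alpha$ is $\theta$-semistable is used only to pin down $\theta(u)\le 0$ (dually $\theta(u)\ge 0$ for a sink), placing the construction in the regime where the minors carry the grading. As noted before the statement, one may alternatively just invoke \cite[Proposition 2.1 a)]{kac} and \cite[Theorem 23]{skowronski-weyman}, of which the construction above is the underlying mechanism.
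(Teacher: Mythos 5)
Your proposal follows essentially the same route as the paper's proof: isolate the vertex $u$, identify the relevant semi-invariants with the homogeneous coordinate ring of a Grassmannian via the First Fundamental Theorem for $\SL$, transport them through the duality $\mathrm{Gr}(p,W)\cong\mathrm{Gr}(m,W^{*})$ (this is exactly Proposition~\ref{prop:SL}, proved there via the pairing into $\bigwedge^{n}\F^{n}$), and track the determinant twist on the neighbouring $\GL$-blocks, which is the paper's computation \eqref{eq:zX_J}. The only blemish is the slip ``$-n(\sigma_u\theta)(u)=d$'' --- it should read $+n(\sigma_u\theta)(u)=d$, since the maximal minors of $y$ carry relative-invariant weight $+1$ at the reflected (sink) vertex $u$ --- but the surrounding reasoning, including your remark distinguishing the relative-invariant weight from the module weight of $p$ as a function, is the correct mechanism.
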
 

Since \cite{skowronski-weyman} does not contain the proof of Lemma~\ref{lemma:reflection}, we present the details here. The key step is to analyze an 
isomorphism between the algebras 
$\F[\mathrm{Hom}_{\F}(\F^n,\F^m)]^{\SL(\F^m)}$ 
and $\F[\mathrm{Hom}_{\F}(\F^{n-m},\F^n)]^{\SL(\F^{n-m})}$ for $n>m$. 
Identify $\mathrm{Hom}_{\F}(\F^n,\F^m)$ with the space $\F^{m\times n}$ of $m\times n$ matrices over $\F$, and $\mathrm{Hom}_{\F}(\F^{n-m},\F^n)$ with the space 
$\F^{n\times (n-m)}$. 
The coordinate ring $A:=\F[\mathrm{Hom}_{\F}(\F^n,\F^m)]$ is the polynomial algebra 
$A=\F[x_{ij}\mid i=1,\dots,m; \ j=1,\dots,n]$, where $x_{ij}$ is the $(i,j)$-coordinate function on $\F^{m\times n}$. Similarly, $B:=\F[\mathrm{Hom}_{\F}(\F^{n-m},\F^n)]$ is the polynomial algebra 
$B=\F[y_{ij}\mid i=1,\dots,n;\ j=1,\dots,n-m]$, where $y_{ij}$ maps an $n\times (n-m)$ matrix to its $(i,j)$-entry. 
The group $\GL(\F^m)\times \GL(\F^n)$ acts on $A$ by linear substitutions of the variables: 
For $(g,h)\in \GL(\F^m)\times \GL(\F^n)$ and $x_{ij}$ we have 
that $(g,h)\cdot x_{ij}$ is the $(i,j)$-entry of $g^{-1} X h$, where 
$X$ is the generic $m\times n$ matrix with $(i,j)$-entry $x_{ij}$. 
Similarly, $\GL(\F^n)\times \GL(\F^{n-m})$ acts on $B$ by linear substitutions of the variables, namely for $(g,h)\in \GL(\F^n)\times \GL(\F^{n-m})$, 
$(g,h)\cdot y_{ij}$ is the $(i,j)$-entry of $g^{-1}Yh$ where $Y$ is the generic 
$n\times (n-m)$ matrix with $(i,j)$-entry $y_{ij}$. 
For $1\le j_1<\cdots<j_m\le n$ write $X_{j_1,\dots,j_m}$ for the determinant of the $m\times m$ minor of $X$ formed by its columns of index $j_1,\dots,j_m$, 
and write $Y_{j_1,\dots,j_m}$ for the determinant of the $(n-m)\times (n-m)$ minor of $Y$ obtained by removing from $Y$ the rows of index $j_1,\dots,j_m$.  
The First Fundamental Theorem of Classical Invariant Theory for the special linear group 
asserts that 
$A^{\SL(\F^m)}$ is generated as an $\F$-algebra by $X_{j_1,\dots,j_m}$, $1\le j_1<\cdots< j_m\le n$, and 
$B^{\SL(\F^{n-m})}$ is generated as an $\F$-algebra by $Y_{j_1,\dots,j_m}$, $1\le j_1<\cdots< j_m\le n$.   
The algebra  $A^{\SL(\F^m)}$ is graded: $A^{\SL(\F^m)}=\bigoplus_{d=0}^\infty A_d$, 
where $A_d$ stands for the $\F$-subspace spanned by products of length $d$ in the generators $X_{j_1,\dots,j_m}$. Similarly, 
$B^{\SL(\F^{n-m})}=\bigoplus_{d=0}^\infty B_d$, where $B_d$ stands for the $\F$-subspace spanned by products of length $d$ in the generators $Y_{j_1,\dots,j_m}$.

\begin{proposition}\label{prop:SL} 
For $n>m$ let $A^{\SL(\F^m)}=\bigoplus_{d=0}^\infty A_d$ and 
$B^{\SL(\F^{n-m})}=\bigoplus_{d=0}^\infty B_d$ be the graded algebras as above. 
\begin{itemize} 
\item[(i)] The map $X_{j_1,\dots,j_m}\mapsto (-1)^{j_1+\cdots+j_m}
Y_{j_1,\dots,j_m}$ ($1\le j_1<\cdots<j_m\le n$) extends to an isomorphism $\varphi:A^{\SL(\F^m)}\to B^{\SL(\F^{n-m})}$ of graded $\F$-algebras. 
\item[(ii)] The isomorphism $\varphi$ is $\SL(\F^n)$-equivariant .  
\item[(iii)] As a $GL(\F^m)$-module, $A_d$ is isomorphic to a direct sum of copies of the 
$1$-dimensional  module $\det^{-d}\vert_{\GL(\F^m)}$. 
As a $GL(\F^{n-m})$-module, $B_d$ is isomorphic to a direct sum of copies of the 
$1$-dimensional  module $\det^{d}\vert_{\GL(\F^{n-m})}$. 
\item[(iv)] As a $\GL(\F^n)$-module, $A_d$ is isomorphic to $B_d\otimes \det^d\vert_{\GL(\F^n)}$. 
\end{itemize}
\end{proposition}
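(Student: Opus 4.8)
The plan is to prove the four assertions roughly in the order (iii), (i), (ii), (iv), since each later part leans on the earlier ones. Throughout I write $J=\{j_1<\cdots<j_m\}$ for an $m$-element subset of $\{1,\dots,n\}$, $J^{c}$ for its complement, and $s_J:=(-1)^{j_1+\cdots+j_m}$, so that $Y_{j_1,\dots,j_m}=Y^{[J^{c}]}$ is the maximal $(n-m)\times(n-m)$ minor of $Y$ on the rows indexed by $J^{c}$. Part (iii) is the quickest: under the $\GL(\F^m)$-action $X\mapsto g^{-1}X$ each generator satisfies $X_{j_1,\dots,j_m}\mapsto \det(g)^{-1}X_{j_1,\dots,j_m}$, so a length-$d$ product of minors transforms by $\det^{-d}$; as $A_d$ is spanned by such products it is a sum of copies of $\det^{-d}\vert_{\GL(\F^m)}$. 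The same computation with $Y\mapsto Yh$ gives $Y_{j_1,\dots,j_m}\mapsto \det(h)\,Y_{j_1,\dots,j_m}$, whence the claim for $B_d$. I would record here the analogous computation for the central torus of $\GL(\F^n)$, to be reused in (iv): the scalar $tI_n$ acts on $A_d$ by $t^{md}$ and on $B_d$ by $t^{-(n-m)d}$.

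For (i), first note that by the First Fundamental Theorem the $X_J$ (resp. $Y_J$) generate $A^{\SL(\F^m)}$ (resp. $B^{\SL(\F^{n-m})}$), and since distinct maximal minors of a generic matrix are linearly independent, $X_J\mapsto s_J Y_J$ is a bijection of the spanning sets restricting to a linear isomorphism $A_1\to B_1$. To see that it extends to an algebra map I would present both rings as quotients of the Plücker polynomial ring $P=\F[p_J]$, via $p_J\mapsto X_J$ (kernel $I_A$) and $p_J\mapsto s_J Y_J$ (kernel $I_B$), and show $I_A=I_B$. For this I invoke the Second Fundamental Theorem in arbitrary characteristic (\cite{deconcini-procesi}): each kernel is generated by the Plücker relations of the respective Grassmannian. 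It then remains to check that the substitution $p_J\mapsto s_J p_J$ together with complementation $J\mapsto J^{c}$ carries the Plücker relations of $\mathrm{Gr}(m,n)$ to those of $\mathrm{Gr}(n-m,n)$; this is the classical self-duality of the Plücker relations under complementation, and it yields $I_A=I_B$. The resulting graded isomorphism $\varphi$ sends $X_J$ to $s_J Y_J$, and the analogously defined $\psi\colon Y_J\mapsto s_J X_J$ is its two-sided inverse since $s_J^2=1$.

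Part (ii) then follows formally: as $\varphi$ is an algebra homomorphism and $A^{\SL(\F^m)}$ is generated in degree one, it suffices to verify $\SL(\F^n)$-equivariance on $A_1$. Under the relevant $\GL(\F^n)$-actions the degree-one parts $A_1$ and $B_1$ are the exterior powers $\wedge^{m}$ and $\wedge^{n-m}$ of mutually dual copies of the standard representation of $\GL(\F^n)$, and the map $X_J\mapsto s_J Y_J$ is exactly the Hodge-star isomorphism $\wedge^{m}(\F^n)\to\wedge^{n-m}(\F^n)$, $e_J\mapsto s_J e_{J^{c}}$, which is $\SL(\F^n)$-equivariant because the pairing $\wedge^{m}\otimes\wedge^{n-m}\to\wedge^{n}=\det$ is $\SL(\F^n)$-invariant.

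Finally, for (iv) I would combine (ii) with the torus computation from (iii). By (ii), $\varphi$ restricts to an $\SL(\F^n)$-isomorphism $A_d\to B_d$, and since $\det^d$ is trivial on $\SL(\F^n)$ this is also an $\SL(\F^n)$-isomorphism $A_d\to B_d\otimes\det^d\vert_{\GL(\F^n)}$. To upgrade it to $\GL(\F^n)$, observe that the scalar $tI_n$ acts on $A_d$ by $t^{md}$ and on $B_d\otimes\det^d$ by $t^{-(n-m)d}\cdot t^{nd}=t^{md}$; since both sides scale by the same factor, any linear map commutes with $tI_n$, and as $\F$ is algebraically closed, $\GL(\F^n)$ is generated by $\SL(\F^n)$ and the scalars, so the map is $\GL(\F^n)$-equivariant. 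The main obstacle is the well-definedness in (i), i.e. the equality $I_A=I_B$: everything else is either a one-line transformation rule for minors or a formal consequence, whereas (i) rests on the Second Fundamental Theorem together with the combinatorial sign-bookkeeping showing that sign-twisted complementation is a symmetry of the Plücker relations.
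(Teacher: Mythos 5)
Your proof is correct and follows essentially the same route as the paper's: part (i) via the Pl\"ucker presentation of the two rings of $\SL$-invariants and the sign-twisted complementation symmetry of the relations, part (ii) via the $\SL(\F^n)$-invariance of the wedge pairing $\bigwedge^m\F^n\otimes\bigwedge^{n-m}\F^n\to\bigwedge^n\F^n$, and parts (iii)--(iv) from multiplicativity of the determinant. The only cosmetic difference is that the paper reads off (iv) directly from the identity $\psi(g\cdot\omega)=\det(g)\,g\cdot\psi(\omega)$ for its duality map $\psi$, whereas you obtain it from $\SL(\F^n)$-equivariance together with matching the central character $t^{md}$ on both sides; these are equivalent.
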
 

\begin{proof} 
(i) The Pl\"ucker relations generate the ideal of relations among the generators $X_{j_1,\dots,j_d}$ of $A^{\SL(\F^m)}$, and formally the same relations generate the ideal of relations among the generators $Y_{j_1,\dots,j_m}$ of $B^{\SL(\F^{n-m})}$. 
This implies the existence of the isomorphism 
$\varphi$ of $\F$-algebras with 
$\varphi(X_{j_1,\dots,j_m})=(-1)^{j_1+\cdots+j_m}Y_{j_1,\dots,j_m}$. 

(ii) 
Consider the morphism from $\mathrm{Hom}_{\F}(\F^n,\F^m)=(\F^n)^*\otimes \F^m$ 
onto the subvariety $\bigwedge^m_0(\F^n)^*$ of decomposable tensors in $\bigwedge^m(\F^n)^*$
mapping an  $m\times n$ matrix 
$M$ with rows $a_1,\dots,a_m\in \F^{1\times n}=(\F^n)^*$ to 
$a_1\wedge\dots\wedge a_m\in \bigwedge^m_0(\F^n)^*$. Its comorphism gives a $\GL(\F^n)$-equivariant 
$\F$-algebra isomorphism 
$\lambda:\F[\bigwedge^m_0(\F^n)^*]\to A^{\SL(\F^m)}$. 
Denoting by $e_1,\dots,e_n$ the standard basis in $\F^n$ and by $\varepsilon_1,\dots,\varepsilon_n$ the corresponding dual basis in $(\F^n)^*$, the above isomorphism maps 
$e_{j_1}\wedge \dots\wedge e_{j_m}$ to $X_{j_1,\dots,j_m}$. 
Similarly, consider the morphism from 
$\mathrm{Hom}_{\F}(\F^{n-m},\F^n)=(\F^{n-m})^*\otimes \F^n$ mapping an $n\times (n-m)$ matrix $L$ with columns $b_1,\dots,b_{n-m}\in \F^n$ to 
$b_1\wedge\dots\wedge b_{n-m}\in \bigwedge^{n-m}_0\F^n$. 
Its comorphism is a $\GL(\F^n)$-equivariant $\F$-algebra isomorphism 
$\mu:\F[\bigwedge^{n-m}_0\F^n]\to B^{\SL(\F^{n-m})}$, mapping 
$\varepsilon_{i_1}\wedge \dots \wedge \varepsilon_{i_{n-m}}$ to $Y_{j_1,\dots,j_m}$, where 
$i_1<\cdots <i_{n-m}$ and $\{i_1,\dots,i_{n-m}\}$ is the complement of 
$\{j_1,\dots,j_m\}$ in $\{1,\dots,n\}$. 
Define the map 
$\psi:\bigwedge^m\F^n\to (\bigwedge^{n-m}\F^n)^*=\bigwedge^{n-m}(\F^n)^*$ by setting 
\[v_1\wedge \dots\wedge v_m\wedge w_1\wedge \dots\wedge w_{n-m}=\psi(v_1\wedge\dots\wedge v_m)(w_1\wedge\dots\wedge w_{n-m})e_1\wedge \dots\wedge e_n.\] 
For $g\in \GL(\F^n)$ and $v_1\wedge \dots\wedge v_m$ we have 
\[\psi(g\cdot (v_1\wedge \dots\wedge v_m))=\det(g)g\cdot (\psi(v_1\wedge \dots\wedge v_m)).\] 
In particular, $\psi$ is an $\SL(\F^n)$-module map. 
Moreover, $\psi$ is an isomorphism between the subvarieties 
$\bigwedge^m_0\F^n$ and $\bigwedge^{n-m}_0(\F^n)^*$.  
The comorphism of $\psi$ is identified with $\varphi$ (or $-\varphi$) via the isomorphisms $\lambda$ and $\mu$. Thus $\varphi$ is $\SL(\F^n)$-equivariant, and both (ii) and (iv) hold.  

(iii) follows from multiplicativity of the determinant and the definition of the action of $\GL(\F^m)$ and $\GL(\F^{n-m})$ on $\mathrm{Hom}_{\F}(\F^n,\F^m)$ and 
$\mathrm{Hom}_{\F}(\F^{n-m},\F^n)$. 
\end{proof} 

Denote by $\Z$ the center $\{(z(v)I_{\alpha(v)}\mid v\in \qui_0)\mid z:\qui_0\to \F^{\times}\}$ 
of $\GL(\alpha)$, consisting of the elements whose components corresponding to the vertices of $\qui$ are scalar transformations. The commutator subgroup of $\GL(\alpha)$ is $\SL(\alpha)$ and we have $\GL(\alpha)=\Z\cdot \SL(\alpha)$. It follows that given a $\GL(\alpha)$-module $W$, an element $w\in W$ spans a $1$-dimensional 
$\GL(\alpha)$-invariant subspace if and only if $w$ is fixed by $\SL(\alpha)$ and $\F w$ is a $\Z$-invariant subspace. 

\bigskip
 \begin{proofof}{Lemma~\ref{lemma:reflection}}
 Let $u$ be a sink vertex as in the statement. 
 Then 
 \begin{align*} \rep(\qui,\alpha)=\mathrm{Hom}_{\F}(\F^n,\F^m)\oplus \rep(\qui',\alpha') \mbox{ and } \\ 
 \rep(\sigma_u\qui,\sigma_u\alpha)=\mathrm{Hom}_{\F}(\F^{n-m},\F^n)\oplus \rep(\qui',\alpha') 
 \end{align*} 
 where $\qui'$ is the quiver obtained from $\qui$ by omitting the vertex $u$ and the arrows adjacent to $u$,  $\alpha'$ is the restriction of $\alpha$ to $\qui'_0=\qui_0\setminus \{u\}$, 
 $\F^n:=\bigoplus_{a\in \qui_1,\ \target a=u}\F^{\alpha(\source a)}$,  
 $\F^m=\F^{\alpha(u)}$, $\F^{n-m}=\F^{\sigma_u\alpha(u)}$. 
 The algebra 
 $\semi(\qui,\alpha,\theta)$ is a subalgebra of 
 $\F[\mathrm{Hom}_{\F}(\F^n,\F^m)]^{\SL(\F^m)}\otimes \F[\rep(\qui',\alpha')]$, 
whereas 
$\semi(\sigma_u\qui,\sigma_u\alpha,\sigma_u\theta)$ is a subalgebra of 
$\F[\mathrm{Hom}_{\F}(\F^{n-m},\F^n)]^{\SL(\F^{n-m})}\otimes \F[\rep(\qui',\alpha')]$. 
 The isomorphism 
 \[\varphi:\F[\mathrm{Hom}_{\F}(\F^n,\F^m)]^{\SL(\F^m)}\to
 \F[\mathrm{Hom}_{\F}(\F^{n-m},\F^n)]^{\SL(\F^{n-m})}\] 
 from Proposition~\ref{prop:SL} (i) extends to an algebra isomorphism
 \[\tilde\varphi:=\varphi\otimes \mathrm{Id}_{\F[\rep(\qui',\alpha')]}:
 \F[\rep(\qui,\alpha)]^{\SL(\F^m)}\to \F[\rep(\sigma_u\qui,\sigma_u\alpha)]^{\SL(\F^{n-m})}.\] 
 Moreover,  the isomorphism $\tilde\varphi$ is $\SL(\alpha')$-equivariant, where we 
 consider the action of $\SL(\alpha')$ on  $\F[\mathrm{Hom}_{\F}(\F^n,\F^m)]$ 
 and on $\F[\mathrm{Hom}_{\F}(\F^{n-m},\F^n)]$ 
 induced by the action of  $\SL(\F^n)$ as follows: 
 set $\qui_0(u):=\{v\in \qui_0\mid \exists a\in \qui_1: \target a=u,\ \source a=v\}$, 
 and consider the group embedding 
 \begin{align*}\iota:\prod_{v\in \qui_0(u)}\SL(\F^{\alpha(v)})\hookrightarrow 
 \SL(\oplus_{a\in\qui_1:\target a=u}\F^{\alpha(\source a)})=\SL(\F^n), 
 \\ 
  (g_v\mid v\in \qui_0(u))\mapsto (g_{\source a}\mid a\in \qui_1,\target a=u).
  \end{align*} 
 The composition of the projection 
 $\SL(\alpha')\twoheadrightarrow \prod_{v\in \qui_0(u)}\SL(\F^{\alpha(v)})$ and $\iota$ 
 gives a homomorphism $\SL(\alpha')\to \SL(\F^n)$, and we lift to $\SL(\alpha')$ the action 
 of $\SL(\F^n)$ on $\F[\mathrm{Hom}_{\F}(\F^n,\F^m)]$ and on $\F[\mathrm{Hom}_{\F}(\F^{n-m},\F^n)]$. 
 
So $\tilde\varphi:\F[\rep(\qui,\alpha)]^{\SL(\F^m)}\to \F[\rep(\sigma_u\qui,\sigma_u\alpha)]^{\SL(\F^{n-m})}$ is an $\SL(\alpha')$-equivariant algebra isomorphism.  
Let $f$ be a relative $\GL(\alpha)$-invariant on $\rep(\qui,\alpha)$ of weight $\theta$. 
 Then $f$ is contained in $\F[\mathrm{Hom}_{\F}(\F^n,\F^m)]^{\SL(\F^m)}\otimes \F[\rep(\qui',\alpha')]$, hence $f$ is a linear combination of products of the form 
$MX_{J_1}\cdots X_{J_t}$ 
where $M\in \F[\rep(\qui',\alpha')]$, $J_1,\dots,J_t$ are strictly increasing sequences of length $m$ of integers between $1,\dots,n$, and $t=\theta(u)$  (see Proposition~\ref{prop:SL} (iii)). 
Moreover, since $Z$ is a torus, $\F[\rep(\qui',\alpha')]$ is spanned over $\F$ by relative $Z$-invariants, hence 
we may assume that the elements $M$ above are relative $Z$-invariants. 
Note that  the elements $X_{J_i}$ are also relative $Z$-invariants. 
It follows that $f$ is a linear combination of relative $Z$-invariants of weight 
$\theta\vert_Z$ of the form 
$MX_{J_1}\cdots X_{J_{\theta(u)}}$ 
(here $\theta\vert_Z$ stands for the restriction to $Z$  of the character 
$\chi_{\theta}$ of $\GL(\alpha)$; see Section~\ref{subsec:prel-semiinv}). 
We have 
\[\tilde\varphi(MX_{J_1}\cdots X_{J_t})=MY_{J_1}\cdots Y_{J_t}.\] 
For $J=(j_1,\dots,j_m)$ with $1\le j_1<\cdots<j_m\le n$, and 
$z=(z(v)I_{\alpha(v)}\mid v\in \qui_0)\in Z$  we have 
\begin{equation}\label{eq:zX_J}
\frac{z\cdot X_J}{X_J}=(\prod_{a\in \qui_1,\ \target a=u}z(\source a)^{\alpha(\source a)})\frac{z\cdot Y_J}{Y_J}.
\end{equation}
Indeed, take a vertex $w$ connected to $u$ in $\qui$, and suppose that $b_1,\dots,b_\ell$ are the arrows in $\qui$ with source $w$ and target $u$. 
For ease of notation suppose that the entries in the first $\ell\alpha(w)$ columns of the generic 
matrix $X$ form a basis in the dual space of $\mathrm{Hom}_{\F}(\oplus_{i=1}^\ell\F^{\alpha(\source b_i)},\F^{\alpha(u)})$. 
Take an element $z$ of $Z$ whose component associated with $w$ is $cI_{\alpha(w)}$ and all other components are equal to the identity matrix of the appropriate size. 
Then $z\cdot X_J=c^rX_J$, where $r$ is the number of elements in $\{1,\dots,\ell\alpha(w)\}$ that belong to $\{j_1,\dots,j_m\}$. Clearly, the number of elements in $\{1,\dots,\ell\alpha(w)\}$ that belong to the complement in $\{1,\dots,n\}$ of $\{j_1,\dots,j_m\}$ is 
$\ell\alpha(w)-r$, and $z\cdot Y_J=c^{r-\ell\alpha(w)}Y_J$. 
This holds for all vertices $w$ that are connected to $u$. 
This explains \eqref{eq:zX_J}, which 
implies in turn that 
$\tilde\varphi(f)$ is a relative $Z$-invariant of weight $\sigma_u\theta\vert_Z$. 
On the other hand, $\tilde\varphi(f)$ is $\SL(\sigma_u\alpha)$-invariant by 
Proposition~\ref{prop:SL} (ii), and hence $\tilde\varphi(f)$ is a relative $\GL(\sigma_u\alpha)$-invariant of weight $\sigma_u\theta$. Essentially the same considerations show also that the inverse of $\tilde\varphi$ maps a relative $\GL(\sigma_u\alpha)$-invariant of weight 
$\sigma_u\theta$ to a relative $\GL(\alpha)$-invariant of weight $\theta$, and that $\tilde\varphi$ gives an isomorphism between the corresponding spaces of relative invariants. 

The case when $u$ is a source vertex is handled by an obvious modification of the above argument. 
 \end{proofof} 


\section{Our reductions preserve the stability of a dimension vector}\label{sec:stable}

\begin{lemma}\label{lemma:schofield} 
Let $\alpha$ be a $\theta$-semistable dimension vector for $\qui$. If $\alpha$ is not $\theta$-stable, 
then there exists a non-zero dimension vector $\beta\le \alpha$, $\beta\neq \alpha$ such that 
$\sum_{v\in \qui_0}\theta(v)\beta(v)=0$ and 
each $x\in \rep(\qui,\alpha)$ has a subrepresentation of dimension vector $\beta$. 
\end{lemma}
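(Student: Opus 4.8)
The plan is to exploit the irreducibility of the affine space $\rep(\qui,\alpha)$ together with the fact that, for a fixed dimension vector $\beta\le\alpha$, the locus of representations admitting a subrepresentation of dimension $\beta$ is Zariski-closed. We may assume $\alpha\neq\underline 0$ (otherwise there is no admissible $\beta$). First I would record the relevant dichotomy. Since $\alpha$ is $\theta$-semistable, the set $\rep(\qui,\alpha)^{\theta\text{-sst}}$ is a nonempty Zariski-open $\GL(\alpha)$-invariant subset of $\rep(\qui,\alpha)$ (for $\theta=0$ it is everything), hence dense, as $\rep(\qui,\alpha)$ is irreducible. Because $\alpha$ is not $\theta$-stable, no representation of dimension $\alpha$ is $\theta$-stable; in particular every $x\in\rep(\qui,\alpha)^{\theta\text{-sst}}$ is a nonzero $\theta$-semistable representation that fails to be $\theta$-stable. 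For such an $x$ there is a proper nonzero subrepresentation $S$ with $\sum_{v}\theta(v)\dim_{\F}(S_v)\le 0$, which combined with semistability ($\ge 0$) forces $\sum_{v}\theta(v)\dim_{\F}(S_v)=0$. Thus every $x$ in the semistable locus has a proper nonzero subrepresentation whose dimension vector $\beta:=\underline{\dim}(S)$ satisfies $\underline 0\neq\beta\le\alpha$, $\beta\neq\alpha$, and $\sum_{v}\theta(v)\beta(v)=0$.

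Next, for each dimension vector $\beta\le\alpha$ let $S_\beta\subseteq\rep(\qui,\alpha)$ be the set of those $x$ admitting a subrepresentation of dimension $\beta$. I would prove that $S_\beta$ is closed via the standard incidence-variety construction: form the product of Grassmannians $G=\prod_{v\in\qui_0}\mathrm{Gr}(\beta(v),\F^{\alpha(v)})$, which is a projective (complete) variety, and inside $\rep(\qui,\alpha)\times G$ take the subvariety $Z_\beta$ of pairs $(x,(U_v))$ satisfying $x_a(U_{\source a})\subseteq U_{\target a}$ for all $a\in\qui_1$. This is a closed condition, so $Z_\beta$ is closed. The first projection $Z_\beta\to\rep(\qui,\alpha)$ is proper, because projecting off the complete factor $G$ is a closed morphism, and its image is exactly $S_\beta$ (a point $(U_v)$ over $x$ is precisely a subrepresentation of $x$ of dimension $\beta$). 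Properness therefore yields that $S_\beta$ is closed in $\rep(\qui,\alpha)$.

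Finally I combine these observations. The set $B:=\{\beta\mid\underline 0\neq\beta\le\alpha,\ \beta\neq\alpha,\ \sum_v\theta(v)\beta(v)=0\}$ is finite, and the dichotomy of the first paragraph gives $\rep(\qui,\alpha)^{\theta\text{-sst}}\subseteq\bigcup_{\beta\in B}S_\beta$. Being open and dense in the irreducible space $\rep(\qui,\alpha)$, the semistable locus is itself irreducible; an irreducible set contained in a finite union of closed sets lies inside one of them, so $\rep(\qui,\alpha)^{\theta\text{-sst}}\subseteq S_{\beta_0}$ for a single $\beta_0\in B$. Passing to closures and using that $S_{\beta_0}$ is closed while the semistable locus is dense gives $\rep(\qui,\alpha)=\overline{\rep(\qui,\alpha)^{\theta\text{-sst}}}\subseteq S_{\beta_0}$, i.e.\ every $x\in\rep(\qui,\alpha)$ has a subrepresentation of dimension $\beta_0$, and $\beta_0$ has all the required properties. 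The key input — and the step I expect to require the most care — is the closedness of $S_\beta$ through properness of the Grassmannian-incidence projection; once that is in place, the remainder is an irreducibility-plus-finiteness pigeonhole argument.
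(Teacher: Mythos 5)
Your argument is correct and follows essentially the same route as the paper: both cover an irreducible set (you use the dense semistable locus, the paper uses all of $\rep(\qui,\alpha)$) by the finitely many closed loci $Z_\beta$ of representations admitting a subrepresentation of dimension vector $\beta$, and conclude by irreducibility that one $Z_{\beta}$ is everything. The only difference is that you prove the closedness of $Z_\beta$ directly via the properness of the Grassmannian incidence variety, whereas the paper cites this fact from Schofield; your inlined argument is exactly the standard proof of that citation.
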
 

\begin{proof} 
For a dimension vector $\beta\le \alpha$ set 
\[Z_{\beta}:=\{x\in \rep(\qui,\alpha)\mid x \mbox{ has a subrepresentation of dimension vector }
\beta\}.\] 
 Set $B:=\{\beta\in \mathbb{N}^{\qui_0}\mid \beta\le \alpha,\ \beta\neq 0, \ 
 \beta\neq \alpha,\  
\sum_{v\in \qui_0}\theta(v)\beta(v)\le0\}$. 
The assumption that $\alpha$ is $\theta$-semistable but not $\theta$-stable means that 
$\rep(\qui,\alpha)=\bigcup_{\beta\in B}Z_{\beta}$. 
On the other hand, for each $\beta\in B$ the subset $Z_{\beta}$ is Zariski closed in $\rep(\qui,\alpha)$ by 
\cite[Lemma 3.1]{schofield} (see also \cite{crawley-boevey}). 
Thus $\rep(\qui,\alpha)=Z_{\beta}$ for some 
$\beta\in B$. Since $\alpha$ is $\theta$-semistable, we have 
$\sum_{v\in \qui_0}\theta(v)\beta(v)=0$ for this $\beta$. 
\end{proof} 

\begin{lemma} \label{lemma:stable preserved} 
Let $\qui$ be a quiver, $\alpha$ a $\theta$-semistable sincere dimension vector,   
and $u$ a large vertex or a small source or sink for $(\qui,\alpha)$.  
Write 
$\tau$ for the corresponding operation $\tau_u$ or $\sigma_u$. 
Then $\tau\alpha$ is $\tau\theta$-semistable for $\tau\qui$, 
where $\tau\theta$ is the weight given in  Lemma~\ref{lemma:shrink} 
or Lemma~\ref{lemma:reflection}.  
Moreover, if $\alpha$ is $\theta$-stable for $\qui$, then 
$\tau\alpha$ is $\tau\theta$-stable for $\tau\qui$.  
\end{lemma}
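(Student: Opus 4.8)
The plan is to handle semistability and stability separately, reducing semistability to the algebra isomorphisms already in hand and proving stability by exhibiting an explicit stable representation after the reduction. For the semistability claim I would argue purely algebraically: by King's criterion (Section~\ref{subsec:prel-semiinv}) a dimension vector is $\eta$-semistable for a nonzero weight $\eta$ exactly when $\semi(\cdot,\cdot,\eta)$ has a nonzero homogeneous component in some positive degree, while everything is $0$-semistable. The graded-algebra isomorphisms of Lemma~\ref{lemma:shrink} and Lemma~\ref{lemma:reflection} transport the existence of such a positive-degree element between $\semi(\qui,\alpha,\theta)$ and $\semi(\tau\qui,\tau\alpha,\tau\theta)$; the only case needing a word is $\tau=\tau_u$ with $\tau_u\theta=0$, and then $\tau_u\alpha$ is $0$-semistable for free (and moreover $\tau_u\theta\neq0$ already forces $\theta\neq0$).

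For stability under $\tau=\tau_u$ at a large vertex $u$, let $x$ be a $\theta$-stable representation of $(\qui,\alpha)$; I would prove that its image $\mu(x)$ under the map $\mu$ of Lemma~\ref{lemma:shrink} is $\tau_u\theta$-stable, which is exactly what is required. The structural input is that on the $\theta$-semistable locus the combined incoming matrix $x_b$ is invertible in case (a) ($\theta(u)>0$) and the combined outgoing matrix $x_c$ is invertible in case (b) ($\theta(u)<0$): by Lemma~\ref{lemma:shrink} every relative invariant of weight a multiple of $\theta$ is divisible by $\det(b)$, resp.\ $\det(c)$, so these determinants cannot vanish at a semistable point. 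Given a nonzero proper subrepresentation $T$ of $\mu(x)$, I would lift it to a subrepresentation $S$ of $x$ equal to $T$ away from $u$, with $S_u=\sum_i x_{b_i}(T_{\source b_i})$ in cases (a) and (c) and $S_u=x_c^{-1}(\bigoplus_j T_{\target c_j})$ in case (b); the subrepresentation axioms for $S$ follow at once from those for $T$, and $S$ is again nonzero and proper. A short computation with the explicit formula for $\tau_u\theta$ then gives $\sum_v\theta(v)\dim S_v=\sum_{v\neq u}\tau_u\theta(v)\dim T_v$: in case (c) this is automatic because $\theta(u)=0$ makes $\dim S_u$ irrelevant, while in cases (a) and (b) the invertibility just established fixes $\dim S_u$ to the value making the identity hold. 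Since $\theta$-stability of $x$ forces the left-hand side to be positive, every nonzero proper subrepresentation of $\mu(x)$ has positive $\tau_u\theta$-weight; applying the same identity to the whole representation shows $\sum_{v\neq u}\tau_u\theta(v)\tau_u\alpha(v)=0$, and together these say $\mu(x)$ is $\tau_u\theta$-stable.

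For $\tau=\sigma_u$ with $u$ a sink (the source case being symmetric) I would first extract two facts from the hypotheses. Stability forces $\theta(u)>0$, since the subrepresentation of $x$ supported only at $u$ is nonzero and proper (as $\alpha$ is sincere and the quiver has a second vertex) and has weight $\theta(u)\alpha(u)$; semistability then forces the combined incoming map $h\colon\F^n\to\F^m$ to be surjective, so $\dim\ker h=n-m=\sigma_u\alpha(u)$ and the reflected representation $\sigma(x)$, with $\sigma(x)_u=\ker h$ and the reversed arrows given by the coordinate projections of $\ker h\subseteq\F^n$, is a genuine representation of dimension vector $\sigma_u\alpha$. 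To a subrepresentation $S'$ of $\sigma(x)$ I would assign the subrepresentation $S$ of $x$ with $S_w=S'_w$ for $w\neq u$ and $S_u=h(\bigoplus_a S'_{\source a})$. Comparing weights via the formula for $\sigma_u\theta$ and using $\theta(u)>0$ yields the inequality $\sum_v\sigma_u\theta(v)\dim S'_v\ge\sum_v\theta(v)\dim S_v$. If $S$ is nonzero and proper this is positive by stability of $x$; the single degenerate configuration, where $S'_w=\F^{\alpha(w)}$ for all $w\neq u$ but $S'_u\subsetneq\ker h$ (so that $S$ fills all of $x$), I would dispatch by a direct calculation giving the value $\theta(u)(\sigma_u\alpha(u)-\dim S'_u)>0$. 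Thus every nonzero proper subrepresentation of $\sigma(x)$ has positive $\sigma_u\theta$-weight and $\sigma(x)$ is $\sigma_u\theta$-stable.

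I expect the reflection case to be the main obstacle. Unlike $\mu$, the reflection is not induced by a polynomial map of representation spaces with orbit fibers, and the assignment $S'\mapsto S$ is not a weight-preserving bijection of subrepresentations (the reflection functor fails to be exact), so one obtains only an inequality of weights and must separately rule out the degenerate configuration in which the associated subrepresentation of $x$ becomes improper. Pinning down the sign of $\theta(u)$ and deducing surjectivity of $h$ from semistability alone are the other points that require care; the analogous facts for $\tau_u$, namely the invertibility of $x_b$ or $x_c$, come for free from the divisibility statements in Lemma~\ref{lemma:shrink}, which is why the $\tau_u$ case is the more routine of the two.
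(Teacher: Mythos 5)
Your proof is correct, and the core constructions coincide with the paper's: pulling a subrepresentation of the transformed representation back to one of the original representation, the exact weight identity in the $\tau_u$ cases (once $x_b$, resp.\ $x_c$, is known to be invertible), and the weight inequality plus the kernel estimate in the $\sigma_u$ case. Where you genuinely differ is in the logical wrapper for the stability claim. The paper argues by contradiction: assuming $\tau\alpha$ is not $\tau\theta$-stable, it invokes Schofield's closedness result (Lemma~\ref{lemma:schofield}) to produce a single dimension vector $\beta$ of weight zero such that \emph{every} $\tau\alpha$-dimensional representation has a $\beta$-dimensional subrepresentation, and then shows that every representation in a dense open subset of $\rep(\qui,\alpha)$ fails to be $\theta$-stable, contradicting density of the stable locus. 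You instead show directly that the image under $\mu$ (resp.\ the reflected representation $\sigma(x)$) of a $\theta$-stable representation is $\tau\theta$-stable. This buys independence from Lemma~\ref{lemma:schofield} and from the genericity argument, at the cost of two extra verifications, both of which you supply correctly: that $\det(b)$, resp.\ $\det(c)$, does not vanish at the given semistable point (valid, since the semistable locus is the complement of the common zero locus of the positive-degree relative invariants, all of which your divisibility observation shows are multiples of that determinant), and the separate treatment of the degenerate subrepresentation $S'$ of $\sigma(x)$ whose pullback $S$ is all of $x$, which you dispatch with the computation $\theta(u)\bigl(\sigma_u\alpha(u)-\dim S'_u\bigr)>0$. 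One point you leave implicit and should record: the pullback $S$ is nonzero whenever $S'$ is, which for the reflection holds because the structure maps of $\sigma(x)$ out of $u$ are jointly the inclusion $\ker h\hookrightarrow\F^n$ and hence injective, so a nonzero $S'$ cannot be concentrated at $u$. It is worth noting that for case (c) of Lemma~\ref{lemma:shrink} the paper itself resorts to exactly your direct argument.
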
 

\begin{proof} 
Lemma~\ref{lemma:shrink} and Lemma~\ref{lemma:reflection} immediately imply 
that if $\semi(\qui,\alpha,\theta)$ has a non-zero homogeneous element of positive degree, 
then the same holds for $\semi(\tau\qui,\tau\alpha,\tau\theta)$, except possibly when $\tau\theta=0$, 
when all dimension vectors are semistable. 
If $\theta=0$, then $\tau\theta=0$ as well, so all dimension vectors are semistable. 
Thus the $\tau\theta$-semistability of $\tau\alpha$ folows. 

Suppose for contradiction that $\alpha$ is $\theta$-stable, but $\tau\alpha$ is not 
$\tau\theta$-stable.  By Lemma~\ref{lemma:schofield} we may choose a dimension vector 
$\beta\in \mathbb{N}^{\qui_0}$ such that 
\begin{equation}\label{eq:beta<alpha}
\beta\le \tau\alpha, \quad 0\neq \beta \neq \tau\alpha, \quad 
\sum_{v\in\tau \qui_0}\tau\theta(v)\beta(v)=0,
\end{equation} 
and all $\tau\alpha$-dimensional representation of $\tau \qui $ have a 
subrepresentation of dimension vector $\beta$.  

\medskip
\noindent Case 1: $\tau=\sigma_u$.  Assume first  that  $u$ is a small sink vertex in $\qui$. 
Since $\alpha$ is $\theta$-stable for $\qui$ and every representation of $\qui$ of dimension vector $\alpha$ has a subrepresentation of dimension vector $\varepsilon_u$, we have $\theta(u)> 0$. 
Take any representation $R$ of $\qui$ of dimension vector $\alpha$ such that 
$\bigoplus_{\target a=u}R_a:\bigoplus_{\target a=u}R_{\source a}\to R_u$ is surjective (such a representation exists by our assumption that $u$ is a small sink). Write $K$ for the kernel of  $\bigoplus_{\target a=u}R_a$. 
Denote by $R'$ the representation of $\sigma_u\qui$ such that  $R'_u=K$, 
$R'_v=R_v$ for $v\neq u$, $R'_a=R_a$ for the arrows $a$ not adjacent to $u$, 
whereas $\bigoplus_{a\in \sigma_u\qui_1,\ \source a=u}R'_a$ is the embedding of $K=R'_u$ into 
$\bigoplus_{a\in \sigma_u\qui_1, \ \source a=u}R'_{\target a}
=\bigoplus_{a\in \qui_1,\ \target a=u}R_{\source a}$. 
The surjectivity of $\bigoplus_{\target a=u}R_a:\bigoplus_{\target a=u}R_{\source a}\to R_u$ implies 
that $\underline{\dim}(R')=\sigma_u\alpha$. 
By choice of $\beta$, $R'$ has a subrepresentation 
$S'$ of dimension vector $\beta$. 
Set $S_v:=S'_v$ for $u\neq v\in \qui_0$ and 
set $S_u:=\sum_{a\in \qui_1,\ \target a=u}R_a(S'_{\source a})$.  
Now $S$ is a subrepresentation of $R$ by construction. 
Since $\theta(u)\neq 0$, \eqref{eq:beta<alpha} implies that there exists a $w\in \qui_0\setminus\{u\}$ with $\beta(w)\neq 0$ and there exists a 
$w\in \qui_0\setminus\{u\}$ with $\beta(w)<\sigma_u\alpha(w)=\alpha(w)$. 
So $S$ is a proper non-zero subrepresentation of $R$. 
We have 
\[(\bigoplus_{a\in \sigma_u\qui_1,\ \source a=u}R'_a)(S'_u)\subseteq 
\bigoplus_{a\in \sigma_u\qui_1,\ \source a=u}S'_{\target a}=\bigoplus_{a\in \qui_1,\ \target a=u}S_{\source a}\] 
since $S'$ is a subrepresentation of $R'$. 
On the other hand, 
\[(\bigoplus_{a\in \sigma_u\qui_1,\ \source a=u}R'_a)(S'_u)\subseteq 
\ker(\bigoplus_{a\in \qui_1,\ \target a=u}R_a)\]
by definition of $R'$.  
Consequently, 
\[S'_u\cong (\bigoplus_{a\in \sigma_u\qui_1,\ \source a=u}R'_a)(S'_u)
\subseteq \ker((\bigoplus_{a\in \qui_1,\ \target a=u}R_a)\big\vert_{\bigoplus_{a\in \qui_1,\target a=u}S_{\source a}}),\]
hence 
\[\beta(u)\le\dim_\F(\ker((\bigoplus_{a\in \qui_1,\ \target a=u}R_a)\big\vert_{\bigoplus_{a\in \qui_1,\target a=u}S_{\source a}}))\] 
It follows that 
\begin{align*}\dim_{\F}(S_u)=\sum_{a\in \qui_1,\ \target a=u}\dim_{\F}(S'_{\source a})-
\dim_\F(\ker((\bigoplus_{a\in \qui_1,\ \target a=u}R_a)\big\vert_{\bigoplus_{a\in \qui_1,\target a=u}S'_{\source a}}))
\\ =\sum_{a\in \qui_1,\ \target a=u}\beta(\source a)-
\dim_\F(\ker((\bigoplus_{a\in \qui_1,\ \target a=u}R_a)\big\vert_{\bigoplus_{a\in \qui_1,\target a=u}S_{\source a}}))
\\ \le -\beta(u)+\sum_{a\in \qui_1,\ \target a=u}\beta(\source a). 
\end{align*}
By $\theta(u)> 0$ we conclude 
\[\theta(u)\dim_{\F}(S_u)\le -\theta(u)\beta(u)+\theta(u)\sum_{a\in \qui_1,\ \target a=u}\beta(\source a)
. \] 
The above inequality and $\underline{\dim}(S)(v)=\beta(v)=\underline{\dim}(S')(v)$ for 
$v\neq u$ imply that 
\begin{align*} 
\sum_{v\in \qui_0}\theta(v)&\underline{\dim}(S)(v)= 
\theta(u)\dim_{\F}(S_u)+\sum_{v\in \qui_0\setminus \{u\}}\theta(v)\underline{\dim}(S)(v)
\\ &\le 
-\theta(u)\beta(u)+\theta(u)\sum_{a\in \qui_1,\ \target a=u}\beta(\source a)
+\sum_{v\in \qui_0\setminus \{u\}}\theta(v)\beta(v)
\\ &=-\theta(u)\beta(u)+
\sum_{v\in \qui_0\setminus \{u\}}
(\theta(v)+\theta(u)\cdot |\{a\in \qui_1\mid \target a=u,\ \source a=v\}|)\beta(v)
\\ &=\sum_{v\in \sigma_u\qui_0}\sigma_u\theta(v)\beta(v)=0. 
\end{align*} 
Thus $R$ has a proper non-zero subrepresentation $S$ with 
$\sum_{v\in \qui_0}\theta(v)
\underline{\dim}(S)(v)\le 0$, and consequently, $R$ is not $\theta$-stable.  
This holds for any representation $R$ of $\qui$ with dimension vector $\alpha$ for which 
$\bigoplus_{\target a=u}R_a:\bigoplus_{\target a=u}R_{\source a}\to R_u$ is surjective. 
Note that this condition holds for the representations contained in an open Zariski dense subset of $\rep(\qui,\alpha)$, so the subset of non-$\theta$-stable points in $\rep(\qui,\alpha)$ contains a Zariski open dense subset. This is a contradiction, since for the $\theta$-stable dimension vector $\alpha$ the $\theta$-stable points in $\rep(\qui,\alpha)$ constitute a Zariski dense open subset as well, and two open Zariski dense subsets of $\rep(\qui,\alpha)$ have non-empty intersection.  
The case when $\tau=\sigma_u$ and $u$ is a source vertex in $\qui$ can be dealt with by a 
similar (dual) argument. 

\medskip
\noindent Case 2: $\tau=\tau_u$. We use the notation of the proof of Lemma~\ref{lemma:shrink}. 
Suppose first that we are in case (a); that is, $\theta(u)>0$ and $\alpha(u)=\sum_{i=1}^k\alpha(\source b_i)$. 
The representation space $\rep(\qui,\alpha)$ contains the Zariski open dense subset 
\[U:=\{x\in \rep(\qui,\alpha)\mid \bigoplus_{i=1}^kx_{b_i}:\bigoplus_{i=1}^kx_{\source b_i}\to x_u 
\mbox{ is an isomorphism}\}.\] 
Denote by $\mu:\rep(\qui,\alpha)\to \rep(\tau\qui,\tau\alpha)$ the surjective map given in 
the proof of Lemma~\ref{lemma:shrink}. 
Pick some $x\in U$. 
By choice of $\beta$, $\mu(x)$ has a subrepresentation $S'$ with $\underline{\dim}(S')=\beta$. 
Set $S_v:=S'_v$ for 
$v\in \qui_0\setminus\{u\}$ and $S_u:=\sum_{i=1}^kx_{b_i}(S_{\source b_i})$. 
Then $S$ is  a subrepresentation of $x$, since 
\[x_{c_j}(x_{b_i}(S_{\source b_i}))=
\mu(x)_{d_{i,j}}(S'_{\source d_{i,j}})\subseteq S'_{\target d_{i,j}}=S_{\target c_j}.\]  
As $x$ belongs to $U$, the maps $x_{b_i}:x_{\source b_i}\to x_u$ are injective and  
\begin{align}\label{eq:dim(Su)} 
\dim_{\F}(S_u)&=\dim_{\F}(\sum_{i=1}^kx_{b_i}(S_{\source b_i}))
=\dim_{\F}((\bigoplus_{i=1}^kx_{b_i})(\bigoplus_{i=1}^kS_{\source b_i}))
\\ \notag &=\sum_{i=1}^k\dim_{\F}(S_{\source b_i})
=\sum_{i=1}^k\dim_{\F}(S'_{\source b_i})
=\sum_{i=1}^k  \beta(\source b_i), 
\\ \notag \mbox{whereas } & \dim_{\F}(S_v)=\dim_{\F}(S'_v)=\beta(v).
\end{align} 
We have 
\begin{align*} 
0&=\sum_{v\in \tau_u\qui_0}\tau_u\theta(v)\beta(v)
\\ &=\sum_{v\in \qui_0\setminus\{u\}}(\theta(v)+
\theta(u)\cdot |\{b\in \qui_1\mid \target b=u,\ \source b=v\}|)\beta(v)
\\ &=\sum_{v\in \qui_0\setminus\{u\}}\theta(v)\beta(v)+
\theta(u)\cdot \sum_{v\in \qui_0\setminus\{u\}} |\{b\in \qui_1\mid \target b=u,\ \source b=v\}|\beta(v)
\\ &= \theta(u)\sum_{i=1}^k\beta(\source b_i)+\sum_{v\in \qui_0\setminus\{u\}}\theta(v)\beta(v)
\\ &=\sum_{v\in \qui_0}\theta(v)\underline{\dim}(S)(v)
\end{align*} 
(the last equality above holds by \eqref{eq:dim(Su)}).  
Note that 
$\underline{\dim}(S)\vert_{\qui_0\setminus \{u\}}
=\underline{\dim}(S')=\beta$, 
so $S$ is a proper non-zero subrepresentation of 
$R$ with 
$\sum_{v\in \qui_0}\theta(v)\underline{\dim}(S)(v)=0$. 
It follows that no $x\in U$ is $\theta$-stable. 
Since $U$ is Zariski open dense in $\rep(\qui,\alpha)$, this contradicts the assumption that $\alpha$ is a $\theta$-stable dimension vector. 
The argument for the case (b) of Lemma~\ref{lemma:shrink} is essentially the same. 
In case (c) of Lemma~\ref{lemma:shrink}, one obviously has that  
if $x\in \rep(\qui,\alpha)$ is $\theta$-stable, then $\mu(x)\in \rep(\tau_u\qui,\tau_u\alpha)$ is 
$\tau_u\theta$-stable. 
\end{proof}


\section{Dimension vectors in the fundamental set} \label{sec:fundamental set} 

For this section we fix a wild connected quiver  $\qui$, and we fix a sincere dimension vector
$\alpha\in\mathbb{N}^{\qui_0}$  (i.e. $\alpha(v)>0$ for all $v\in \qui_0$). We shall assume that $\alpha$ belongs to the \emph{fundamental set} $F_{\ugqui}$.  
Recall that by definition of the Cartan form we have 
\begin{equation}\label{eq:alphaepsilon}(\alpha,\varepsilon_v)_{\ugqui}=2\alpha(v)-\sum_{a\in \qui_1,\ \source a=v}\alpha(\target a)
-\sum_{a\in \qui_1, \ \target a=v}\alpha(\source a),\end{equation}  
so the assumption $\alpha\in F_{\ugqui}$ means that 
\[2\alpha(v)\le \sum_{a\in \qui_1,\ \source a=v}\alpha(\target a)
+\sum_{a\in \qui_1, \ \target a=v}\alpha(\source a) 
\quad \mbox{ for all }v\in \qui_0.\]
Set 
\[\ugqui^{\alpha,-}_0:=\{v\in \ugqui_0\mid (\alpha,\varepsilon_v)_{\ugqui}<0\}.\] 
Denote by $\ugqui^{\alpha,+}$ the full subgraph of $\ugqui$ spanned by the vertices in 
\[\ugqui^{\alpha,+}_0:=\ugqui_0\setminus \ugqui^{\alpha,-}_0=
\{v\in \ugqui_0\mid (\alpha,\varepsilon_v)_{\ugqui}=0\};\] 
that is, the vertex set of $\ugqui^{\alpha,+}$ is $\ugqui^{\alpha,+}_0$, and the edge set 
of $\ugqui^{\alpha,+}$ is 
\[\ugqui^{\alpha,+}_1:=\{a\in \qui_1\mid \source a,\target a\in \ugqui^{\alpha,+}_0\}. \] 

\begin{lemma}\label{lemma:free vertex} 
Let $\Delta$ be a  subgraph of $\ugqui$, and $v$ a vertex of $\Delta$.  
We have the inequality 
\[(\alpha,\varepsilon_v)_{\ugqui}\le (\alpha\vert_{\Delta},\varepsilon_v)_{\Delta} \] 
with equality if and only if all edges of $\ugqui$ adjacent to $v$ belong to $\Delta$.  
\end{lemma}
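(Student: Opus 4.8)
The plan is to unwind the definitions of the Cartan form on the two graphs and compare the arrow-sums term by term. Recall from \eqref{eq:alphaepsilon} that
\[
(\alpha,\varepsilon_v)_{\ugqui}=2\alpha(v)-\sum_{\substack{a\in \qui_1\\ \source a=v}}\alpha(\target a)-\sum_{\substack{a\in \qui_1\\ \target a=v}}\alpha(\source a),
\]
and exactly the same formula holds for $(\alpha\vert_{\Delta},\varepsilon_v)_{\Delta}$ with $\qui_1$ replaced by the arrow set of $\Delta$ (recall $\Delta$ is a subgraph of $\ugqui$, so its edges form a subset of the edges of $\ugqui$, and $\alpha\vert_{\Delta}$ agrees with $\alpha$ on the vertices of $\Delta$). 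The diagonal term $2\alpha(v)$ is identical in both expressions, since $v$ is a vertex of $\Delta$ and $\alpha\vert_{\Delta}(v)=\alpha(v)$. Hence the difference between the two sides comes only from the arrow-sum terms.

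First I would write the difference explicitly:
\[
(\alpha\vert_{\Delta},\varepsilon_v)_{\Delta}-(\alpha,\varepsilon_v)_{\ugqui}
=\sum_{\substack{a\in \qui_1\setminus \Delta_1\\ \source a=v}}\alpha(\target a)+\sum_{\substack{a\in \qui_1\setminus \Delta_1\\ \target a=v}}\alpha(\source a),
\]
where $\Delta_1$ denotes the edge set of $\Delta$ (interpreted as a subset of $\ugqui_1$). Each arrow $a$ appearing in these two sums is an edge of $\ugqui$ adjacent to $v$ that does not belong to $\Delta$, and the corresponding summand is $\alpha$ evaluated at the other endpoint of $a$. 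Because $\alpha$ is sincere (we fixed $\alpha(w)>0$ for all $w\in\qui_0$ at the start of the section), every such summand is strictly positive. Therefore the whole right-hand side is a sum of strictly positive terms, hence $\geq 0$, which gives the claimed inequality $(\alpha,\varepsilon_v)_{\ugqui}\le(\alpha\vert_{\Delta},\varepsilon_v)_{\Delta}$.

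For the equality characterization, the sum on the right is zero precisely when there are no arrows $a\in\qui_1\setminus\Delta_1$ adjacent to $v$ (since every potential summand is strictly positive, the sum vanishes if and only if it is empty). That is exactly the condition that every edge of $\ugqui$ adjacent to $v$ already belongs to $\Delta$. This establishes the "if and only if." I do not anticipate a genuine obstacle here; the only point requiring a moment of care is bookkeeping the identification of edges of the subgraph $\Delta$ with a subset of $\ugqui_1$ and making sure the endpoint weights $\alpha(\source a)$, $\alpha(\target a)$ are read off correctly for each adjacency type (arrows into $v$ versus arrows out of $v$). Sincerity of $\alpha$ is what upgrades the inequality to a strict comparison and drives the equality case, so I would flag explicitly where that hypothesis is used.
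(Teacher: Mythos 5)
Your proof is correct and is essentially identical to the paper's: both compute the difference $(\alpha\vert_{\Delta},\varepsilon_v)_{\Delta}-(\alpha,\varepsilon_v)_{\ugqui}$ as the sum of $\alpha$ at the far endpoints of the edges of $\ugqui$ adjacent to $v$ not lying in $\Delta$, and conclude by sincerity of $\alpha$. Your explicit flagging of where sincerity enters is a welcome clarification of the paper's terser "this clearly implies the statement."
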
 

\begin{proof} 
Denote by $E(v)$ the set of edges in $\ugqui$ that are connected to $v$, and are not contained in $\Delta$. 
We conclude from \eqref{eq:alphaepsilon} that 
\[(\alpha\vert_{\Delta},\varepsilon_v)_{\Delta}-(\alpha,\varepsilon_v)_{\ugqui}= 
\sum_{a\in E(v),\ \source a=v}\alpha(\target a)
+\sum_{a\in E(v),\ \target a=v}\alpha(\source a).\] 
This clearly implies the statement. 
\end{proof}

\begin{lemma}\label{lemma:dynkin} 
The graph $\ugqui^{\alpha,+}$ is the disjoint union of Dynkin graphs. 
\end{lemma}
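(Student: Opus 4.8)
The plan is to reduce to a single connected component of $\ugqui^{\alpha,+}$, feed the restriction of $\alpha$ into Lemma~\ref{lemma:free vertex} to get a \emph{positive} vector with non-negative Cartan pairings, and then invoke the standard trichotomy (finite / affine / indefinite type) for connected graphs. First I would fix a connected component $\Delta_0$ of $\ugqui^{\alpha,+}$. For each vertex $v$ of $\Delta_0$ we have $(\alpha,\varepsilon_v)_{\ugqui}=0$ by the definition of $\ugqui^{\alpha,+}_0$, so applying Lemma~\ref{lemma:free vertex} with $\Delta=\Delta_0$ gives
\[0=(\alpha,\varepsilon_v)_{\ugqui}\le (\alpha\vert_{\Delta_0},\varepsilon_v)_{\Delta_0}.\]
Writing $\beta:=\alpha\vert_{\Delta_0}$, the vector $\beta$ is strictly positive on $\Delta_0$ (since $\alpha$ is sincere) and satisfies $(\beta,\varepsilon_v)_{\Delta_0}\ge 0$ for all vertices $v$ of $\Delta_0$; equivalently $C_{\Delta_0}\beta\ge 0$ componentwise.

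Next I would invoke the classification of connected (indecomposable) symmetric generalized Cartan matrices, \cite[Theorem 4.3]{kac} (see also \cite{vinberg}): $\Delta_0$ is of finite type (Dynkin), affine type (extended Dynkin), or indefinite type, and in the indefinite case the only vector $v\ge 0$ with $C_{\Delta_0}v\ge 0$ is $v=0$. Since $\beta\neq 0$ and $C_{\Delta_0}\beta\ge 0$, the component $\Delta_0$ cannot be of indefinite type. Hence $\Delta_0$ is Dynkin or extended Dynkin.

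To finish I would rule out the extended Dynkin (affine) case using the connectivity and wildness of $\qui$. In the affine case the same theorem asserts that $C_{\Delta_0}\beta\ge 0$ forces $C_{\Delta_0}\beta=0$, i.e. $(\alpha\vert_{\Delta_0},\varepsilon_v)_{\Delta_0}=0$ for every vertex $v$ of $\Delta_0$. Combined with $(\alpha,\varepsilon_v)_{\ugqui}=0$, this places us in the equality case of Lemma~\ref{lemma:free vertex}, so every edge of $\ugqui$ incident to $v$ already lies in $\Delta_0$. Ranging over all vertices $v$ of $\Delta_0$ shows that $\Delta_0$ is a union of connected components of $\ugqui$; as $\ugqui$ is connected, $\Delta_0=\ugqui$, which would make $\ugqui$ extended Dynkin and contradict the standing assumption that $\qui$ is wild. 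Therefore every connected component of $\ugqui^{\alpha,+}$ is Dynkin, as claimed.

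The main obstacle is pinning down exactly the right form of the trichotomy, namely the sign characterizations of the indefinite and affine cases (that $C_{\Delta_0}\beta\ge 0$ with $\beta\gneq 0$ excludes indefinite type and, in affine type, forces $C_{\Delta_0}\beta=0$), rather than merely using positive-(semi)definiteness of the Tits form; once this is in hand, the remaining combinatorial step through Lemma~\ref{lemma:free vertex} and connectivity is routine.
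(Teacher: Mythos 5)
Your proof is correct and follows essentially the same route as the paper: restrict to a connected component, apply Lemma~\ref{lemma:free vertex} to get $(\beta,\varepsilon_v)_{\Delta}\ge 0$ with $\beta=\alpha\vert_{\Delta}$ positive, invoke the Vinberg--Kac trichotomy to exclude indefinite type, and use connectivity plus wildness of $\qui$ to exclude affine type. The only (cosmetic) difference is in the last step: the paper exhibits a tied vertex $v$ where the inequality of Lemma~\ref{lemma:free vertex} is strict, directly ruling out the affine case, whereas you argue by contradiction from the equality case of that lemma to conclude $\Delta=\ugqui$ --- these are contrapositive formulations of the same argument.
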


\begin{proof} 
Let $\Delta$ be a connected component of $\ugqui^{\alpha,+}$, and 
$\beta:=\alpha\vert_{\Delta}$. By Lemma~\ref{lemma:free vertex} for each $v\in \Delta_0$ we have 
\begin{align}\label{eq:beta,epsilon} 
(\beta,\varepsilon_v)_{\Delta}\ge (\alpha,\varepsilon_v)_{\ugqui} \ge 0. 
\end{align}
It follows that the Cartan matrix $C_{\Delta}$ does not fall under case (N) in  \cite[Lemma 1.2]{kac}, hence  
$\Delta$ is a Dynkin or extended Dynkin graph 
(see for example \cite[Section 1.1]{kac}  or \cite[Section 1.3]{kraft-riedtmann}). 
Moreover, since $\qui$ is connected and wild, $\Delta$ must have a vertex $v$ connected with some vertex in $\ugqui^{\alpha,-}_0$. For such a $v$ the first inequality in \eqref{eq:beta,epsilon} is strict, so we have $(\beta,\varepsilon_v)_{\Delta}>0$. Consequently, the Cartan matrix $C_{\Delta}$ does not fall under case (Z) in \cite[Lemma 1.2]{kac}.  Thus $\Delta$ is a Dynkin graph. 
\end{proof}

We introduce some ad hoc terminology. 
A vertex of $\ugqui^{\alpha,+}$ is called \emph{tied} if it is connected by an edge in $\ugqui$ to a vertex in $\ugqui^{\alpha,-}_0$. A vertex of $\ugqui^{\alpha,+}$ is called \emph{free} if it is not connected by an edge in $\ugqui$ to a vertex in $\ugqui^{\alpha,-}_0$.

\begin{lemma}\label{lemma:bounds} 
\begin{itemize}
\item[(i)] We have $|\ugqui^{\alpha,-}_0|\le-2 \langle\alpha,\alpha\rangle_{\qui}$.  
\item[(ii)] For each $v\in \ugqui^{\alpha,-}_0$ we have $\alpha(v)\le -2 \langle\alpha,\alpha\rangle_{\qui}$.  
\item[(iii)] For each $v\in \ugqui^{\alpha,-}_0$ we have $\deg_{\ugqui}(v)\le -6 \langle\alpha,\alpha\rangle_{\qui}$.  
\item[(iv)] There are at most $12 \langle\alpha,\alpha\rangle_{\qui}^2$ tied vertices in $\ugqui^{\alpha,+}_0$. 
\item[(v)] For each tied vertex $w\in \ugqui^{\alpha,+}_0$  we have $\alpha(w)\le -6 \langle\alpha,\alpha\rangle_{\qui}$. 
\item[(vi)] If $w\in \ugqui_0$ is connected by an edge of $\ugqui$ to a vertex $v\in \ugqui^{\alpha,+}_0$, then $\alpha(w)\le 2\alpha(v)$. 
\item[(vii)] We have $|\ugqui_0|\le |\ugqui^{\alpha,+}_0|-2\langle\alpha,\alpha\rangle_{\qui}$ and 
\[|\ugqui_1|\le |\ugqui^{\alpha,+}_1|-6\langle\alpha,\alpha\rangle_{\qui}|\ugqui^{\alpha,-}_0|\le 
|\ugqui^{\alpha,+}_1|+12 \langle\alpha,\alpha\rangle_{\qui}^2.\]
\end{itemize}
\end{lemma}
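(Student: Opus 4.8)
The plan is to prove the seven bounds in sequence, since each later estimate draws on the earlier ones. The fundamental quantity driving everything is $-\langle\alpha,\alpha\rangle_{\qui}$, so I would first record that since $\langle\alpha,\alpha\rangle_{\qui}=\tfrac12(\alpha,\alpha)_{\ugqui}=\tfrac12\sum_{v}\alpha(v)(\alpha,\varepsilon_v)_{\ugqui}$, and each summand with $v\in\ugqui_0^{\alpha,-}$ contributes a strictly negative term while the summands with $v\in\ugqui_0^{\alpha,+}$ vanish (by the very definition of $\ugqui_0^{\alpha,+}$), we get the clean identity
\[
-2\langle\alpha,\alpha\rangle_{\qui}=-\sum_{v\in\ugqui_0^{\alpha,-}}\alpha(v)(\alpha,\varepsilon_v)_{\ugqui}
=\sum_{v\in\ugqui_0^{\alpha,-}}\alpha(v)\,|(\alpha,\varepsilon_v)_{\ugqui}|.
\]
This single formula is the engine: every term in it is a product of positive integers, so $-2\langle\alpha,\alpha\rangle_{\qui}$ dominates any individual factor.

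**Parts (i)--(iii): local bounds at negative vertices.** From the boxed identity, (i) is immediate: each term $\alpha(v)|(\alpha,\varepsilon_v)_{\ugqui}|\ge 1$ since $\alpha(v)\ge 1$ (sincere) and $|(\alpha,\varepsilon_v)_{\ugqui}|\ge 1$ (integer, strictly negative), so the number of summands is at most $-2\langle\alpha,\alpha\rangle_{\qui}$. For (ii), fix $v\in\ugqui_0^{\alpha,-}$; isolating its term gives $\alpha(v)\le\alpha(v)|(\alpha,\varepsilon_v)_{\ugqui}|\le -2\langle\alpha,\alpha\rangle_{\qui}$. For (iii), I would use the degree estimate: from \eqref{eq:alphaepsilon}, $\deg_{\ugqui}(v)$ is bounded by the number of edges at $v$, and since each neighbor $w$ contributes $\alpha(w)\ge 1$ to the sum $\sum_{a}\alpha(\cdot)$, we have $\deg_{\ugqui}(v)\le \sum_{a:\,\source a=v}\alpha(\target a)+\sum_{a:\,\target a=v}\alpha(\source a)=2\alpha(v)-(\alpha,\varepsilon_v)_{\ugqui}=2\alpha(v)+|(\alpha,\varepsilon_v)_{\ugqui}|$. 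Bounding $\alpha(v)\le-2\langle\alpha,\alpha\rangle_{\qui}$ from (ii) and $|(\alpha,\varepsilon_v)_{\ugqui}|\le-2\langle\alpha,\alpha\rangle_{\qui}$ crudely (it is one term of the sum in the identity), this gives $\deg_{\ugqui}(v)\le -6\langle\alpha,\alpha\rangle_{\qui}$.

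**Parts (iv)--(vi): controlling the tied region.** Part (iv) follows by double counting: every tied vertex is adjacent to some $v\in\ugqui_0^{\alpha,-}$, so the tied vertices number at most $\sum_{v\in\ugqui_0^{\alpha,-}}\deg_{\ugqui}(v)$, which by (i) and (iii) is at most $(-2\langle\alpha,\alpha\rangle_{\qui})(-6\langle\alpha,\alpha\rangle_{\qui})=12\langle\alpha,\alpha\rangle_{\qui}^2$. For (vi) I would invoke the $F_{\ugqui}$-condition at a vertex $v\in\ugqui_0^{\alpha,+}$, where $(\alpha,\varepsilon_v)_{\ugqui}=0$ forces $2\alpha(v)=\sum_{a:\,\source a=v}\alpha(\target a)+\sum_{a:\,\target a=v}\alpha(\source a)$; since every summand is a nonnegative $\alpha(\cdot)$ and $\alpha(w)$ appears (at least once) for each neighbor $w$ of $v$, we get $\alpha(w)\le 2\alpha(v)$. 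Then (v) chains (vi) with (ii): a tied vertex $w$ is adjacent to some $v\in\ugqui_0^{\alpha,-}$, and applying (vi) with the roles suitably arranged — or more directly the same $F$-identity at $w\in\ugqui_0^{\alpha,+}$ together with $\alpha(v)\le-2\langle\alpha,\alpha\rangle_{\qui}$ — yields $\alpha(w)\le 2\alpha(v)\le -6\langle\alpha,\alpha\rangle_{\qui}$; the anticipated subtlety is getting the direction of (vi) right (it bounds a neighbor in terms of a $\ugqui_0^{\alpha,+}$-vertex, so I must apply it at the tied vertex $w$ and bound its $\ugqui_0^{\alpha,-}$-neighbor, using that $w$ being in $\ugqui_0^{\alpha,+}$ is what licenses the equality).

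**Part (vii): assembling the global vertex and edge counts.** The vertex bound is a partition: $|\ugqui_0|=|\ugqui_0^{\alpha,+}|+|\ugqui_0^{\alpha,-}|\le|\ugqui_0^{\alpha,+}|-2\langle\alpha,\alpha\rangle_{\qui}$ by (i). For the edge count I would classify each edge of $\ugqui$ by its endpoints: edges with both endpoints in $\ugqui_0^{\alpha,+}$ are exactly the $|\ugqui_1^{\alpha,+}|$ edges of $\ugqui^{\alpha,+}$, and every remaining edge has at least one endpoint in $\ugqui_0^{\alpha,-}$. The number of the latter is at most $\sum_{v\in\ugqui_0^{\alpha,-}}\deg_{\ugqui}(v)\le -6\langle\alpha,\alpha\rangle_{\qui}\,|\ugqui_0^{\alpha,-}|$ by (iii), giving the first inequality; then $|\ugqui_0^{\alpha,-}|\le-2\langle\alpha,\alpha\rangle_{\qui}$ from (i) converts this to the final $+12\langle\alpha,\alpha\rangle_{\qui}^2$ bound. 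The main obstacle I anticipate is not any single estimate — each is elementary once the boxed identity is in hand — but rather keeping the counting honest in (iv) and (vii), where a naive sum of degrees can double-count vertices or edges shared between several negative vertices; since these bounds are inequalities (upper bounds), the overcounting is harmless and in fact gives exactly the stated crude constants, so I would simply note that double-counting only weakens the inequality in the correct direction.
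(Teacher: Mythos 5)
Your parts (i)--(iv), (vi) and (vii) are correct and follow essentially the same route as the paper: the identity $-2\langle\alpha,\alpha\rangle_{\qui}=\sum_{v}(-(\alpha,\varepsilon_v)_{\ugqui})\alpha(v)$ with all summands nonnegative, the estimate $\deg_{\ugqui}(v)\le 2\alpha(v)-(\alpha,\varepsilon_v)_{\ugqui}$ coming from sincerity, and the double counting for (iv) and (vii). However, part (v) as written has a genuine gap. Both justifications you offer for the inequality $\alpha(w)\le 2\alpha(v)$ (with $w$ tied and $v$ its neighbour in $\ugqui^{\alpha,-}_0$) fail. The condition $\alpha\in F_{\ugqui}$ at the vertex $v$ gives $2\alpha(v)\le \sum_{a:\,\source a=v}\alpha(\target a)+\sum_{a:\,\target a=v}\alpha(\source a)$, i.e.\ a \emph{lower} bound on the neighbour sum, so it cannot bound a single neighbour's value from above by $2\alpha(v)$; indeed $\alpha(w)>2\alpha(v)$ is perfectly possible when $(\alpha,\varepsilon_v)_{\ugqui}<0$ (statement (vi) is only valid because the pivot vertex lies in $\ugqui^{\alpha,+}_0$, where equality holds). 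Your alternative route, the equality $(\alpha,\varepsilon_w)_{\ugqui}=0$ at the tied vertex $w$, yields $\alpha(v)\le 2\alpha(w)$ --- the wrong direction --- and an upper bound on $\alpha(w)$ from that equality would require controlling \emph{all} neighbours of $w$, including those in $\ugqui^{\alpha,+}_0$ that are not yet bounded.

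The repair is short and uses exactly the inequality you already exploited in (iii): for $v\in\ugqui^{\alpha,-}_0$ and any neighbour $w$ of $v$, since every summand in the neighbour sum at $v$ is positive and $\alpha(w)$ occurs among them,
\[
\alpha(w)\;\le\;\sum_{a:\,\source a=v}\alpha(\target a)+\sum_{a:\,\target a=v}\alpha(\source a)\;=\;2\alpha(v)-(\alpha,\varepsilon_v)_{\ugqui}.
\]
Now $-(\alpha,\varepsilon_v)_{\ugqui}\le -(\alpha,\varepsilon_v)_{\ugqui}\,\alpha(v)\le -2\langle\alpha,\alpha\rangle_{\qui}$ and $\alpha(v)\le -2\langle\alpha,\alpha\rangle_{\qui}$ by (ii), giving $\alpha(w)\le -6\langle\alpha,\alpha\rangle_{\qui}$. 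This is precisely the paper's argument (its inequality bounding a neighbour's dimension via $-(\alpha,\varepsilon_v)_{\ugqui}\ge -2\alpha(v)+\alpha(w)$), and it is the statement actually needed for every tied vertex.
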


\begin{proof} We have 
\begin{equation}\label{eq:alphaalpha} 
-2 \langle\alpha,\alpha\rangle_{\qui}= -(\alpha,\alpha)_{\ugqui}=\sum_{v\in \ugqui_0}-(\alpha,\varepsilon_v)_{\ugqui}\cdot \alpha(v).
\end{equation}
As $\alpha\in F_{\ugqui}$, each summand on the right hand side of \eqref{eq:alphaalpha} is non-negative, and the summands 
corresponding to $v\in \ugqui^{\alpha,-}_0$ are positive, implying that 
\[\sum_{v\in \ugqui^{\alpha,-}_0}\alpha(v)\le -2 \langle\alpha,\alpha\rangle_{\qui}.\] 
Recall that $\alpha$ is a sincere dimension vector, hence each summand on the left hand side above is at least $1$, and the number of summands is 
$|\ugqui^{\alpha,-}_0|$, hence (i) holds. 
Moreover, the summand corresponding to $v\in \ugqui^{\alpha,-}_0$ is at least 
$\alpha(v)$, so by \eqref{eq:alphaalpha} we get $\alpha(v)\le -2 \langle\alpha,\alpha\rangle_{\qui}$, thus (ii) is shown.  
By \eqref{eq:alphaalpha} for each $v\in \ugqui_0$ we have 
\begin{equation} \label{eq:2alphaalpha} 
-2 \langle\alpha,\alpha\rangle_{\qui}  \ge -(\alpha,\varepsilon_v)_{\ugqui}\cdot \alpha(v)\ge 
 -(\alpha,\varepsilon_v)_{\ugqui}. 
 \end{equation} 
 By  \eqref{eq:alphaepsilon} for any $v\in \ugqui_0$ we have 
 \begin{equation} \label{eq:2alphaepsilon} 
 -(\alpha,\varepsilon_v)_{\ugqui} 
  \ge -2\alpha(v)+\deg_{\ugqui}(v), 
  \end{equation} 
  and if $w\in \ugqui_0$ is connected  to $v$ by an edge in $\ugqui$, we also have  
\begin{equation} \label{eq:3alphaepsilon} 
-(\alpha,\varepsilon_v)_{\ugqui} 
  \ge -2\alpha(v)+\alpha(w). 
\end{equation}  
Combining (ii), \eqref{eq:2alphaalpha} and \eqref{eq:2alphaepsilon} 
for $v\in \ugqui^{\alpha,-}_0$ we get that 
\[ -2 \langle\alpha,\alpha\rangle_{\qui} \ge 4\langle\alpha,\alpha\rangle_{\qui}+\deg_{\ugqui}(v),\] 
so (iii) holds. 
Similarly, combining (ii), \eqref{eq:2alphaalpha} and \eqref{eq:3alphaepsilon} 
for $v\in \ugqui^{\alpha,-}_0$ and $w\in \ugqui_0$ connected to $v$ by an edge in $\ugqui$ 
we get that  
\[ -2 \langle\alpha,\alpha\rangle_{\qui} \ge 4\langle\alpha,\alpha\rangle_{\qui}+\alpha(w),\] 
so (v) holds. 
The number of tied vertices is obviously bounded by the number of edges adjacent to a vertex in $\ugqui^{\alpha,-}_0$, so 
statement (iv) is an immediate consequence of (i) and (iii). 
For $v\in \ugqui^{\alpha,+}_0$ we have $(\alpha,\epsilon_v)_{\ugqui}=0$, 
so \eqref{eq:3alphaepsilon} yields 
\[0=-(\alpha,\varepsilon_v)_{\ugqui} 
  \ge -2\alpha(v)+\alpha(w),\] 
thus (vi) holds. The bound for $|\ugqui_0|$ in (vii) follows from the obvious equality 
$|\ugqui_0|=|\ugqui^{\alpha,+}_0|+|\ugqui^{\alpha,-}_0|$ and (i). 
To see the bound for $|\ugqui_1|$ in (vii) note that any edge in 
$\ugqui_1\setminus \ugqui^{\alpha,+}_1$ is connected to a vertex of $\ugqui^{\alpha,-}_0$, and the number of such arrows can be bounded by (i) and (iii). 
\end{proof}

\begin{remark}\label{remark:d>1} (i) Since $\qui$ is wild, 
Lemma~\ref{lemma:bounds} (i) implies $\langle\alpha,\alpha\rangle_{\qui}<0$. 

(ii) The numbers $|\ugqui^{\alpha,+}_0|$ and $|\ugqui^{\alpha,+}_1|$ can be arbitrarily large for a fixed value of $\langle\alpha,\alpha\rangle_{\qui}$. 
\end{remark}

Note that any connected subgraph of a Dynkin graph is a Dynkin graph. 
Every Dynkin graph is a tree. The graphs $E_6$, $E_7$, $E_8$, and $D_n$ ($n\ge 4$) 
have a vertex of degree $3$, three vertices of degree $1$, and their remaining vertices have degree $2$. The graphs $A_n$ ($n\ge 2$) have two vertices of degree $1$ and $n-2$ vertices of degree $2$. Removing a degree $d\in \{2,3\}$ vertex $v$ from a Dynkin graph 
$\Delta$ it splits into $d$ connected components. Adding to a component the edge connecting it to $v$ we obtain connected subgraphs $\Delta_1,\dots,\Delta_d$ of $\Delta$, 
such that $\Delta$ is their union, and for $i\neq j$, $\Delta_i$ and $\Delta_j$ have one common vertex, namely $v$. Note that $\Delta_1,\dots,\Delta_d$ are Dynkin graphs, because any connected subgraph of a Dynkin graph is a Dynkin graph. 

The above observations imply that the tied vertices decompose $\ugqui^{\alpha,+}$ as the union 
of connected subgraphs $\Delta^{(i)}$ ($i=1,\dots,\kappa_{\ugqui,\alpha}$) satisfying that 
\begin{enumerate} 
\item each $\Delta^{(i)}$ is a Dynkin graph; 
\item each $\Delta^{(i)}$ has a vertex $v^{(i)}$  with $\deg_{\Delta^{(i)}}(v^{(i)})\le 1$ 
and $v^{(i)}$ is a tied vertex; 
\item all the vertices $v$ of $\Delta^{(i)}$ with $\deg_{\Delta^{(i)}}(v)\in \{2,3\}$ are free in 
$\ugqui^{\alpha,+}$; 
\item for $i\neq j$, the subgraphs $\Delta^{(i)}$ and $\Delta^{(j)}$ are either disjoint or they have  exactly one common vertex, and this common vertex is a tied vertex of 
$\ugqui^{\alpha,+}$. 
\end{enumerate} 

\begin{lemma}\label{lemma:number of Gamma^i} 
The number $\kappa_{\ugqui,\alpha}$ of the subgraphs $\Delta^{(i)}$  of $\ugqui^{\alpha,+}$ 
introduced above is at most  
$36 \langle\alpha,\alpha\rangle_{\qui}^2$. 
\end{lemma}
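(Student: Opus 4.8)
The plan is to bound $\kappa_{\ugqui,\alpha}$ by double counting the incidences between the subgraphs $\Delta^{(i)}$ and the tied vertices lying on them, the whole argument resting on the observation that inside $\ugqui^{\alpha,+}$ every vertex has degree at most $3$. This last fact I would read off from Lemma~\ref{lemma:dynkin}: since $\ugqui^{\alpha,+}$ is a disjoint union of Dynkin graphs and every Dynkin graph of type $A$, $D$, or $E$ has maximal degree $3$, we have $\deg_{\ugqui^{\alpha,+}}(v)\le 3$ for every $v\in\ugqui^{\alpha,+}_0$.

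First I would show that each tied vertex $t$ lies on at most three of the pieces $\Delta^{(i)}$. By properties (2)--(3) listed above, a tied vertex appearing in some $\Delta^{(i)}$ cannot have degree $2$ or $3$ there, so it is a leaf or an isolated vertex of $\Delta^{(i)}$ and meets at most one edge of $\Delta^{(i)}$; by property (4) two pieces never share an edge, so the (at most three) edges of $\ugqui^{\alpha,+}$ incident to $t$ are parcelled out among distinct pieces, whence $t$ lies on at most $\deg_{\ugqui^{\alpha,+}}(t)\le 3$ pieces. Next, property (2) guarantees that every piece contains at least one tied vertex, so reordering the sum of incidences gives
\[
\kappa_{\ugqui,\alpha}\le\sum_{i}\bigl|\{\text{tied vertices of }\Delta^{(i)}\}\bigr|=\sum_{t\ \mathrm{tied}}\bigl|\{i\mid t\in\Delta^{(i)}\}\bigr|\le 3\cdot\bigl|\{\text{tied vertices}\}\bigr|.
\]
Finally I would invoke Lemma~\ref{lemma:bounds}(iv), which bounds the number of tied vertices by $12\langle\alpha,\alpha\rangle_{\qui}^2$, to conclude $\kappa_{\ugqui,\alpha}\le 3\cdot 12\langle\alpha,\alpha\rangle_{\qui}^2=36\langle\alpha,\alpha\rangle_{\qui}^2$.

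The only delicate point, and the place where the constant $3$ (rather than $4$) is won, is the first step: I must exclude a spurious one-vertex piece $\{t\}$ that would overlap other pieces at $t$ and push $t$'s count up to $\deg_{\ugqui^{\alpha,+}}(t)+1$. This is controlled by the explicit cutting procedure described just before the lemma — cutting a Dynkin component only at its tied vertices of degree $2$ or $3$ makes $t$ appear in exactly $\deg_{\ugqui^{\alpha,+}}(t)$ pieces when that degree is positive, and in a single piece when $t$ is an isolated vertex of $\ugqui^{\alpha,+}$, so no such degenerate singleton arises. Everything else is routine bookkeeping, and the real content is the degree bound supplied by Lemma~\ref{lemma:dynkin}.
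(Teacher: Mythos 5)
Your proof is correct and follows essentially the same route as the paper: each piece $\Delta^{(i)}$ contains a tied vertex, each tied vertex lies on at most three pieces, and Lemma~\ref{lemma:bounds}(iv) bounds the number of tied vertices by $12\langle\alpha,\alpha\rangle_{\qui}^2$. The paper simply asserts the ``at most three'' step, while you justify it via the degree bound in Dynkin graphs and the structure of the cutting procedure, which is a welcome but inessential elaboration.
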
 

\begin{proof}  
Each $\Delta^{(i)}$ has a tied vertex, and one tied vertex belongs to at most three 
subgraphs $\Delta^{(i)}$. Thus $\kappa_{\ugqui,\alpha}$ is at most three times the number of tied vertices. Consequently by Lemma~\ref{lemma:bounds} (iv) we get 
$\kappa_{\ugqui,\alpha}\le 3\cdot 12 \cdot \langle\alpha,\alpha\rangle_{\qui}^2$. 
\end{proof}

\begin{lemma}\label{lemma:arithmetic} 
Take an $A_n$-type subgraph 
\[\Delta:\qquad 
\begin{tikzpicture}[scale=0.6]
\node at (0,0) {$v_1$}; \node at (3,0) {$v_2$} ; \node at (6,0) {$v_3$}; \node at (7.5,0) {$\cdots$}; \node at (9,0) {$v_{n-1}$}; \node at (12,0) {$v_n$} ; 
\draw (0.3,0)--(2.6,0);  \draw (3.3,0)--(5.6,0); \draw (9.6,0)--(11.6,0);
\end{tikzpicture} \qquad (n\ge 2)
\]
of $\ugqui^{\alpha,+}$, where $v_2,\dots,v_{n-1}$ have degree $2$ in $\ugqui$.   
\begin{itemize} 
\item[(i)] Then $\alpha(v_1),\dots,\alpha(v_n)$ is an arithmetic progression. 
\item[(ii)] If $v_n$ has degree $1$ in $\ugqui$, then $\alpha(v_{n-1})=2\alpha(v_n)$ 
(and hence $\alpha(v_1),\dots,\alpha(v_n)$ is a strictly decreasing arithmetic progression).
\end{itemize} 
\end{lemma}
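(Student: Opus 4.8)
The plan is to exploit the fact that, by hypothesis, every vertex $v_i$ of $\Delta$ lies in $\ugqui^{\alpha,+}_0$, so that the Cartan form satisfies $(\alpha,\varepsilon_{v_i})_{\ugqui}=0$ for each $i$. Writing this out via \eqref{eq:alphaepsilon}, for any vertex $v$ the quantity $\sum_{a\colon\source a=v}\alpha(\target a)+\sum_{a\colon\target a=v}\alpha(\source a)$ is simply the sum of the $\alpha$-values over the other endpoints of the edges of $\ugqui$ incident to $v$, counted with edge multiplicity (the orientation is irrelevant, since each incident edge contributes the $\alpha$-value of its other endpoint to exactly one of the two sums). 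Thus $(\alpha,\varepsilon_v)_{\ugqui}=0$ reads $2\alpha(v)=\sum_{w}m_{vw}\alpha(w)$, where $m_{vw}$ is the number of edges of $\ugqui$ joining $v$ and $w$.

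First I would treat the interior vertices. Fix $i$ with $2\le i\le n-1$. The subgraph $\Delta$ already supplies the two distinct edges $v_{i-1}v_i$ and $v_iv_{i+1}$, so $v_i$ has degree at least $2$ in $\ugqui$; since by hypothesis $\deg_{\ugqui}(v_i)=2$, these are the only edges at $v_i$, and each is a single edge (an $A_n$ graph has no multiple edges). Hence $(\alpha,\varepsilon_{v_i})_{\ugqui}=0$ collapses to the three-term relation
\[2\alpha(v_i)=\alpha(v_{i-1})+\alpha(v_{i+1}),\qquad 2\le i\le n-1,\]
equivalently $\alpha(v_{i+1})-\alpha(v_i)=\alpha(v_i)-\alpha(v_{i-1})$. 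This says precisely that the consecutive differences of $\alpha(v_1),\dots,\alpha(v_n)$ are all equal, i.e.\ the sequence is an arithmetic progression, which is (i). (For $n=2$ there are no interior vertices and the statement is trivial.)

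For (ii), suppose $v_n$ has degree $1$ in $\ugqui$. Then its unique incident edge is the single edge $v_{n-1}v_n$ of $\Delta$, so $(\alpha,\varepsilon_{v_n})_{\ugqui}=0$ becomes $2\alpha(v_n)=\alpha(v_{n-1})$, which is the asserted equality. Combined with (i), the common difference of the progression equals $\alpha(v_n)-\alpha(v_{n-1})=-\alpha(v_n)$, which is strictly negative because $\alpha$ is sincere, so $\alpha(v_n)>0$; hence the progression is strictly decreasing.

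The only point that genuinely needs care is the passage from the Cartan-form equation to a clean recurrence: one must check that the degree hypothesis in $\ugqui$, and not merely in $\Delta$, forces each interior $v_i$ to have no neighbours outside $\{v_{i-1},v_{i+1}\}$ and no multiple edges, and likewise that the single neighbour of $v_n$ is $v_{n-1}$. Once these local edge configurations are pinned down, both parts follow immediately from $(\alpha,\varepsilon_v)_{\ugqui}=0$, with no further computation required.
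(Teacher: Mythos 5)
Your proof is correct and follows essentially the same route as the paper: the paper packages the observation that all edges of $\ugqui$ at an interior (or degree-one end) vertex lie in $\Delta$ as the "free vertex" condition of Lemma~\ref{lemma:free vertex}, and then reads off $2\alpha(v_i)=\alpha(v_{i-1})+\alpha(v_{i+1})$ and $2\alpha(v_n)=\alpha(v_{n-1})$ exactly as you do. The only difference is presentational — you unwind \eqref{eq:alphaepsilon} directly instead of citing the auxiliary lemma — so there is nothing to add.
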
 

\begin{proof} 
For $i\in \{2,3,\dots,n-1\}$,  all edges of $\ugqui$ adjacent to $v_i$ belong to $\Delta$, therefore 
$i$ is a free vertex in $\ugqui^{\alpha,+}$. Thus by Lemma~\ref{lemma:free vertex} 
\[0=(\alpha,\varepsilon_{v_i})_{\ugqui}=(\alpha\vert_{\Delta},\varepsilon_{v_i})_{\Delta}
=2\alpha(v_i)-\alpha_{v_{i-1}}-\alpha_{v_{i+1}},\]  
and (i) follows. Moreover, if $v_n$ has degree $1$ in $\ugqui$, then again by Lemma~\ref{lemma:free vertex} 
\[0=(\alpha,\varepsilon_{v_n})_{\ugqui}=(\alpha\vert_{\Delta},\varepsilon_{v_n})_{\Delta}
=2\alpha(v_n)-\alpha_{v_{n-1}},\]  
showing (ii). 
\end{proof} 

For a subset $S$ of $\ugqui^{\alpha,+}_0$ set 
\[\mu_{\ugqui,\alpha,S}:=\max\{\alpha(v)\mid v\in S\mbox{ is tied}\}.\] 
For $S=\ugqui^{\alpha,+}_0$ we set 
\[\mu_{\ugqui,\alpha}:=\mu_{\ugqui,\alpha,\ugqui^{\alpha,+}_0}.\] 

\begin{lemma}\label{lemma:maxdim} 
For any $\Delta\in \{\Delta^{(1)},\dots,\Delta^{(\kappa_{\ugqui,\alpha})}\}$ 
(where the graphs $\Delta^{(i)}$ are as in Lemma~\ref{lemma:number of Gamma^i}),  denoting by $\Delta_0$ its vertex set, we have the inequality
\[\max\{\alpha(v)\mid v\in \Delta_0\}\le 
\begin{cases} 
\mu_{\ugqui,\alpha,\Delta_0} &\text{ if }\Delta\cong A_n \ (n\ge 1)\\
2\mu_{\ugqui,\alpha,\Delta_0}-1 &\text{ if } \Delta\cong D_n \ (n\ge 4) \\
3\mu_{\ugqui,\alpha,\Delta_0}
&\text{ if }\Delta\cong E_6,\ E_7 \text{ or }E_8.\end{cases}\]
In particular,  
for any $w\in \ugqui^{\alpha,+}_0$ we have $\alpha(w)\le 3\mu_{\ugqui,\alpha}$. 
\end{lemma}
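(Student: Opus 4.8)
The plan is to reduce the statement to a single bound on the value of $\alpha$ at the trivalent vertex of $\Delta$, exploiting that along any chain of free vertices the values of $\alpha$ form an arithmetic progression. First I would record the local facts that make the balance equations usable. If $v\in\ugqui^{\alpha,+}_0$ is free and has degree $d$ inside the piece $\Delta$ containing it, then every edge of $\ugqui$ at $v$ already lies in $\ugqui^{\alpha,+}$, and since free vertices are never the shared vertices of two pieces, all these edges lie in $\Delta$; hence $\deg_{\ugqui}(v)=\deg_{\Delta}(v)$ and, by Lemma~\ref{lemma:free vertex}, $(\alpha\vert_{\Delta},\varepsilon_v)_{\Delta}=(\alpha,\varepsilon_v)_{\ugqui}=0$. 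In particular a free leaf $v$ with neighbour $w$ satisfies $\alpha(w)=2\alpha(v)$, so $\alpha(v)<\alpha(w)$; and along any maximal chain of degree-two free vertices Lemma~\ref{lemma:arithmetic}(i) applies, so $\alpha$ is arithmetic there.

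For type $A_n$ there is no branch vertex: $\Delta$ is a single chain with its two leaves as ends, and since $\alpha$ is arithmetic its maximum is attained at a leaf. If that leaf were free, its neighbour would carry a strictly larger value (by $\alpha(w)=2\alpha(v)$, equivalently Lemma~\ref{lemma:arithmetic}(ii)), contradicting maximality; hence the maximum sits at a tied leaf, giving $\max_{v\in\Delta_0}\alpha(v)\le\mu_{\ugqui,\alpha,\Delta_0}$. For $D_n$ and $E_n$ I would isolate the unique trivalent vertex $b$ (free by property (3) of the decomposition), write $B:=\alpha(b)$, and describe each of its three legs as an arithmetic progression of common difference $d_i$ and length $\ell_i$ ending in a leaf $z_i$. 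As the values along each leg are arithmetic, every value of $\alpha$ on $\Delta$ is at most $\max(B,\max_i\alpha(z_i))$; free leaves contribute $\alpha(z_i)=B/(\ell_i+1)<B$ and tied leaves at most $\mu_{\ugqui,\alpha,\Delta_0}$, so it suffices to bound $B$.

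The core computation is the balance at $b$, namely $2B=\sum_{i=1}^3(B+d_i)$, which yields the clean identity $d_1+d_2+d_3=-B$. Substituting $d_i=-B/(\ell_i+1)$ for a free leg and $d_i=(\tau_i-B)/\ell_i$ for a tied leg with leaf value $\tau_i\le\mu_{\ugqui,\alpha,\Delta_0}$ gives a single linear equation $B(\Sigma-1)=\sum_{\text{tied }i}\tau_i/\ell_i$, where $\Sigma=\sum_{\text{free }i}\frac{1}{\ell_i+1}+\sum_{\text{tied }i}\frac{1}{\ell_i}$; property (2) guarantees at least one tied leg, and one checks $\Sigma>1$ in every case, so $B=\bigl(\sum_{\text{tied}}\tau_i/\ell_i\bigr)/(\Sigma-1)\le\mu_{\ugqui,\alpha,\Delta_0}\,S_t/(\Sigma-1)$ with $S_t=\sum_{\text{tied}}1/\ell_i$. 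For $D_n$ the legs have lengths $(1,1,n-3)$, and a short case check over which leaves are tied shows $B/\mu_{\ugqui,\alpha,\Delta_0}\le(2n-5)/(n-2)<2$, whence $B\le2\mu_{\ugqui,\alpha,\Delta_0}-1$ by integrality. For $E_6,E_7,E_8$ the legs have lengths $(1,2,2),(1,2,3),(1,2,4)$; the target $B\le3\mu_{\ugqui,\alpha,\Delta_0}$ is equivalent to $3S_f+2S_t\ge3$ (with $S_f=\sum_{\text{free}}\frac{1}{\ell_i+1}$), and minimizing the left side leg-by-leg (a length-one leg prefers to be free, a leg of length $\ge2$ prefers to be tied) yields minima $7/2,19/6,3$, all $\ge3$. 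The final assertion then follows since $\mu_{\ugqui,\alpha,\Delta_0}\le\mu_{\ugqui,\alpha}$ and $2\mu-1,3\mu\le3\mu$.

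I expect the main obstacle to be bookkeeping rather than depth. The equality $\deg_{\ugqui}=\deg_{\Delta}$ at free vertices (so that the balance equation holds in $\ugqui$, not merely in $\Delta$) must be argued carefully from properties (3)–(4), and one must verify $\Sigma>1$ and hence the correct sign before dividing, which is exactly where property (2) enters. A genuine point to flag is that the $E$-type estimate is tight: the $E_8$ profile $(1,2,4)$ realizes $3S_f+2S_t=3$ and thus $B=3\mu_{\ugqui,\alpha,\Delta_0}$, so no strict improvement to $3\mu-1$ is possible and the integrality sharpening that produces $2\mu-1$ for $D_n$ cannot be carried over to $E$.
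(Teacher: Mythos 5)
Your proof is correct, and it leans on the same two pillars as the paper's argument (Lemma~\ref{lemma:free vertex} to get the exact balance $(\alpha,\varepsilon_v)_{\ugqui}=0$ at free vertices, and Lemma~\ref{lemma:arithmetic} to make $\alpha$ arithmetic along chains of degree-two free vertices), but the way you finish the $D$ and $E$ cases is genuinely different in organization. The paper fixes the trivalent vertex $v$, derives $\alpha(v)/2\le\alpha(u_i)\le\alpha(v)$ for its neighbours, disposes of the case where a length-one branch ends in a tied leaf via the strict inequality $\alpha(v)<2\alpha(u_i)$, and then in the remaining cases pins down $\alpha\vert_\Delta$ explicitly as a multiple of the familiar $D_n$/$E_6$/$E_7$/$E_8$ profiles (e.g.\ $2,4,6,5,4,3,2$ with $3$ on the short arm for $E_8$) before reading off the ratio $\alpha(v)/\alpha(w_3)$. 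You instead encode everything in the single identity $d_1+d_2+d_3=-B$ at the branch vertex, substitute $d_i=-B/(\ell_i+1)$ or $d_i=(\tau_i-B)/\ell_i$ according to whether the leaf is free or tied, and reduce the lemma to the elementary inequalities $2S_f+S_t>2$ (type $D$) and $3S_f+2S_t\ge3$ (type $E$), minimized leg by leg over the leg-length multisets $(1,1,n-3)$, $(1,2,2)$, $(1,2,3)$, $(1,2,4)$. What your route buys is uniformity — one formula covers all tied/free configurations, you never need to write down the Dynkin null-root profiles, and the integrality sharpening to $2\mu_{\ugqui,\alpha,\Delta_0}-1$ in type $D$ and the tightness of $3\mu_{\ugqui,\alpha,\Delta_0}$ for $E_8$ fall out transparently; what the paper's route buys is that the extremal dimension vectors are exhibited explicitly. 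The points you flag as delicate are exactly the right ones: the identification $\deg_{\ugqui}(v)=\deg_{\Delta}(v)$ at free vertices does need properties (3)--(4) of the decomposition (a free vertex cannot be shared between two pieces and all its edges lie in $\ugqui^{\alpha,+}$), and property (2) is what guarantees at least one tied leg so that the division by $\Sigma-1>0$ produces a nontrivial bound.
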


\begin{proof} 
Take $\Delta\in  \{\Delta^{(1)},\dots,\Delta^{(\kappa_{\ugqui,\alpha})}\}$ containing the vertex $w$. 
Consider first the case when $\Delta\cong A_n$: 
\[
\begin{tikzpicture}[scale=0.6]
\node at (-2,0) {$\Delta$:};
\node at (0,0) {$v_1$}; \node at (3,0) {$v_2$} ; \node at (6,0) {$v_3$}; \node at (7.5,0) {$\cdots$}; \node at (9,0) {$v_{n-1}$}; \node at (12,0) {$v_n$} ; 
\draw (0.3,0)--(2.6,0);  \draw (3.3,0)--(5.6,0); \draw (9.6,0)--(11.6,0); 
\node at (15,0) {$(n\ge 3)$};
\end{tikzpicture} 
\]
If both $v_1$ and $v_n$ are tied vertices, then 
\[\max\{\alpha(v_i)\mid i=1,\dots,n\}=\max\{\alpha(v_1),\alpha(v_n)\}=
\mu_{\ugqui,\alpha,\Delta_0}\] 
by Lemma~\ref{lemma:arithmetic} (i). 
If only one of $v_1$ and $v_n$ is tied, say only $v_1$ is a tied vertex, 
then 
\[\max\{\alpha(v_i)\mid i=1,\dots,n\}=
\alpha(v_1)=\mu_{\ugqui,\alpha,\Delta_0}\] 
by Lemma~\ref{lemma:arithmetic} (ii).  

Suppose next that $\Delta$ is isomorphic to $D_n$ ($n\ge 4$), $E_6$, $E_7$, or $E_8$. 
Denote by $v$ the only degree $3$ vertex of $\Delta$ 
(so $v$ is a free vertex), and denote by $w_1$, $w_2$, $w_3$ the end vertices of the three branches starting at $v$. 
Denote by $u_i$ the vertex connected by an edge to $v$ along the branch between $v$ and $w_i$, $i=1,2,3$. Without loss of generality we may assume that $u_1=w_1$ (i.e. the branch connecting $v$ and $w_1$ consists of a single edge), and 
the branch between $v$ and $w_3$ is longest among the three branches. 
By Lemma~\ref{lemma:free vertex} we have 
\begin{equation}\label{eq:degree 3}
0=(\alpha,\varepsilon_v)_{\ugqui}=(\alpha\vert_{\Delta},\varepsilon_v)_{\Delta}=
2\alpha(v)-\alpha(u_1)-\alpha(u_2)-\alpha(u_3)
\end{equation} 
and 
\begin{equation}\label{eq:2alpha(u_i)}
0=(\alpha,\varepsilon_{u_i})_{\ugqui}\le (\alpha\vert_{\Delta},\varepsilon_{u_i})_{\Delta}
\le 2\alpha(u_i)-\alpha(v).\end{equation} 
It follows from \eqref{eq:degree 3} and \eqref{eq:2alpha(u_i)} 
that $\alpha(v)/2\le \alpha(u_i)\le \alpha(v)$ for $i=1,2,3$, 
implying by Lemma~\ref{lemma:arithmetic} (i) that 
\[\max\{\alpha(z)\mid z\in \Delta_0\}=\alpha(v).\] 
So we need to give an upper bound for $\alpha(v)$ in terms of 
$\mu_{\ugqui,\alpha,\Delta_0}$. 

If $u_i=w_i$ is a tied vertex in $\ugqui^{\alpha,+}$ 
(i.e. the branch of $\Delta$ connecting $v$ and $w_i$ consists of a single edge) then the first inequality in \eqref{eq:2alpha(u_i)} is strict, thus 
\[\max\{\alpha(z)\mid z\in \Delta_0\}=\alpha(v)<2\alpha(u_i)\le 2\mu_{\ugqui,\alpha,\Delta_0}.\]  
It remains to deal with the cases when 
$u_i=w_i$ implies that $w_i$ is a free vertex in $\ugqui^{\alpha,+}$, so in the rest of the proof we we shall assume that this is the case. 

Assume that $\Delta\cong D_n$ for some $n\ge 4$: 
\[
\begin{tikzpicture}[scale=0.6]
\node at (-2,0) {$\Delta\cong D_n$:}; 
\node at (0,0) {$w_3$}; \node at (3,0) {$\bullet$} ; \node at (6,0) {$\bullet$}; \node at (7.5,0) {$\cdots$}; \node at (9,0) {$u_3$}; \node at (12,0) {$v$} ; 
\draw (12.3,0.3)--(14.3,2.1); \draw (12.3,-0.3)--(14.6,-1.9); 
\node at (15,2) {$w_1=u_1$} ; 
\node at (15,-2) {$w_2=u_2$} ; 
\draw (0.3,0)--(2.6,0);  \draw (3.3,0)--(5.6,0); \draw (9.7,0)--(11.3,0);
\end{tikzpicture} 
\] 
Then $u_1=w_1$ and $u_2=w_2$ are free vertices in $\ugqui^{\alpha,+}$,  
$\alpha(u_1)=\alpha(u_2)=\alpha(v)/2$, and hence $\alpha(u_3)=\alpha(v)$.  
Then by Lemma~\ref{lemma:arithmetic} (i)  the value of $\alpha$ along the branch  connecting $v$ and $w_3$ must be constantly 
$\alpha(v)$, and $w_3$ has to be a tied vertex, implying the inequality 
\[\max\{\alpha(z)\mid z\in \Delta_0\}=\alpha(v)=\alpha(w_3)\le \mu_{\ugqui,\alpha,\Delta_0}.\] 

Assume finally that $\Delta$ is $E_6$, $E_7$, or $E_8$. 
As we pointed out before, we may assume that  $w_1$ (the end vertex of the length one branch of $\Delta$) is a free vertex in $\ugqui^{\alpha,+}$.  
Setting $d:=\alpha(w_1)$ we have  $\alpha(v)=2d$, and then 
$\alpha(u_2)=\frac 32d-e$ and $\alpha(u_3)=\frac 32d+e$ for some $e$. This determines the value of $\alpha$ on the remaining vertices of $\Delta$ as it is indicated in the picture below 
for the case $\Delta\cong E_8$. 

\[
\begin{tikzpicture}[scale=0.6]
\node at (0,3) {$\Delta\cong E_8$:}; 
\node at (0,0) {$\bullet$}; \node at (3,0) {$\bullet$} ; \node at (6,0) {$\bullet$}; 
\node[below] at (0,0) {$d-2e$};  \node[below] at (3,0) {$\frac 32d-e$}; 
\node[below] at (6,0) {$2d$}; \node[above] at (6,3) {$d$}; 
\node[below] at (9,0) {$\frac 32d+e$}; \node[below] at (12,0) {$d+2e$}; 
\node[below] at (15,0) {$\frac d2+3e$}; \node[below] at (18,0) {$4e$}; 
 \node at (6,3) {$\bullet$}; \node at (9,0) {$\bullet$}; \node at (12,0) {$\bullet$}; \node at (15,0) {$\bullet$} ; \node at (18,0) {$\bullet$}; 
\draw (0.3,0)--(2.7,0);  \draw (3.3,0)--(5.7,0); \draw (6.3,0)--(8.7,0); \draw (9.3,0)--(11.7,0);
\draw (12.3,0)--(14.7,0); \draw (15.3,0)--(17.7,0);
\draw (6,0.3)--(6,2.7); 
\end{tikzpicture} 
\] 

Suppose first that $w_2$ (the leftmost vertex on the picture above) is a tied vertex in $\ugqui^{\alpha,+}$. 
Then $2(d-2e)>\frac 32 d-e$, hence $d>6e$. It follows that 
\[\max\{\alpha(z)\mid z\in \Delta_0\}=\alpha(v)=2d<3(d-2e)=3\alpha(w_2)\le 
3\mu_{\ugqui,\alpha,\Delta_0}.\] 
It remains to deal with the case when 
$w_2$ is a free vertex, 
hence $2(d-2e)=\frac 32d-e$, and $w_3$ (the rightmost vertex on the picture above) must be tied. 
Then $d=6e$, and when $\Delta\cong E_8$, the dimension vector $\alpha\vert_{\Delta}$ is a positive integer multiple of 
$\left(\begin{array}{ccccccc} &  & 3 &  &  &  &  \\2 & 4 & 6 & 5 & 4 & 3 & 2\end{array}\right)$.  
So $\alpha(v)/\alpha(w_3)= 6/2=3$, implying 
\[\max\{\alpha(z)\mid z\in \Delta_0\}=\alpha(v)=3\alpha(w_3)\le3\mu_{\ugqui,\alpha,\Delta_0}.\] 
When $\Delta\cong E_7$, the dimension vector $\alpha\vert_{\Delta}$ is a positive integer multiple of 
$\left(\begin{array}{cccccc} &  & 3 &  &  &   \\2 & 4 & 6 & 5 & 4 & 3 \end{array}\right)$, and 
so $\alpha(v)=2\alpha(w_3)$, whereas 
when $\Delta\cong E_6$, the dimension vector $\alpha\vert_{\Delta}$ is a positive integer multiple of 
$\left(\begin{array}{ccccc} &  & 3 &  &    \\2 & 4 & 6 & 5 & 4 \end{array}\right)$, 
whence $\alpha(v)= \frac 32 \alpha(w_3)$. 
So the desired inequality $\alpha(v)\le 3\mu_{\ugqui,\alpha}$ holds in these cases as well.  
\end{proof}  

\begin{lemma}\label{lemma:A_4 subgraph} 
If $|\ugqui^{\alpha,+}_0|>\kappa_{\ugqui,\alpha}\cdot \max\{8,2\mu_{\ugqui,\alpha}+1\}$, 
then $\ugqui^{\alpha,+}$ has a subgraph 
\[
\begin{tikzpicture}[scale=0.6]
\node at (-2,0) {$\Delta$:}; \node at (0,0) {$v_1$}; \node at (3,0) {$v_2$} ; \node at (6,0) {$v_3$};  \node at (9,0) {$v_4$}; 
\draw (0.3,0)--(2.6,0);  \draw (3.3,0)--(5.6,0); \draw (6.3,0)--(8.6,0);
\end{tikzpicture} 
\]
where $\deg_{\ugqui}(v_2)=\deg_{\ugqui}(v_3)=2$ and 
$\alpha(v_1)=\alpha(v_2)=\alpha(v_3)=\alpha(v_4)$. 
\end{lemma}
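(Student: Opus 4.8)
The plan is to locate, by pigeonhole, a single large Dynkin piece among the $\Delta^{(i)}$ and then to force the dimension vector to be constant along a long path inside it, so that a constant $A_4$-stretch can be read off. Since the subgraphs $\Delta^{(1)},\dots,\Delta^{(\kappa_{\ugqui,\alpha})}$ cover all vertices of $\ugqui^{\alpha,+}$, we have $\sum_i|\Delta^{(i)}_0|\ge|\ugqui^{\alpha,+}_0|>\kappa_{\ugqui,\alpha}\cdot\max\{8,2\mu_{\ugqui,\alpha}+1\}$, so some piece $\Delta:=\Delta^{(i)}$ satisfies $|\Delta_0|>\max\{8,2\mu_{\ugqui,\alpha}+1\}$; in particular $|\Delta_0|>8$ and $|\Delta_0|>2\mu_{\ugqui,\alpha}+1$. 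As $\Delta$ is Dynkin (property (1)) and $E_6,E_7,E_8$ each have at most $8$ vertices, $\Delta$ must be of type $A_n$ or $D_n$ with $n=|\Delta_0|>8$.

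Before the two cases, I would record the degree bookkeeping used throughout. If $v$ is a vertex of $\Delta$ with $\deg_\Delta(v)=2$, then $v$ is free in $\ugqui^{\alpha,+}$ by property (3); since the pieces are obtained by cutting the (tree) Dynkin components of $\ugqui^{\alpha,+}$ only at tied vertices, a free vertex lies in a single piece together with all of its edges, and being free it has no edge to $\ugqui^{\alpha,-}_0$. Hence $\deg_{\ugqui}(v)=\deg_{\ugqui^{\alpha,+}}(v)=\deg_\Delta(v)=2$. This is exactly the hypothesis required to invoke Lemma~\ref{lemma:arithmetic}.

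The core is a counting step forcing an arithmetic progression of dimensions to be constant. If $\Delta\cong A_n$, write the path as $v_1-v_2-\cdots-v_n$; by the previous paragraph the interior vertices have degree $2$ in $\ugqui$, so $\alpha(v_1),\dots,\alpha(v_n)$ is an arithmetic progression by Lemma~\ref{lemma:arithmetic}(i), and all terms lie in $\{1,\dots,\mu_{\ugqui,\alpha}\}$ by Lemma~\ref{lemma:maxdim}. Were the common difference nonzero, the $n$ terms would be distinct positive integers of size at most $\mu_{\ugqui,\alpha}$, forcing $n\le\mu_{\ugqui,\alpha}$ and contradicting $n>2\mu_{\ugqui,\alpha}+1$. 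Thus $\alpha$ is constant on the path, and (since $n>8$) four consecutive vertices whose middle two are interior yield the desired subgraph. If $\Delta\cong D_n$, let $v$ be the degree-$3$ vertex; by the analysis in the proof of Lemma~\ref{lemma:maxdim} the maximum of $\alpha$ on $\Delta$ equals $\alpha(v)\le 2\mu_{\ugqui,\alpha}-1$, and along the long tail $v=p_1-p_2-\cdots-p_m$ with $m=n-2$ the values again form an arithmetic progression (the interior tail vertices having degree $2$ in $\ugqui$), all of size at most $2\mu_{\ugqui,\alpha}-1$. Since $m=n-2>2\mu_{\ugqui,\alpha}-1$, the identical counting forces this progression to be constant, and four consecutive tail vertices with interior middle two give the required $A_4$.

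The step I expect to be the main obstacle is the degree bookkeeping: establishing rigorously that the interior vertices of the relevant path have degree exactly $2$ in the full quiver $\ugqui$, and not merely in $\Delta$ or in $\ugqui^{\alpha,+}$, since this is what both Lemma~\ref{lemma:arithmetic} and the final conclusion demand. This rests on the precise manner in which the pieces $\Delta^{(i)}$ were cut out (only at tied vertices) together with freeness. I would also double-check the edge cases, namely that $n>8$ leaves enough room—at least four consecutive vertices with interior middle two—in both the $A_n$ path and the $D_n$ tail $m=n-2$, and that $\mu_{\ugqui,\alpha}\ge 1$ (which holds since every piece contains a tied vertex by property (2) and $\alpha$ is sincere).
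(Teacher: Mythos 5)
Your proposal is correct and follows essentially the same route as the paper: pigeonhole over the pieces $\Delta^{(i)}$ to find one with more than $\max\{8,2\mu_{\ugqui,\alpha}+1\}$ vertices, rule out types $E_6,E_7,E_8$ by size, and then combine Lemma~\ref{lemma:arithmetic} with the bound from Lemma~\ref{lemma:maxdim} to force the arithmetic progression along the long $A$-path (or the long tail of $D_n$) to be constant. Your extra care with the degree bookkeeping (free degree-$2$ vertices of a piece have degree $2$ in $\ugqui$ itself) is exactly the point the paper uses implicitly, so there is nothing to fix.
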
 

\begin{proof} 
Recall that $\ugqui^{\alpha,+}$ is the union of its subgraphs $\Delta^{(1)}$,$\dots$,
$\Delta^{(\kappa_{\ugqui,\alpha})}$ introduced before Lemma~\ref{lemma:number of Gamma^i}. 

\emph{Case 1.: Some $\Delta^{(i)}$ is isomorphic to $A_n$, where 
$n >\max\{3,\mu_{\ugqui,\alpha}\}$.} The values of $\alpha$ along the vertices of 
$\Delta^{(i)}$ are bounded by $\mu_{\ugqui,\alpha}$ by Lemma~\ref{lemma:maxdim}, therefore they can not form a non-constant arithmetic progression of length $n>\mu_{\ugqui,\alpha}$. It follows by  Lemma~\ref{lemma:arithmetic} that $\alpha$ is constant along 
the vertices of $\Delta^{(i)}$, and since $n\ge 4$, the desired statement holds.    

\emph{Case 2.: Some $\Delta^{(i)}$ is isomorphic to $D_n$, where 
$n>\max\{5,2\mu_{\ugqui,\alpha}+1\}$.}  
Then $\Delta^{(i)}$ contains a subgraph isomorphic to $A_{n-2}$ whose  degree $2$ vertices are free, hence have degree $2$ in $\ugqui$. The dimension vector $\alpha$ is smaller than $n-2$ by Lemma~\ref{lemma:maxdim} on the vertices 
of this $A_{n-2}$ subgraph. Thus the values of $\alpha$ must be constant along the 
vertices of this $A_{n-2}$ subgraph by Lemma~\ref{lemma:arithmetic}. 
As $n-2\ge 4$ as well, we are done again. 

Note that $|\ugqui^{\alpha,+}_0|\le \sum_{i=1}^{\kappa_{\ugqui,\alpha}}|\Delta^{(i)}_0|$. 
If $\Delta^{(i)}$ is isomorphic to $E_6$, $E_7$, or $E_8$, then it has at most $8$ vertices. 
Therefore the assumption  
$|\ugqui^{\alpha,+}_0|>\kappa_{\ugqui,\alpha}\cdot  \max\{8,2\mu_{\ugqui,\alpha}+1\}$  
guarantees that necessarily we are in Case 1 or Case 2, 
so the desired statement holds. 
\end{proof} 

\begin{lemma}\label{lemma:shrink in F_Q} 
A $\tau\sigma$-minimal element in the set 
\[\mathcal{C}:=\{(\qui',\alpha') \mid \qui'\mbox{ is  a wild connected quiver, }\alpha'\in F_{\qui'},\ 
\mathrm{supp}(\alpha')=\qui'
\}\] 
does not contain a full subgraph 
\[
\begin{tikzpicture}[scale=0.6]
\node at (0,0) {$v_1$}; \node at (3,0) {$v_2$} ; \node at (6,0) {$v_3$};  \node at (9,0) {$v_4$}; 
\draw (0.3,0)--(2.6,0);  \draw (3.3,0)--(5.6,0); \draw (6.3,0)--(8.6,0);
\end{tikzpicture} 
\]
isomorphic to $A_4$ with vertices $v_1$, $v_2$, $v_3$, $v_4$ such that 
$\deg_{\ugqui}(v_2)=\deg_{\ugqui}(v_3)=2$ and 
$\alpha(v_1)=\alpha(v_2)=\alpha(v_3)=\alpha(v_4)$. 
\end{lemma}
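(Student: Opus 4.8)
The plan is to argue by contradiction: assuming such a full $A_4$-subgraph $v_1-v_2-v_3-v_4$ with $\deg_{\ugqui}(v_2)=\deg_{\ugqui}(v_3)=2$ and common value $m:=\alpha(v_1)=\alpha(v_2)=\alpha(v_3)=\alpha(v_4)$ exists, I would produce a short sequence of $\tau$- and $\sigma$-operations landing back in $\mathcal{C}$ with strictly fewer vertices, contradicting $\tau\sigma$-minimality. The starting observation is that the two middle vertices are \emph{balanced}: since $v_2$ has degree $2$ with neighbours $v_1,v_3$ of dimension $m$, formula \eqref{eq:alphaepsilon} gives $(\alpha,\varepsilon_{v_2})_{\ugqui}=2m-(m+m)=0$, and likewise $(\alpha,\varepsilon_{v_3})_{\ugqui}=0$. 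Consequently, whenever $v_2$ (resp.\ $v_3$) happens to be a source or a sink it is automatically a small one (as $m+m>m$), and the reflection $\sigma_{v_2}$ (resp.\ $\sigma_{v_3}$) fixes $\alpha$, since the new value it assigns at the vertex is $-m+(m+m)=m$; such a reflection only reverses the orientations of the two edges at the vertex and changes neither the underlying graph nor the dimension vector.

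Next I would use these harmless reflections to expose a \emph{through} vertex (one arrow in, one out). Orienting the three edges of the path, $v_2$ is a through vertex precisely when the edges $v_1-v_2$ and $v_2-v_3$ form a directed path through $v_2$, and similarly for $v_3$; otherwise the vertex is a source or a sink. If either $v_2$ or $v_3$ is already a through vertex I proceed directly to the reduction step below. Otherwise both middle vertices are sources or sinks, which forces the middle edge $v_2-v_3$ to be oriented oppositely to both outer edges; applying the single reflection $\sigma_{v_2}$ then reverses the two edges at $v_2$ and turns $v_3$ into a through vertex, without disturbing the degrees, the dimension vector, or membership in $\mathcal{C}$.

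The reduction step is to apply $\tau_u$ at the through vertex $u\in\{v_2,v_3\}$. Since $u$ has degree $2$, with a single in-arrow and a single out-arrow whose endpoints have dimension $m=\alpha(u)$, the vertex $u$ is large, so $\tau_u$ is defined; it deletes $u$ and replaces the length-two path through $u$ by one new arrow between its two neighbours (which are distinct and, as the subgraph is a full $A_4$, were previously unjoined). I would then check the three defining conditions of $\mathcal{C}$ for $\tau_u(\qui,\alpha)$. Connectedness is clear, since $u$ was a degree-$2$ vertex and its deletion is compensated by the new arrow. For the fundamental-set condition, note that the only vertices whose neighbourhoods change are the two neighbours of $u$, and each merely trades a neighbour of dimension $m$ for another neighbour of dimension $m$; hence by \eqref{eq:alphaepsilon} the quantity $(\alpha,\varepsilon_v)_{\ugqui}$ is unchanged at every vertex $v$, so $\tau_u\alpha\in F_{\tau_u\ugqui}$. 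Finally a direct count gives $\langle\tau_u\alpha,\tau_u\alpha\rangle_{\tau_u\qui}=\langle\alpha,\alpha\rangle_{\qui}$: removing $u$ drops $\sum_v\alpha(v)^2$ by $m^2$, while deleting the two arrows at $u$ and adding the single new arrow changes $\sum_a\alpha(\source a)\alpha(\target a)$ by $-m^2-m^2+m^2=-m^2$, and the two effects cancel. Since the original quiver is wild we have $\langle\alpha,\alpha\rangle_{\qui}<0$ by Remark~\ref{remark:d>1}(i), so $\tau_u\qui$ is a connected quiver with $\langle\tau_u\alpha,\tau_u\alpha\rangle_{\tau_u\qui}<0$; as the Tits form of a Dynkin (resp.\ extended Dynkin) graph is positive definite (resp.\ positive semidefinite), this forces $\tau_u\qui$ to be wild.

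Putting this together, $\sigma_{v_2}$ (only when needed) followed by $\tau_u$ produces a pair in $\mathcal{C}$ with one fewer vertex, so $(\qui,\alpha)$ would not be $\tau\sigma$-minimal, a contradiction. The point requiring the most care is this last verification that the reduced pair remains in $\mathcal{C}$: it hinges entirely on the hypothesis that all four path-dimensions are equal, which is exactly what keeps both the Cartan pairings $(\alpha,\varepsilon_v)_{\ugqui}$ and the Tits form invariant under the deletion, and it is also what makes the auxiliary reflections dimension-preserving.
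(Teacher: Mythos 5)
Your proposal is correct and follows essentially the same route as the paper: split into cases according to whether one of the middle vertices is a ``through'' vertex (one arrow in, one out), apply $\tau$ there directly, and in the remaining source/sink configuration first apply one dimension-preserving reflection $\sigma$ to create a through vertex; then verify membership in $\mathcal{C}$ by checking that all Cartan pairings $(\alpha,\varepsilon_v)_{\ugqui}$ are unchanged. The only cosmetic difference is that you certify wildness of the reduced quiver via the preserved (negative) value of the Tits form, while the paper instead tracks a surviving vertex $w$ with $(\varepsilon_w,\alpha)_{\qui}<0$; both are valid.
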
 
\begin{proof} 
Let $(\qui,\alpha)$ be a $\tau\sigma$-minimal element in $\mathcal{C}$, 
and suppose for contradiction that $\qui$ has a subgraph as in the statement.  
Denote by $a_i$ the arrow connecting $v_i$ and $v_{i+1}$ for $i=1,2,3$. 
By symmetry we may assume that $\source a_2=v_2$ and $\target a_2=v_3$.  
If $\target a_1=v_2$, then $v_2$ is a large vertex for $(\qui,\alpha)$, and we may apply 
$\tau_{v_2}$ to get the pair 
$\tau_{v_2}(\qui,\alpha)$, where $\tau_{v_2}\qui$ has one less vertices than $\qui$.   
We claim that $\tau_{v_2}(\qui,\alpha)\in \mathcal{C}$, contrary to the assumption that 
$(\qui,\alpha)$ is $\tau\sigma$-minimal in $\mathcal{C}$. 
Clearly $\tau_{v_2}\qui$ is connected and 
$\tau_{v_2}\alpha$ is a sincere dimension vector for $\tau_{v_2}\qui$. 
We show that 
$(\varepsilon_w,\alpha)_{\qui}=(\varepsilon_w,\tau_{v_2}\alpha)_{\tau_{v_2}\qui}$ for any vertex $w\in \tau_{v_2}\qui_1=\qui_1\setminus\{v_2\}$. 
Indeed, a vertex $w\in \tau_{v_2}\qui_1\setminus \{v_1,v_3\}$ is not connected to $v_2$ in $\qui$, hence by \eqref{eq:alphaepsilon} we obviously have $(\varepsilon_w,\tau_{v_2}\alpha)_{\tau_{v_2}\qui}=
(\varepsilon_w,\alpha)_{\qui}$. On the other hand, it is easy to infer from 
\eqref{eq:alphaepsilon} that 
$(\varepsilon_{v_1},\alpha)_{\qui}-(\varepsilon_{v_1},\tau_{v_2}\alpha)_{\tau_{v_2}\qui}=
-\alpha(v_2)+\alpha(v_3)=0$, and 
$(\varepsilon_{v_3},\alpha)_{\qui}-(\varepsilon_{v_3},\tau_{v_2}\alpha)_{\tau_{v_2}\qui}=
-\alpha(v_2)+\alpha(v_1)=0$. Consequently, $\tau_{v_2}\alpha$ belongs to the fundamental set $F_{\tau_{v_2}\qui}$. Since $\qui$ is a wild quiver, it has a vertex $w$ with 
$(\varepsilon_w,\alpha)_{\qui}<0=(\varepsilon_{v_2},\alpha)_{\qui}$,  
so $w\neq v_2$. Thus $w$ is a vertex of $\tau_{v_2}\qui$ with 
$(\varepsilon_w,\tau_{v_2}\alpha)_{\tau_{v_2}\qui}<0$, implying that 
$\tau_{v_2}$ is a wild quiver. 
Therefore $\tau_{v_2}\qui\in \mathcal{C}$, and this contradicts the assumption that  $(\qui,\alpha)$ is $\tau\sigma$-minimal 
in $\mathcal{C}$. 

If $\source a_3=v_3$, then we can apply  
$\tau_{v_3}$ to get a contradiction.  
It remains to deal with the case when $v_2$ is a source and $v_3$ is a sink. 
In this case $v_3$ is a small sink for $(\qui,\alpha)$. Then we may apply the operation $\sigma_{v_3}$. After that the operation $\tau_{v_2}$ can be applied, yielding a contradiction again. 
This finishes the proof. 
\end{proof} 

\bigskip
\begin{proofof}{Theorem~\ref{thm:bounding the quiver}}
Let $(\qui,\alpha)$ be a $\tau\sigma$-minimal element in the set $\mathcal{C}$ 
defined in the statement. 
It follows by Lemma~\ref{lemma:A_4 subgraph} and Lemma~\ref{lemma:shrink in F_Q} 
that 
$|\ugqui^{\alpha,+}_0|\le \kappa_{\ugqui,\alpha}\cdot \max\{8,2\mu_{\ugqui,\alpha}+1\}$. 
We have $\kappa_{\ugqui,\alpha}\le 36(d-1)^2$ by Lemma~\ref{lemma:number of Gamma^i}, 
and $\mu_{\ugqui,\alpha}\le 6(d-1)$ by  Lemma~\ref{lemma:bounds} (v). Moreover, as $\qui$ is wild and $\alpha$ is a sincere dimension vector, we have $\langle\alpha,\alpha\rangle_{\qui}<0$, implying that $d>1$, and hence $\max\{8,12(d-1)+1\}=12d-11$. 
Consequently, $|\ugqui^{\alpha,+}_0|\le  36(d-1)^2(12d-11)$. 
Taking into account Lemma~\ref{lemma:bounds} (i) and we conclude  
\[|\qui_0|=|\ugqui^{\alpha,-}_0|+|\ugqui^{\alpha,+}_0|
\le 2(d-1)+ 36(d-1)^2(12d-11).\] 
As the connected components of the graph $\ugqui^{\alpha,+}$ are trees, 
we have $|\ugqui^{\alpha,+}_1|\le |\ugqui^{\alpha,+}_0|$ 
(with equality when $\qui^{\alpha,+}$ is empty), hence by Lemma~\ref{lemma:bounds} (vii) we have 
\[|\qui_1|\le 2(d-1)+ 36(d-1)^2(12d-11)+12(d-1)^2.\] 
Combining Lemma~\ref{lemma:bounds} (ii), (v) 
we get $\mu_{\qui,\alpha}\le 6(d-1)$, and taking into account Lemma~\ref{lemma:maxdim},  
we conclude  
\[\max\{\alpha(v)\mid v\in \qui_0\}\le 18(d-1).\] 
\end{proofof} 


\section{The derivation of Theorem~\ref{thm:fundamental domain}} \label{sec:moduli}

\begin{proofof}{Corollary~\ref{cor:main}} 
We introduce a strict partial order on $\mathcal{C}$ as follows. For 
$(\qui,\alpha)$ and $(\qui',\alpha')$ in $\mathcal{C}$ we set 
$(\qui',\alpha')\prec (\qui,\alpha)$ if there exists a sequence 
$(\qui^{(i)},\alpha^{(i)})$ ($i=0,\dots,n$) 
of quiver-dimension vector pairs satisfying (1), (2), (3), (4) from 
Definition~\ref{def:tausigma-minimal} with $(\qui',\alpha')=(\qui^{(n)},\alpha^{(n)})$.  
Condition (4) ensures that $\prec$ is a strict partial order satisfying the descending chain condition, so any subset of $\mathcal{C}$ has a minimal element with respect to $\prec$. 

Now assume that the $d$-dimensional algebraic variety $X$ is isomorphic to 
$\moduli(\qui,\alpha,\theta)$, where $(\qui,\alpha)\in \mathcal{C}$ and 
$\alpha$ is $\theta$-stable. Then  there exists a $(\qui',\alpha')$ in $\mathcal{C}$ which is 
\begin{itemize} 
\item minimal with respect to $\prec$; 
\item $(\qui',\alpha')\preceq (\qui,\alpha)$. 
\end{itemize} 
In particular, $(\qui',\alpha')$ is $\tau\sigma$-minimal in $\mathcal{C}$. 
Moreover, a repeated application of Theorem~\ref{thm:main} implies that there exists a weight $\theta'$ for $\qui'$ such that $\alpha'$ is $\theta'$-stable, and 
$\moduli(\qui',\alpha',\theta')\cong \moduli(\qui,\alpha,\theta)$. 
\end{proofof} 

We shall use the following known fact: 

\begin{lemma}\label{lemma:fixed dim, varying weight} 
For a fixed quiver $\qui$ and a fixed dimension vector $\alpha$,  
up to isomorphism there are only finitely many possible moduli spaces 
$\moduli(\qui,\alpha,\theta)$. 
\end{lemma}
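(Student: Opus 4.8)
The plan is to exhibit a finite wall-and-chamber decomposition of the space of weights on which the semistable locus, and hence the moduli space, is locally constant. First I would dispose of the weights $\theta$ with $\sum_{v\in\qui_0}\theta(v)\alpha(v)\neq 0$: for such $\theta$ no representation of dimension vector $\alpha$ can be $\theta$-semistable (the defining equality $\sum_{v}\theta(v)\dim R_v=0$ fails), so $\rep(\qui,\alpha)^{\theta\text{-sst}}=\emptyset$ and $\moduli(\qui,\alpha,\theta)$ is empty; equivalently $\semi(\qui,\alpha,\theta)$ is concentrated in degree $0$, whose $\mathrm{Proj}$ is empty. This accounts for a single isomorphism class, so it remains to handle weights in the hyperplane $H:=\{\theta\in\mathbb{R}^{\qui_0}\mid \sum_{v}\theta(v)\alpha(v)=0\}$.

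Next I would introduce the finite set $P:=\{\beta\in\mathbb{N}^{\qui_0}\mid 0\le\beta\le\alpha\}$ (of cardinality at most $\prod_{v\in\qui_0}(\alpha(v)+1)$), and for each $\beta\in P$ the hyperplane $H_\beta:=\{\theta\in\mathbb{R}^{\qui_0}\mid \sum_{v}\theta(v)\beta(v)=0\}$. These finitely many hyperplanes form an arrangement partitioning $\mathbb{R}^{\qui_0}$ into finitely many relatively open faces, on each of which the sign vector $(\mathrm{sign}\sum_{v}\theta(v)\beta(v))_{\beta\in P}$ is constant. The decisive observation is that, by the definition of $\theta$-semistability (King's numerical criterion), a point $x\in\rep(\qui,\alpha)$ is $\theta$-semistable exactly when $\sum_{v}\theta(v)\beta(v)\ge 0$ for every dimension vector $\beta$ of a subrepresentation of $x$, and every such $\beta$ satisfies $0\le\beta\le\alpha$, hence lies in $P$. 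Consequently, if two weights $\theta,\theta'$ lie in a common face of the arrangement then they impose identical sign conditions over $P$, so $\rep(\qui,\alpha)^{\theta\text{-sst}}=\rep(\qui,\alpha)^{\theta'\text{-sst}}$ (and similarly for the stable loci).

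Finally I would show that equal semistable loci force isomorphic moduli spaces. If $U:=\rep(\qui,\alpha)^{\theta\text{-sst}}=\rep(\qui,\alpha)^{\theta'\text{-sst}}$, then both $\moduli(\qui,\alpha,\theta)$ and $\moduli(\qui,\alpha,\theta')$ are good quotients of one and the same $\GL(\alpha)$-variety $U$; since a good quotient is a categorical quotient and is therefore unique up to isomorphism (see \cite{newstead}), the two moduli spaces are isomorphic. Because the arrangement has only finitely many faces and every integral weight of $H$ lies in one of them, there are only finitely many isomorphism classes among the $\moduli(\qui,\alpha,\theta)$ with $\theta\in H$, and adding the empty moduli space yields the statement. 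The step I expect to demand the most care is this last one: rather than comparing the graded algebras $\semi(\qui,\alpha,\theta)$ directly—whose gradings genuinely differ even within a single face—one must argue through the uniqueness of good (categorical) quotients to conclude that two linearizations sharing a semistable locus produce isomorphic GIT quotients.
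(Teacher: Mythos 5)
Your proposal is correct and follows essentially the same route as the paper: the paper records for each weight $\theta$ the pair of sets $\theta_0=\{\beta\le\alpha\mid\sum_v\theta(v)\beta(v)=0\}$ and $\theta_-=\{\beta\le\alpha\mid\sum_v\theta(v)\beta(v)<0\}$, observes that this data determines $\rep(\qui,\alpha)^{\theta\text{-sst}}$ and $\rep(\qui,\alpha)^{\theta\text{-st}}$ and hence the quotient, and concludes by finiteness of the possible pairs --- which is exactly your sign-vector/hyperplane-arrangement bookkeeping over $P=\{\beta\mid 0\le\beta\le\alpha\}$. Your added care about the degenerate case $\sum_v\theta(v)\alpha(v)\neq 0$ and about deducing isomorphism of the quotients from equality of the semistable loci via uniqueness of good quotients only makes explicit what the paper leaves implicit.
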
 

\begin{proof} 
For a weight $\theta\in \mathbb{Z}^{\qui_0}$ set 
$\theta_0:=\{\beta\in \mathbb{N}^{\qui_0}\mid \beta\le \alpha,\ 
\sum_{v\in \qui_0}\theta(v)\beta(v)=0\}$ 
and 
$\theta_-:=\{\beta\in \mathbb{N}^{\qui_0}\mid \beta\le \alpha,\ 
\sum_{v\in \qui_0}\theta(v)\beta(v)<0\}$.  
Now $(\theta_0,\theta_-)=(\theta'_0,\theta'_-)$ implies that 
$\rep(\qui,\alpha)^{\theta\tiny{\text{-sst}}}=\rep(\qui,\alpha)^{\theta'\tiny{\text{-sst}}}$
and 
$\rep(\qui,\alpha)^{\theta\tiny{\text{-st}}}=\rep(\qui,\alpha)^{\theta'\tiny{\text{-st}}}$.  
Therefore 
$(\theta_0,\theta_-)=(\theta'_0,\theta'_-)$ implies that 
\[\moduli(\qui,\alpha,\theta)=\rep(\qui,\alpha)^{\theta\tiny{\text{-sst}}}/\negmedspace/\GL(\alpha)
=\rep(\qui,\alpha)^{\theta'\tiny{\text{-sst}}}/\negmedspace/\GL(\alpha)
=\moduli(\qui,\alpha,\theta').\]  
As there are only finitely many sets of dimension vectors $\beta$ with $\beta\le\alpha$ for a fixed $\alpha$, there are only finitely many possible pairs of sets of the form $(\theta_0,\theta_-)$, where 
$\theta\in \mathbb{Z}^{\qui_0}$. 
\end{proof} 

\bigskip
\begin{proofof}{Corollary~\ref{cor:sketch}}
Assume that  $X$ is a $d$-dimensional algebraic variety, and 
$X\cong \moduli(\qui,\alpha,\theta)$, where $(\qui,\alpha)\in \mathcal{C}$ and 
$\alpha$ is $\theta$-stable. We have $d=1-\langle \alpha,\alpha\rangle_{\qui}$ by 
\eqref{eq:dim of stable moduli}. 
Moreover, by Corollary~\ref{cor:main} we may assume that $(\qui,\alpha)$ is 
$\tau\sigma$-minimal, so it belongs to the 
set 
\[\mathcal{D}:=\{(\qui',\alpha')\in \mathcal{C}\mid 
1-\langle \alpha',\alpha'\rangle_{\qui'}=d \mbox{ and } (\qui',\alpha')\mbox{ is }\tau\sigma\mbox{-minimal in }\mathcal{C}\}.\] 
By assumption the set $\mathcal{D}$ is finite.  

 Thus $X$ is isomorphic to a moduli space associated with some element of the finite set 
$\mathcal{D}$. By Lemma~\ref{lemma:fixed dim, varying weight} there are only finitely many such varieties, so we are done with the proof. 
\end{proofof}

\bigskip 
\begin{proofof}{Theorem~\ref{thm:fundamental domain}} 
Let $X$ be an algebraic variety with $\dim(X)\le d$ and $X\cong \moduli(\qui,\alpha,\theta)$ where $\qui$ is $\alpha$-stable and $\alpha\in F_{\qui}$. As we pointed out 
in Section~\ref{subsec:quivrep}, we may assume that $\alpha$ is a sincere dimension vector for $\qui$. Then both the $\theta$-stability of $\alpha$ or $F_{\ugqui}\neq\emptyset$ imply  that $\ugqui$ is connected. If $\qui$ is an extended Dynkin quiver, then  $\moduli(\qui,\alpha,\theta)$ is isomorphic to a projective or affine space by 
 \cite[Theorem 3.1]{domokos:gmj} (see also  \cite{skowronski-weyman}, or 
\cite{domokos-lenzing:1}, \cite{domokos-lenzing:2}). So $X$ belongs to a finite set 
$\mathcal{X}'$ of algebraic varieties in this case. 
Otherwise $\qui$ is wild, and $X$ is isomorphic to an element of $\mathcal{X}''$, where 
$\mathcal{X}''$ is a complete set of representatives of the isomorphism classes of the varieties $\moduli(\qui',\alpha',\theta')$, with $\alpha'$ a $\theta'$-stable dimension vector and $(\qui',\alpha')$ belonging to 
\[\mathcal{C}:=\{(\qui',\alpha') \mid \qui'\mbox{ is  a wild connected quiver, }\alpha'\in F_{\qui'},\ 
\mathrm{supp}(\alpha')=\qui'
\}.\] 
The condition of Corollary~\ref{cor:sketch} holds for $\mathcal{C}$ by 
Theorem~\ref{thm:bounding the quiver}. 
Therefore by Corollary~\ref{cor:sketch}, $\mathcal{X}''$ is finite. 
Thus $X$ is isomorphic to a variety in the finite set $\mathcal{X}'\cup \mathcal{X}''$, 
finishing the proof. 
\end{proofof} 

\section{Products of quiver moduli spaces}\label{sec:products} 

\begin{proofof}{Proposition~\ref{prop:product}} 
Let $X$ be a projective algebraic variety such that $X\cong \moduli(\qui,\alpha,\theta)$. 
We may assume that $\alpha$ is a $\theta$-semistable 
sincere dimension vector for $\qui$.   Then necessarily the quiver $\qui$ is acyclic. 
Note that in this case a general $\alpha$-dimensional representation of $\qui$ is $\theta$-semistable.
Following \cite[Definition 3.10]{derksen-weyman} we say that 
\begin{equation}\label{eq:theta-stable decomposition}
\alpha=m_1\cdot\alpha_1\dot{+}m_2\cdot \alpha_2\dot{+}\cdots\dot{+}m_s\cdot\alpha_s
\end{equation} is 
\emph{the $\theta$-stable decomposition of}  $\alpha$  if a general $\alpha$-dimensional representation of $\qui$ has a composition series in the category of $\theta$-semistable 
representations with $m_i$ composition factors of dimension vector $\alpha_i$ for $i=1,\dots,s$ (so the $\alpha_i$ are $s$ distinct $\theta$-stable dimension vectors for $\qui$). 
As a special case of \cite[Theorem 1.4]{chindris} (dealing with quivers with relations and 
inspired by \cite[Theorem 3.16]{derksen-weyman}), \eqref{eq:theta-stable decomposition}  implies  
\begin{equation}\label{eq:symm product} 
\moduli(\qui,\alpha,\theta)\cong \mathrm{S}^{m_1}(\moduli(\qui,\alpha_1,\theta))\times \cdots 
\times \mathrm{S}^{m_s}(\moduli(\qui,\alpha_s,\theta)),\end{equation} 
where $\mathrm{S}^m(Y)$ stands for the $m$th symmetric power of the projective algebraic variety $Y$.   
If $\alpha_i$ in \eqref{eq:theta-stable decomposition} is a real root, then $\moduli(\qui,\alpha_i,\theta)$ is a point, and the factor $\mathrm{S}^{m_i}(\moduli(\qui,\alpha_i,\theta))$ can be omitted in \eqref{eq:symm product}. 
If $\alpha_i$ in \eqref{eq:theta-stable decomposition} is an isotropic imaginary root, 
then $\mathrm{supp}(\alpha_i)$ is an extended Dynkin quiver. 
It is well known (see \cite[Theorem 3.1]{domokos:gmj}, or \cite{skowronski-weyman}, or 
\cite{domokos-lenzing:1}, \cite{domokos-lenzing:2}) that either 
$\moduli(\qui,\alpha_i,\theta)$ is a point 
(and then the corresponding factor in \eqref{eq:symm product} can be omitted), 
or $\moduli(\qui,\alpha_i,\theta)$ is a projective line, and hence 
$\mathrm{S}^{m_i}(\moduli(\qui,\alpha_i,\theta))\cong \mathbb{P}^{m_i}$. 
If $\alpha_i$ is imaginary nonisotropic, then $\mathrm{supp}(\alpha_i)$ is a connected wild quiver, and by \cite[Proposition 3.15(b)]{derksen-weyman}, we have $m_i=1$. 
As we noted at the end of Section~\ref{subsec:quivrep}, 
$\moduli(\qui,\alpha_i,\theta)\cong \moduli(\mathrm{supp}(\alpha_i),\alpha_i\vert_{\mathrm{supp}(\alpha_i)},\theta\vert_{\mathrm{supp}(\alpha_i)})$, and 
$\alpha_i\vert_{\mathrm{supp}(\alpha_i)}$ is a $\theta\vert_{\mathrm{supp}(\alpha_i)}$-stable dimension vector for the acyclic quiver $\mathrm{supp}(\alpha_i)$. 
This finishes the proof of the "only if" statement. 

The "if" statement follows by Lemma~\ref{lemma:disjoint union} below and the well-known fact 
that $\moduli(\qui,(n,n),(-1,1))\cong \mathbb{P}^n$, where $\qui$ is the Kronecker quiver with 
two vertices and two arrows pointing from the first vertex to the second.  
\end{proofof} 

In the above proof we used the following obvious statement relating the case when  
our quiver $\qui$ is the disjoint union of the quivers 
$\qui'$ and $\qui''$. For a sincere dimension vector 
$\alpha\in \mathbb{N}^{\qui_0}$ and for a weight $\theta\in \mathbb{Z}^{\qui_0}$ set 
$\alpha':=\alpha\vert_{\qui'_0}$, $\alpha'':=\alpha\vert_{\qui''_0}$, 
$\theta':=\theta\vert_{\qui'_0}$, $\theta'':=\theta\vert_{\qui''_0}$. 

\begin{lemma} \label{lemma:disjoint union} 
Let $\qui$, $\alpha$, $\theta$ be as above. 
Then $\alpha$ is $\theta$-semistable for  $\qui$ if and only if 
$\alpha'$ is $\theta'$-semistable for $\qui'$ and $\alpha''$ is $\theta''$-semistable  for 
$\qui''$. Moreover, in this case the variety 
$\moduli(\qui,\alpha,\theta)$ is isomorphic to the product of the varieties 
$\moduli(\qui',\alpha',\theta')$ and $\moduli(\qui'',\alpha'',\theta'')$. 
\end{lemma}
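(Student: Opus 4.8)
The plan is to exploit that a disjoint union of quivers has no arrows linking the two parts, so that every structure in sight splits as a product. First I would record the three basic product identities: $\rep(\qui,\alpha)=\rep(\qui',\alpha')\times\rep(\qui'',\alpha'')$ (a representation of $\qui$ is nothing but a pair $(R',R'')$), the factorization $\GL(\alpha)=\GL(\alpha')\times\GL(\alpha'')$ acting componentwise, and the observation that a subrepresentation of $(R',R'')$ is exactly a pair $(S',S'')$ with $S'\le R'$ and $S''\le R''$, since no arrow forces interaction between the two components. I would also note that the weight functional is additive over the components: for a representation $M=(M',M'')$ of $\qui$ one has $\sum_{v\in\qui_0}\theta(v)\dim_\F M_v=\sum_{v\in\qui'_0}\theta'(v)\dim_\F M'_v+\sum_{v\in\qui''_0}\theta''(v)\dim_\F M''_v$.

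For the semistability equivalence I would argue as follows. If $R'$ is $\theta'$-semistable and $R''$ is $\theta''$-semistable, then both component weights vanish by definition of semistability, their sum is zero, and any subrepresentation $(S',S'')$ has weight $\sum_v\theta'(v)\dim_\F S'_v+\sum_v\theta''(v)\dim_\F S''_v\ge 0$; hence $(R',R'')$ is $\theta$-semistable. Conversely, suppose $R=(R',R'')$ is $\theta$-semistable. The crucial trick is to test the two subrepresentations $(R',0)$ and $(0,R'')$: each yields a nonnegative weight, but the two weights sum to the total weight $0$, so the $\qui'$-weight of $R'$ and the $\qui''$-weight of $R''$ both vanish. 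Feeding every $S'\le R'$ into $(S',0)\le R$ then shows $R'$ is $\theta'$-semistable, and symmetrically for $R''$. Since $\rep(\qui,\alpha)$ is the product, the existence of a semistable representation on each side is equivalent to the existence of one for $\qui$, which is the asserted equivalence for dimension vectors.

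For the isomorphism of moduli spaces I would pass to graded algebras. The coordinate ring factors as $\F[\rep(\qui,\alpha)]=\F[\rep(\qui',\alpha')]\otimes_\F\F[\rep(\qui'',\alpha'')]$, and since $\GL(\alpha')$ and $\GL(\alpha'')$ act on disjoint sets of variables the space of $n\theta$-relative invariants splits: choosing a basis of one tensor factor and comparing eigenvalues gives $\semi(\qui,\alpha,\theta)_n=\semi(\qui',\alpha',\theta')_n\otimes_\F\semi(\qui'',\alpha'',\theta'')_n$ for every $n$. Thus $\semi(\qui,\alpha,\theta)$ is the Segre product of $\semi(\qui',\alpha',\theta')$ and $\semi(\qui'',\alpha'',\theta'')$, whose projective spectrum is the product of the two projective spectra under the Segre embedding, namely $\moduli(\qui',\alpha',\theta')\times\moduli(\qui'',\alpha'',\theta'')$. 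Equivalently, one may phrase the whole argument geometrically: the semistable locus factors as $\rep(\qui,\alpha)^{\theta\text{-sst}}=\rep(\qui',\alpha')^{\theta'\text{-sst}}\times\rep(\qui'',\alpha'')^{\theta''\text{-sst}}$, and the GIT quotient of a product by a product of groups acting componentwise is the product of the GIT quotients.

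The routine-but-necessary obstacle is bookkeeping of the degenerate weights in the definition of $\semi$: when one of $\theta'$, $\theta''$ is $0$ the corresponding factor is an affine quotient $\mathrm{Spec}\,\F[\rep]^{\GL}$ rather than a genuine projective spectrum, and one must check that the Segre-product identity still yields the correct product of varieties in that case. Apart from this, and the weight-vanishing trick in the semistability argument, everything follows directly from the product structure, consistent with the statement being flagged as obvious.
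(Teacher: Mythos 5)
Your proof is correct. The paper offers no argument for Lemma~\ref{lemma:disjoint union} at all --- it is introduced as an ``obvious statement'' --- and what you wrote is exactly the justification being taken for granted: the componentwise splitting of $\rep(\qui,\alpha)$, of $\GL(\alpha)$ and of the subrepresentation lattice, the test subrepresentations $(R',0)$ and $(0,R'')$ forcing both component weights to vanish, and the resulting identity $\semi(\qui,\alpha,\theta)_n=\F[\rep(\qui',\alpha')]^{\GL(\alpha'),n\theta'}\otimes_{\F}\F[\rep(\qui'',\alpha'')]^{\GL(\alpha''),n\theta''}$ whose $\mathrm{Proj}$ is the product via the Segre embedding. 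You are also right that the only point requiring care is the paper's convention that $\semi(\qui,\alpha,0)_n=0$ for $n>0$, which makes the graded-piece identity fail literally when exactly one of $\theta'$, $\theta''$ is zero; your geometric reformulation (the $\theta$-semistable locus is the product of the $\theta'$- and $\theta''$-semistable loci, and the GIT quotient of a product by a product of groups acting componentwise is the product of the quotients) handles that degenerate case uniformly.
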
 


\section{Some consequences of Theorem~\ref{thm:fundamental domain}}
\label{sec:consequences} 

\begin{proposition}\label{prop:stable decomp for 2 or n1}
Let $\qui$ be an acyclic quiver, $\alpha$ a dimension vector for $\qui$ satisfying condition 
(i) (respectively (ii)) in the statement of Proposition~\ref{prop:2 or n1}, and let $\theta$ be a weight for $\qui$ such that $\alpha$ is $\theta$-semistable. Consider the 
$\theta$-stable decomposition  (cf. the proof of Proposition~\ref{prop:product}) 
\[\alpha=m_1\cdot\alpha_1\dot{+}m_2\cdot \alpha_2\dot{+}\cdots\dot{+}m_s\cdot\alpha_s\] 
of $\alpha$. Then the pairs $(\mathrm{supp}(\alpha_i),\alpha_i)$ also satisfy condition (i) 
(respectively (ii)) of  Proposition~\ref{prop:2 or n1}. 
\end{proposition}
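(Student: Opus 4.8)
The plan is to read off the $\theta$-stable decomposition exactly as in the proof of Proposition~\ref{prop:product}: a general representation $R$ of the acyclic quiver $\qui$ with $\underline{\dim}R=\alpha$ is $\theta$-semistable, and summing dimension vectors over a Jordan--H\"older filtration of $R$ in the category of $\theta$-semistable representations gives the identity $\alpha=\sum_{i=1}^s m_i\alpha_i$ in $\mathbb{N}^{\qui_0}$. Since every $m_i\ge 1$, this already forces $\alpha_i\le\alpha$ componentwise. In case (i) the argument ends here: $\alpha_i(v)\le\alpha(v)\le 2$ for all $v$. So the whole content is case (ii), where $\alpha(v)=n$ for every $v$ and I must upgrade the bound $\alpha_i(v)\le n$ to the assertion that each $\alpha_i$ is constant on its (connected, as $\theta$-stable representations are bricks) support $\Sigma_i:=\mathrm{supp}(\alpha_i)$.

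The decisive consequence of $\alpha=n\cdot\mathbf 1$ is that every arrow map of $R$ is a square $n\times n$ matrix, hence an isomorphism. I would exploit this twice. First, for any set $U\subseteq\qui_0$ closed under outgoing arrows, taking $R_v$ at each $v\in U$ and $0$ elsewhere defines a subrepresentation $R_U$ of dimension $n\cdot\mathbf 1_U$; semistability of $R$ then gives $\Theta(U):=\sum_{v\in U}\theta(v)\ge 0$ for every such $U$ (and $\Theta(\qui_0)=0$). Second, the bottom term $T:=R^1$ of the filtration is a $\theta$-stable subrepresentation, so each of its arrow maps, being the restriction of an isomorphism, is injective; hence $\Sigma_1$ is closed under outgoing arrows, the dimension vector $\gamma:=\alpha_1$ is non-decreasing along arrows, and every super-level set $U_{\ge c}:=\{v\in\Sigma_1:\gamma(v)\ge c\}$ is again closed under outgoing arrows.

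The key step is a telescoping identity. Writing $\gamma(v)=\sum_{c\ge 1}\mathbf 1[\gamma(v)\ge c]$ gives $\sum_{c\ge 1}\Theta(U_{\ge c})=\sum_v\theta(v)\gamma(v)=0$, the last equality because $T$ is $\theta$-semistable. As each summand $\Theta(U_{\ge c})\ge 0$, I conclude $\Theta(U_{\ge c})=0$ for all $c$. Now if $\gamma$ were non-constant on $\Sigma_1$, then for $M:=\max_{\Sigma_1}\gamma$ the set $U_{\ge M}$ would be a proper nonempty successor-closed subset, and the corresponding subrepresentation of $T$ would have weight $M\,\Theta(U_{\ge M})=0$, contradicting $\theta$-stability of $T$. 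Hence the bottom factor is constant on $\Sigma_1$. Moreover $\Theta(\Sigma_1)=\Theta(U_{\ge 1})=0$, so $R_{\Sigma_1}$ is a weight-$0$ (slope-$0$) subrepresentation of $R$ with slope-$0$ quotient; the stable factors of $R$ are therefore those of $R_{\Sigma_1}$ together with those of $R/R_{\Sigma_1}$. Both of these are general representations of constant dimension vector $n\cdot\mathbf 1$ on the proper subquivers supported on $\Sigma_1$ and on $\qui_0\setminus\Sigma_1$ (the latter because $\Sigma_1$ is successor-closed), so induction on $|\qui_0|$ finishes the proof; the only degenerate case, $\Sigma_1=\qui_0$, makes $\gamma=n_1\cdot\mathbf 1$ sincere and constant and leaves a quotient of constant dimension $(n-n_1)\mathbf 1$, handled by induction on $|\alpha|$.

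The main obstacle is precisely the interior of the filtration: the monotonicity $\dim S_{\source a}\le\dim S_{\target a}$ is visible only for honest subrepresentations, so it controls the bottom stable factor but not a general subquotient $R^k/R^{k-1}$, whose dimension vector (a difference of two monotone functions) need not be monotone and whose super-level sets need not be successor-closed. The way around this is to never argue about interior factors directly, but to peel off the successor-closed piece $\Sigma_1$ using the vanishing $\Theta(\Sigma_1)=0$ --- the same vanishing that produces constancy of the bottom factor simultaneously guarantees a slope-$0$ splitting of $R$ into strictly smaller constant-dimension problems. The one point requiring care (and the place where I would lean on the genericity already invoked for the $\theta$-stable decomposition) is that the restriction $R_{\Sigma_1}$ and the quotient $R/R_{\Sigma_1}$ are again general representations of their dimension vectors, so that their Jordan--H\"older factors really do compute the stable decompositions to which the induction hypothesis applies.
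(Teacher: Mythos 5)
Your case (i) and the ``non-degenerate'' part of case (ii) are sound, and the route is genuinely different from the paper's: the telescoping identity $\sum_{c\ge 1}\Theta(U_{\ge c})=\sum_v\theta(v)\gamma(v)=0$ over the successor-closed super-level sets of the bottom stable factor is a nice, self-contained way to get constancy of that factor, and the slope-$0$ splitting along $\Sigma_1$ (together with the observation that the restriction of a general representation to a full subquiver is again general, since restriction is a surjective linear projection of representation spaces) legitimately reduces to smaller quivers when $\Sigma_1\neq \qui_0$.

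The gap is exactly where you park the difficulty: the degenerate case $\Sigma_1=\qui_0$ with $n_1<n$. There you propose to handle the remaining Jordan--H\"older factors ``by induction on $|\alpha|$,'' applied to the quotient $R/T$ of constant dimension vector $(n-n_1)\mathbf 1$. But the inductive statement is about the $\theta$-stable decomposition, which by definition is computed from a \emph{general} representation of the given dimension vector, and $R/T$ --- the quotient of a general $R$ by a stable subrepresentation $T$ that depends on $R$ --- is not known to be general in $\rep(\qui,(n-n_1)\mathbf 1)$. A non-general semistable representation of constant dimension vector can perfectly well have Jordan--H\"older factors different from (and not constant like) the stable decomposition of its dimension vector, so the induction hypothesis gives you nothing about $R/T$. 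None of your structural tools apply to these interior factors either, as you yourself note: they are subquotients, not subrepresentations, so their dimension vectors need not be monotone along arrows. This degenerate case is precisely the content that the paper outsources: it first proves that $\underline 1$ is $\theta$-semistable (essentially your $\Theta(U)\ge 0$ observation for successor-closed $U$), and then invokes \cite[Proposition 3.14]{derksen-weyman}, which says the $\theta$-stable decomposition of $n\underline 1$ is obtained from that of $\underline 1$ by the operation $\beta\mapsto\{n\beta\}$; since the summands of the decomposition of $\underline 1$ are $\le\underline 1$, all resulting factors are constant on their supports. To close your argument you would need either to import that result (which in particular shows that a sincere stable factor forces $\underline 1$ itself to be stable, so that all factors of $n\underline 1$ are multiples of $\underline 1$) or to supply an independent analysis of the stable decomposition of $n\beta$ for $\beta$ a sincere stable constant dimension vector.
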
 

\begin{proof} 
If condition (i) of  Proposition~\ref{prop:2 or n1} holds for $(\qui,\alpha)$, then the same obviously hold for the $\alpha_i$, since we have $\alpha=\sum_{i=1}^sm_i\alpha_i$. 
Suppose next that condition (ii) of  Proposition~\ref{prop:2 or n1} holds for $(\qui,\alpha)$. 
Denote by $\underline{1}$ the dimension vector taking value $1$ on each of the vertices, 
so $\alpha=n\underline{1}$ for some positive integer $n$. Since $\alpha$ is $\theta$-semistable, we have $\sum_{v\in \qui_0}\theta(v)=0$. 
We claim that $\underline{1}$ is $\theta$-semistable. Indeed, suppose for contradiction that  there exists a 
dimension vector $\beta$ with $\beta\le \underline{1}$ such that 
$\sum_{v\in\qui_0}\theta(v)\beta(v)<0$, and all representations of $\qui$ 
with dimension vector $\underline{1}$ have a subrepresentation with dimension vector 
$\beta$. This holds in particular for the representation assigning $1$ to each arrow of $\qui$. It follows that $\mathrm{supp}(\beta)$ is successor-closed (i.e. if 
$\source a\in \mathrm{supp}(\beta)_0$ for some $a\in \qui_1$, then 
$\target a  \in \mathrm{supp}(\beta)_0$). This implies that all representations of 
dimension vector $\alpha=n\underline{1}$ have a subrepresentation with dimension vector 
$n\beta$, and $\sum_{v\in\qui_0}\theta(v)(n\beta)(v)=n\sum_{v\in\qui_0}\theta(v)\beta(v)<0$, 
contrary to the assumption that $\alpha$ is $\theta$-semistable. 
So $\underline{1}$ is $\theta$-semistable, and we can take its $\theta$-stable decomposition 
\[\underline{1}=\gamma_1\dot{+}\cdots\dot{+}\gamma_t.\] 
By \cite[Proposition 3.14]{derksen-weyman} the $\theta$-stable decomposition of 
$n\underline{1}$ is 
\begin{equation}\label{eq:n underline 1}n\underline{1}=\{n\gamma_1\}\dot{+}\cdots\dot{+}\{n\gamma_s\}\end{equation} 
where 
\[\{n\beta\}=\begin{cases} \underbrace{\beta\dot{+}\cdots\dot{+}\beta}_n 
\text{ if }\beta\text{ is real or imaginary isotropic};\\
n\beta\text{ if }\beta\text{ is imaginary nonisotropic}. 
\end{cases}\] 
Now $(\mathrm{supp}(\gamma_i),\gamma_i)$ and $(\mathrm{supp}(\gamma_i),n\gamma_i)$ clearly satisfy (ii) of Proposition~\ref{prop:2 or n1}. 
\end{proof} 

\begin{proposition}\label{prop:FQ 2 or n1} 
If $(\qui,\alpha)$ is $\tau\sigma$-minimal in the set $\mathcal{C}$ of sincere 
quiver-dimension vector pairs satisfying (i) or (ii) of Proposition~\ref{prop:2 or n1}, and $\qui$ is connected, then $\alpha\in F_{\qui}$ or $\qui_1=\emptyset$.  
\end{proposition}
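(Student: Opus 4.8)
The plan is to argue by contradiction. If $\qui_1 = \emptyset$ there is nothing to prove, so I assume $\qui_1 \neq \emptyset$ and $\alpha \notin F_{\qui}$, and I will produce a single reduction step transforming $(\qui,\alpha)$ into an element of $\mathcal{C}$ that is strictly smaller in the sense of Definition~\ref{def:tausigma-minimal}, contradicting $\tau\sigma$-minimality.

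Since $\alpha$ is sincere and $\qui$ is connected, $\mathrm{supp}(\alpha) = \qui$ is connected, so $\alpha \in F_{\qui}$ is equivalent to $(\alpha,\varepsilon_v)_{\ugqui} \le 0$ for every $v$. Hence $\alpha \notin F_{\qui}$ provides a \emph{witness} $u$ with $(\alpha,\varepsilon_u)_{\ugqui} > 0$; abbreviating $T(u) := \sum_{b \in \qui_1:\ \target b = u}\alpha(\source b)$ and $S(u) := \sum_{c \in \qui_1:\ \source c = u}\alpha(\target c)$, formula \eqref{eq:alphaepsilon} rewrites this as $2\alpha(u) > S(u) + T(u)$. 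I would first record two structural facts about any witness: it carries no loop (a loop at $u$ would contribute $\alpha(u)$ to each of $S(u)$ and $T(u)$, forcing $S(u)+T(u) \ge 2\alpha(u)$), and it satisfies $\deg_{\ugqui}(u) > 0$ (an isolated vertex of a connected quiver would force $\qui_1 = \emptyset$). Thus $\tau_u$ is legitimate as soon as $u$ is large.

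The argument then splits according to which of the conditions (i), (ii) the pair satisfies. Under (ii), with $\alpha \equiv n$, the quantities $S(u)$ and $T(u)$ are $n$ times the out- and in-degree of $u$, so the witness inequality forces $\deg_{\ugqui}(u) = 1$; then $u$ is a source or sink with $\max\{S(u),T(u)\} = n = \alpha(u)$, hence \emph{large}, and $\tau_u$ deletes $u$ while leaving every remaining value equal to $n$ --- an element of $\mathcal{C}$ with fewer vertices. Under (i), with $\alpha(v) \le 2$, if $u$ is large I again apply $\tau_u$: the restricted dimension vector is sincere and still bounded by $2$, so lies in $\mathcal{C}$ and has fewer vertices. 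If $u$ is \emph{not} large, then $\max\{S(u),T(u)\} > \alpha(u)$; combined with $S(u)+T(u) < 2\alpha(u) \le 4$ this pins down $\alpha(u) = 2$ and $\{S(u),T(u)\} = \{3,0\}$, so $u$ is a small source or small sink. Applying $\sigma_u$ alters only the value at $u$, replacing $\alpha(u) = 2$ by $-\alpha(u) + \max\{S(u),T(u)\} = 1 \le 2$; the outcome is sincere, still satisfies (i), keeps the vertex set, and has total dimension $|\alpha| - 1 < |\alpha|$. In every case we contradict minimality.

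I expect the only delicate point to be this last, non-large case under (i): one must check that the $\sigma_u$-image really lands in $\mathcal{C}$ and is strictly smaller. This is precisely where the witness inequality $S(u) < 2\alpha(u)$ does the work, guaranteeing $-\alpha(u) + S(u) < \alpha(u)$ so that the reflection \emph{decreases} the value at $u$ instead of increasing it; condition (i) then forces the decrease to land on an admissible value. The remaining bookkeeping is light, since the inequalities pin down all the relevant numbers and automatically exclude the spurious subcase $\alpha(u) = 1$ (where $\max\{S(u),T(u)\}$ could not exceed $\alpha(u)$). The edge case $|\qui_0| = 1$ needs no special treatment beyond the opening reduction, as a single vertex bearing only loops already lies in $F_{\qui}$.
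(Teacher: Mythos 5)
Your proof is correct and follows essentially the same strategy as the paper: exhibit a witness vertex $u$ with $(\alpha,\varepsilon_u)_{\ugqui}>0$ and show that either $\tau_u$ or $\sigma_u$ produces an element of $\mathcal{C}$ with fewer vertices or smaller total dimension, contradicting $\tau\sigma$-minimality. Your case split by ``large versus not large'' (rather than the paper's split by $\alpha(u)$ and $\deg_{\ugqui}(u)$) is a slightly cleaner organization of the same bookkeeping, and the pinning-down of $\{S(u),T(u)\}=\{3,0\}$ in the non-large case is sound.
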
 
\begin{proof} Suppose that $\qui_1$ is non-empty, $(\qui,\alpha)$ is $\tau\sigma$-minimal in $\mathcal{C}$, and $\qui$ is connected. 
 
Assume first that $(\qui,\alpha)$ satisfies condition (ii) of Proposition~\ref{prop:2 or n1} and is $\tau\sigma$-minimal in $\mathcal{C}$. 
If $v\in \qui_0$ has degree $1$, then $v$ is a large vertex for $(\qui,\alpha)$. 
Applying $\tau_v$ we would get a pair in $\mathcal{C}$ with one less arrow, so $(\qui,\alpha)$ would not be $\tau\sigma$-minimal. Therefore all vertices of the connected quiver $\qui$ have  
degree at least two. Since $\qui$ is connected, the assumption on the dimension vector $\alpha$ implies that $\alpha\in F_{\qui}$. 

Assume next that $(\qui,\alpha)$ satisfies condition (i) of Proposition~\ref{prop:2 or n1}.  
Suppose for contradiction that $\qui$ has 
a vertex $u$ such that $(\alpha,\varepsilon_u)_{\ugqui}>0$. It follows that there is no loop at $u$, and $\deg_{\qui}(u)\ge 1$ (by connectedness of $\qui$).  

If $\alpha(u)=1$, then necessarily $\deg_{\ugqui}(u)=1$, and $\alpha(w)=1$ for  
the only vertex $w$ connected to $u$ by an edge in $\ugqui$. So $u$ is a large vertex in 
$(\qui,\alpha)$, and $\tau_u(\qui,\alpha)$ belongs to $\mathcal{C}$ with 
$|\tau_u\qui_0|<|\qui_0|$, contrary to the $\tau\sigma$-minimality of $(\qui,\alpha)$.  

It remains to deal with the case $\alpha(u)=2$. Then $\deg_{\ugqui}(u)\le 3$ follows from 
$(\alpha,\varepsilon_u)_{\ugqui}>0$.  
If $\deg_{\qui}(u)\in \{1, 2\}$, then $u$ is a large vertex for $(\qui,\alpha)$, or $u$ is a small sink or a small source for $(\qui,\alpha)$. Clearly, 
in the first case $\tau_u(\qui,\alpha)\in \mathcal{C}$ with $|\tau_u\qui_0|<|\qui_0|$, 
whereas in the second case 
$\sigma_u(\qui,\alpha)\in \mathcal{C}$ with $|\sigma_u\qui_0|=|\qui_0|$ and 
$|\sigma_u\alpha|<|\alpha|$ (because $\sigma_u\alpha(u)= 2+1-2=1<2=\alpha(u)$), contrary to the $\tau\sigma$-minimality of $(\qui,\alpha)$. 

Finally, 
if $\deg_{\ugqui}(u)=3$, then the value of $\alpha$ at the end vertices of the three arrows adjacent to $u$ is $1$.  If $u$ is a source or a sink, then we may apply $\sigma_u$ to get 
the pair $\sigma_u(\qui,\alpha)\in \mathcal{C}$ with $\sigma_u\alpha(u)=1+1+1-2=1<\alpha(u)$, contrary to the $\tau\sigma$-minimality of $(\qui,\alpha)$. 
If $u$ is neither a source nor a sink, then it is necessarily a large vertex for $(\qui,\alpha)$, 
for which $\tau_u$ can be applied, and since $\tau_u(\qui,\alpha)\in \mathcal{C}$, this 
contradicts the assumption that $(\qui,\alpha)$ is $\tau\sigma$-minimal. 

Summarizing, we showed that  the connected quiver $\qui$ can not have a 
vertex $u$ with $(\alpha,\varepsilon_u)_{\qui}>0$. That is, $\alpha\in F_{\ugqui}$. 
\end{proof}

\bigskip
\begin{proofof}{Proposition~\ref{prop:2 or n1}}
Let $\qui$ be an acyclic quiver and $\alpha$ a sincere dimension vector  for $\qui$ satisfying (i) or (ii) in the statement of Proposition~\ref{prop:2 or n1}, and let $\theta$ be a weight for $\qui$ such that $\alpha$ is $\theta$-semistable. 
By Proposition~\ref{prop:product} and its proof,  
the moduli space  $\moduli(\qui,\alpha,\theta)$ is isomorphic to a product of projective spaces and varieties of the form $\moduli(\qui,\beta,\theta)$, 
where $\beta$ is a summand in the $\theta$-stable decomposition of $\alpha$, 
and $\mathrm{supp}(\beta)$ is a wild quiver.  
By Proposition~\ref{prop:stable decomp for 2 or n1} the pair 
$(\mathrm{supp}(\beta),\beta)$ belongs to the 
set $\mathcal{C}$ of sincere 
quiver-dimension vector pairs satisfying (i) or (ii) of Proposition~\ref{prop:2 or n1}. 
Moreover, $\mathrm{supp}(\beta)$ is $\theta$-stable, and by Corollary~\ref{cor:main} there exists a $\tau\sigma$-minimal sincere pair $(\qui',\beta')\in \mathcal{C}$ and a weight $\theta'$ such that $\beta'$ is $\theta'$-stable (hence in particular, $\qui'$ is connected), 
and 
\[\moduli(\qui,\beta,\theta)=\moduli(\mathrm{supp}(\beta),\beta,\theta)\cong 
\moduli(\qui',\beta',\theta').\]  
Furthermore, by Proposition~\ref{prop:FQ 2 or n1} we have that $\beta'\in F_{\qui'}$, 
so $\moduli(\qui',\beta',\theta')$ belongs to the set $\mathcal{X}$ defined in Section~\ref{sec:main results}, and the proof is complete.  
\end{proofof}


\section{An example} \label{sec:example} 

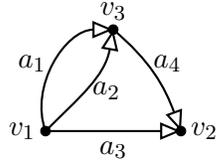
\begin{figure}[h] 
\caption{{\small A $\tau\sigma$-minimal quiver-dimension vector pair not in the fundamental set}} 
\label{figure:1}
\begin{center}
\begin{tikzpicture}[>=open triangle 45,scale=0.9]
\foreach \x in {(0,0),(2,0),(1,1.5)} \filldraw \x circle (2pt); 
\node [left] at (0,0) {$v_1$};
\node [right] at (2,0) {$v_2$};
\node [above] at (1,1.5) {$v_3$};
\node  [right] at (3,1.5) {$\alpha=(\alpha(v_1),\alpha(v_2),\alpha(v_3))=(2,1,3)$};
\node  [right] at (3,0.5) {$\theta=(\theta(v_1),\theta(v_2),\theta(v_3))=(-2,1,1)$};
\node  at (-0.2,1) {$a_1$};
\node at (0.9,0.6) {$a_2$};
\node [below] at (1,0) {$a_3$}; 
\node at (1.8,1) {$a_4$}; 
\draw [->,thick ] (0,0) to [out=110, in=180] (1,1.5); 
\draw [->,thick ] (0,0) to [out=45, in=260] (1,1.5); 
\draw [->,thick ] (0,0) to (2,0); 
\draw [->,thick ] (1,1.5) to [out=320, in =110] (2,0); 
\end{tikzpicture}\quad
\end{center}
\end{figure}

\begin{proposition}\label{prop:example} 
The quiver $\qui$, the dimension vector $\alpha$, and the weight $\theta$ given on Figure~\ref{figure:1} satisfy the following: 
\begin{itemize} 
\item[(i)] $\alpha$ is not contained in the fundamental set $F_{\qui}$. 
\item[(ii)] $\alpha$ is $\theta$-stable. 
\item[(iii)] The pair $(\qui,\alpha)$ can not be transformed by a sequence of operations of the form $\sigma_v$ into a pair $(\qui',\alpha')$ with $\alpha'\in F_{\qui'}$. 
\item[(iv)] The pair $(\qui,\alpha)$ is $\tau\sigma$-minimal among all quiver-dimension vector pairs. 
\end{itemize}  
\end{proposition}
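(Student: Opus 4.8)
The plan is to treat the four claims in order: (i) and (ii) by explicit computation, and then (iii)--(iv) by reducing both to a single reachability statement about the reflection orbit of $\alpha$.

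For (i) I would just evaluate the Cartan form via \eqref{eq:alphaepsilon}: one finds $(\alpha,\varepsilon_{v_1})_{\ugqui}=(\alpha,\varepsilon_{v_2})_{\ugqui}=-3$ while $(\alpha,\varepsilon_{v_3})_{\ugqui}=2\alpha(v_3)-\alpha(v_2)-2\alpha(v_1)=6-1-4=1>0$, so $\alpha\notin F_{\ugqui}$; note also $\langle\alpha,\alpha\rangle_{\qui}=-3$, whence $d=1-\langle\alpha,\alpha\rangle_{\qui}=4$. For (ii) I would exhibit a stable representation with $R_{v_1}=\F^2$, $R_{v_2}=\F$, $R_{v_3}=\F^3$ and
\[ R_{a_1}=\begin{pmatrix}1&0\\0&1\\0&0\end{pmatrix},\quad R_{a_2}=\begin{pmatrix}0&0\\1&0\\0&1\end{pmatrix},\quad R_{a_3}=\begin{pmatrix}1&0\end{pmatrix},\quad R_{a_4}=\begin{pmatrix}0&1&0\end{pmatrix}, \]
and check directly that every proper nonzero subrepresentation $S$ with $\underline\dim(S)=\beta$ satisfies $\sum_v\theta(v)\beta(v)=-2\beta(v_1)+\beta(v_2)+\beta(v_3)>0$. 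Two observations keep the check short: for every nonzero $u\in R_{v_1}$ the vectors $R_{a_1}u,R_{a_2}u$ are linearly independent, and $R_{a_1},R_{a_2}$ jointly surject onto $R_{v_3}$. Hence if $\beta(v_1)=2$ then $\beta(v_3)=3$ and (using $R_{a_3},R_{a_4}\neq 0$) $\beta(v_2)=1$, i.e. $S=R$; if $\beta(v_1)=1$ then $\beta(v_3)\ge 2$ and one of $R_{a_3},R_{a_4}$ forces $\beta(v_2)=1$, giving $\theta\cdot\beta\ge 1$; and if $\beta(v_1)=0$ then $\theta\cdot\beta=\beta(v_2)+\beta(v_3)>0$. (This also records $\theta\cdot\alpha=0$.)

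For (iii) and (iv) the starting point is that each $\sigma_u$ acts on dimension vectors as the Weyl reflection $s_u(\gamma)=\gamma-(\gamma,\varepsilon_u)_{\ugqui}\varepsilon_u$, fixes the underlying graph $\ugqui$, and preserves acyclicity; hence every $\sigma$-reachable dimension vector lies in the single orbit $W\alpha$ and the quiver stays acyclic. Since $a_1,a_2$ are always reversed together, the orientation is encoded by the three edges $\{v_1v_3,v_1v_2,v_2v_3\}$, reflecting at a vertex flips the two incident edges, and a direct check shows the reachable orientations are precisely the three acyclic ones -- $A$ (source $v_1$, sink $v_2$), $B$ (source $v_3$, sink $v_1$), $C$ (source $v_2$, sink $v_3$) -- with transitions $A\leftrightarrow B$ by $s_{v_1}$, $A\leftrightarrow C$ by $s_{v_2}$, $B\leftrightarrow C$ by $s_{v_3}$, while the cyclic orientation is \emph{unreachable}; in particular $v_3$ is reflectable only from $B$ or $C$, never from $A$. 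The decisive monovariant is $|\gamma|$: reflecting at a source or sink $v$ changes it by $-(\gamma,\varepsilon_v)_{\ugqui}$, so $|\gamma|$ strictly decreases exactly when we reflect at a positive-Cartan vertex. At $\alpha$ the only such vertex is $v_3$, which in orientation $A$ is the middle vertex and hence not reflectable, so both admissible moves \emph{increase} $|\gamma|$. On the other hand $W\alpha\cap F_{\ugqui}=\{\delta\}$ with $\delta=(2,1,2)=s_{v_3}\alpha$: by Kac's fundamental-domain property the intersection is a single point, and since $\delta$ lies in the interior of the fundamental chamber ($(\delta,\varepsilon_v)_{\ugqui}<0$ for all $v$) its $W$-stabilizer is trivial and it is the unique orbit element minimizing $|\gamma|$, with $|\delta|=5$. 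Thus (iii), together with the total-dimension part of (iv), follows once one shows every reachable $\gamma$ satisfies $|\gamma|\ge 6$; by a descent argument (iterating $|\gamma|$-decreasing reflections, which remain admissible and terminate) this reduces to proving that $\delta$ is not reachable. The remaining part of (iv) is to rule out $\tau_u$: a source or sink is large precisely when it fails to be small, and one checks along the orbit that every source/sink stays small and the middle vertex never becomes large, so no large vertex is ever reachable and $\tau$ can never be applied; since $\sigma$ preserves the number of vertices and $|\gamma|\ge 6$ blocks any drop in $|\alpha|$, $(\qui,\alpha)$ is $\tau\sigma$-minimal.

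The main obstacle is exactly the unreachability of $\delta$. I would prove it through the quantity $D(\gamma):=\gamma(v_1)-\gamma(v_3)$, noting that the only orbit element with $|\gamma|\le 5$ is $\delta$, which has $D=0$, and claiming $D\neq 0$ on the entire reachable set. The difficulty is that $D$ is not invariant: under the $v_1$- and $v_3$-reflections it transforms as $D\mapsto -D\pm\gamma(v_2)$, entangling it with the evolution of $\gamma(v_2)$, and a crude congruence fails (indeed $D$ and $\gamma(v_2)$ modulo $2$ do not separate $\delta$ from reachable states such as $(5,1,3)$). The cleanest rigorous route I foresee is an induction along admissible reflection words: a vector with $D=0$ could arise only from an $s_{v_1}$- or $s_{v_3}$-move applied to a state $Y$ with $D(Y)=\pm\gamma(v_2)(Y)$ -- namely the reflection-pullbacks of $\delta$ and of $(2,3,2)$ -- and I would show these never occur by tracing them back through the three orientations. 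Equivalently, one shows that no sequence of source/sink reflections composes to the single reflection $s_{v_3}$, because from orientation $A$ the letter $s_{v_3}$ can be introduced only after first passing to $B$ or $C$ and is therefore always ``sandwiched'' between occurrences of $s_{v_1}$ and $s_{v_2}$. Making this sandwiching argument precise is the crux of the whole proposition.
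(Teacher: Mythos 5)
Parts (i) and (ii) of your proposal are correct and complete; your stable representation differs from the paper's but the verification goes through (the two key facts you isolate --- that $R_{a_1}u,R_{a_2}u$ are independent for every nonzero $u$ and that $R_{a_1},R_{a_2}$ jointly surject onto $R_{v_3}$ --- do hold for your matrices). The genuine gap is in (iii)--(iv): your argument correctly reduces everything to two claims, namely that $\delta=s_{v_3}\alpha=(2,1,2)$ is never reached and that no large vertex ever appears along a reachable state, but you prove neither. You say so yourself (``Making this sandwiching argument precise is the crux of the whole proposition''), and the one concrete tool you propose, the quantity $D(\gamma)=\gamma(v_1)-\gamma(v_3)$, is one you also concede does not work as an invariant. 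As it stands the proposal establishes the framework (Weyl orbit, the triangle of reachable orientations $A,B,C$, the height monovariant, $W\alpha\cap F_{\ugqui}=\{\delta\}$ with $|\delta|=5$ the unique height minimum) but not the theorem. The claim ``every source/sink stays small and the middle vertex never becomes large'' is likewise only asserted; since the reachable set is infinite, it needs an inductive invariant.

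The missing step is closed in the paper by an observation that collapses the reachability problem entirely: at every reachable state exactly two vertices are sources or sinks (the third is the middle vertex of the acyclic orientation), no vertex is ever large, and one of the two admissible reflections is the inverse of the move just performed. Cancelling such backtracks, every sequence is forced into one of two periodic walks, $\sigma_{v_2},\sigma_{v_3},\sigma_{v_1},\sigma_{v_2},\dots$ or $\sigma_{v_1},\sigma_{v_3},\sigma_{v_2},\sigma_{v_1},\dots$, returning to the original orientation every three steps. One then propagates a short list of explicit inequalities (the paper's (9.1)--(9.4)) showing by induction that at each step the vertex about to be reflected has strictly negative Cartan pairing (so $|\gamma|$ strictly increases and the new vector has a vertex with positive pairing, hence is not in $F_{\ugqui}$) and that no large vertex appears. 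In particular the word applied to $\alpha$ is never the single letter $s_{v_3}$, and $|\gamma|>6$ after the first step, which is exactly your unreachability of $\delta$. So your ``sandwiching'' intuition is the right one, but turning it into a proof requires precisely this forced-walk analysis plus the inequality bookkeeping; the orbit-invariant route you gesture at is both unnecessary and, as you note, apparently unavailable.
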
 
\begin{proof} 
(i) We have 
$(\alpha,\varepsilon_{v_3})_{\qui}=2\alpha(v_3)-(\alpha(v_1)+\alpha(v_1)+\alpha(v_2))=2\cdot 3-(2+2+1)=1>0$. 

(ii) We have $\sum_{i=1}^3\theta(v_i)\alpha(v_i)=(-2)\cdot 2+1\cdot 1+1\cdot 3=0$. 
Consider the following representation $x=(x_{a_1},x_{a_2},x_{a_3},x_{a_4})\in \rep(\qui,\alpha)$:  
\[x:=( \left(  \begin{matrix} 1 & 0 \\ 0 & 1 \\ 0 & 0\end{matrix}\right), 
\left(  \begin{matrix} 0 & 1 \\ 0 & 0 \\ 1 & 0\end{matrix}\right), 
\left(\begin{matrix} 0 & 1\end{matrix}\right), 
\left(\begin{matrix} 1 & 0  & 0\end{matrix}\right)).\] 
Let $S$ be a proper non-zero subrepresentation of $x$, and write $\beta$ for its dimension vector. We claim that $\sum_{i=1}^3\theta(v_i)\beta(v_i)>0$. 
If $\beta(v_1)=2$ (i.e. $S_{v_1}=x_{v_1}=\F^2$), then the images of $S_{v_1}$ under $x_{a_1}$ and $x_{a_2}$ together span $x_{v_3}=\F^3$, and the image of $S_{v_1}$ under $x_{a_3}$ spans $x_{v_2}$, so $\beta=\alpha$, contrary to our assumption. 
So $\beta(v_1)\le 1$. 
If $\beta(v_1)=0$, then $\sum_{i=1}^3\theta(v_i)\beta(v_i)=
\beta(v_2)+\beta(v_3)>0$. From now on assume that $\beta(v_1)=1$. 
For any non-zero $(y_1\ y_2)^T\in S_{v_1}$ we have that 
$S_{a_1}((y_1\ y_2)^T)=(y_1 \ y_2\ 0)^T$ and 
$S_{a_2}((y_1\ y_2)^T)=(y_2\ 0\ y_1)^T$ are linearly independent and not both are contained in the kernel of $S_{a_4}$, hence $\beta(v_3)\ge 2$ and $\beta(v_2)=1$. 
Thus  $\sum_{i=1}^3\theta(v_i)\beta(v_i)\ge (-2)\cdot 1+1\cdot 2+1\cdot 1>0$, finishing the proof of our claim. Therefore $x$ is a $\theta$-stable representation of $\qui$ of dimension vector $\alpha$,  and so (ii) is proved. 

(iii) and (iv): Suppose for contradiction that  
$(\qui^{(i)},\alpha^{(i)})$ $(i=0,1,\dots,n)$ is a sequence of quiver-dimension vector pairs 
such that $(\qui^{(i+1}),\alpha^{(i+1)})=\sigma_{u_i}(\qui^{(i)},\alpha^{(i)})$ for $i=0,\dots,n-1$ and 
$\alpha^{(n)}\in F_{\qui^{(n)}}$, or this sequence satisfies the conditions 
given in Definition~\ref{def:tausigma-minimal}.  
The pair $(\qui,\alpha)$ has no large vertex, whereas $v_1$ is a small source and $v_2$ is a small sink. Therefore $(\qui^{(1)},\alpha^{(1)})=\sigma_{v_2}(\qui,\alpha)$ or 
$(\qui^{(1)},\alpha^{(1)})=\sigma_{v_1}(\qui,\alpha)$. 

Case I.: $(\qui^{(1)},\alpha^{(1)})=\sigma_{v_2}(\qui,\alpha)$. 
Note that the dimension vector $\alpha$ satisfies the following inequalitites: 
\begin{align} 
\label{eq:1} (\varepsilon_{v_2},\alpha)_{\qui}<0 \\
\label{eq:2} 2\alpha(v_1) >\alpha(v_3),\quad \alpha(v_3)>\alpha(v_2),\quad \alpha(v_1)>\alpha(v_2), 
\quad (\varepsilon_{v_1},\alpha)_{\qui}<0
\end{align} 
By \eqref{eq:1} we have that 
$(\varepsilon_{v_2},\alpha^{(1)})_{\qui^{(1)}}>0$, so 
$\alpha^{(1)}\notin F_{\qui^{(1)}}$, moreover, $|\alpha^{(1)}|>|\alpha|$. 
The inequalities \eqref{eq:2} imply that the pair $(\qui^{(1)},\alpha^{(1)})$ 
has no large vertex, and $v_3$ is a small sink for $(\qui^{(1)},\alpha^{(1)})$ with $(\varepsilon_{v_3},\alpha^{(1)})_{\qui^{(1)}}<0$. So we must have 
$(\qui^{(2)},\alpha^{(2)})=\sigma_{v_3}(\qui^{(1)},\alpha^{(1)})$, and so 
$(\varepsilon_{v_3},\alpha^{(2)})_{\qui^{(2)}}>0$. 
Thus $\alpha^{(2)}\notin F_{\qui^{(2)}}$, and $|\alpha^{(2)}|>|\alpha^{(1)}|$. 
The inequalitites \eqref{eq:2} imply that 
the pair $(\qui^{(2)},\alpha^{(2)})$ 
has no large vertex, and $v_1$ is a small sink for $(\qui^{(2)},\alpha^{(2)})$ with $(\varepsilon_{v_1},\alpha^{(2)})_{\qui^{(1)}}<0$. 
Therefore we must have 
$(\qui^{(3)},\alpha^{(3)})=\sigma_{v_1}(\qui^{(2)},\alpha^{(2)})$, and so 
$(\varepsilon_{v_1},\alpha^{(3)})_{\qui^{(3)}}>0$. 
Thus $\alpha^{(3)}\notin F_{\qui^{(3)}}$, and $|\alpha^{(3)}|>|\alpha^{(2)}|$. 
Note that the quiver $\qui^{(3)}$ is the same as $\qui$, and the inequalities 
\eqref{eq:1}, \eqref{eq:2} hold for $\alpha^{(3)}$ instead of $\alpha$ 
(in partcular, there is no large vertex for $(\qui^{(3)},\alpha^{(3)})$).  
So the above argument continues, and we can never stop. This is a contradiction. 

Case II.:  $(\qui^{(1)},\alpha^{(1)})=\sigma_{v_1}(\qui,\alpha)$. 
The argument is similar to the argument for Case I. 
We shall use the following inequalities satisfied by $\alpha$:  
\begin{align} 
\label{eq:3} (\varepsilon_{v_1},\alpha)_{\qui}<0 \\
\label{eq:4} 
\alpha(v_3) >\alpha(v_1),\quad 2\alpha(v_3)>\alpha(v_2) 
\end{align} 
Then \eqref{eq:3} implies that $\alpha^{(1)}\notin F_{\qui^{(1)}}$ and 
$|\alpha^{(1)}|>|\alpha|$. Inequalities  \eqref{eq:4} imply that 
$\alpha^{(1)}(v_2)<\alpha^{(1)}(v_1)$, hence $v_2$ is not a large vertex for 
$(\qui^{(1)},\alpha^{(1)})$, and $(\varepsilon_{v_3},\alpha^{(1)})_{\qui^{(1)}}<0$. 
It follows that 
$(\qui^{(2)},\alpha^{(2)})=\sigma_{v_3}(\qui^{(1)},\alpha^{(1)})$, 
$\alpha^{(2)}\notin F_{\qui^{(2)}}$, and $|\alpha^{(2)}|>|\alpha^{(1)}|$. 
Now inequalities  \eqref{eq:4} imply that 
$(\varepsilon_{v_2},\alpha^{(2)})_{\qui^{(2)}}<0$ and 
$(\varepsilon_{v_3},\alpha^{(2)})_{\qui^{(2)}}<0$. 
Thus there is no large vertex for $(\qui^{(2)},\alpha^{(2)})$, 
and $(\qui^{(3)},\alpha^{(3)})=\sigma_{v_2}(\qui^{(2)},\alpha^{(2)})$, 
$\alpha^{(3)}\notin F_{\qui^{(3)}}$, $|\alpha^{(3)}|>|\alpha^{(2)}|$. 
Note that $\qui^{(3)}=\qui$, and it is easy to deduce from 
\eqref{eq:3} and \eqref{eq:4} that these inequalities hold for 
$\alpha^{(3)}$ instead of $\alpha$, and moreover, 
$v_3$ is not a large vertex for $(\qui^{(3)},\alpha^{(3)})$. 
Thus we must have $(\qui^{(4)},\alpha^{(4)})=\sigma_{v_1}(\qui^{(3)},\alpha^{(3)})$.  
the process continues in the same way as above, and we can never stop. This is a contradiction, and the proof is complete. 
\end{proof} 

\section{Remarks on affine quotients} \label{sec:affine}

For the weight $\theta=0$ the moduli space $\moduli(\qui,\alpha,0)$ is the affine quotient 
$\rep(\qui,\alpha)/\negmedspace/\GL(\alpha)$. 
It is an affine variety whose points are in a natural bijection with the semisimple representations of $\qui$ of dimension vector $\alpha$ (see \cite{lebruyn-procesi}). 
It is just a point when $\qui$ is acyclic, so 
we get interesting examples only if we allow the quiver  to have oriented cycles. 
The following finiteness statement follows from the methods and results of \cite{bocklandt-lebruyn-deweyer} (although it is not explicitly stated there): 

\begin{proposition}\label{prop:affine finite} 
For any positive integer $d$ there are only finitely many $d$-dimensional affine varieties that are isomorphic 
to $\moduli(\qui,\alpha,0)$ for some quiver $\qui$ and dimension vector $\alpha$. 
\end{proposition}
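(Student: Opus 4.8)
The plan is to mimic the strategy behind Theorem~\ref{thm:fundamental domain}: reduce an arbitrary affine quotient to a bounded ``reduced'' quiver setting and then count. The crucial simplification in the affine case $\theta=0$ is that there is no weight left to vary, so each pair $(\qui,\alpha)$ determines a \emph{single} variety $\moduli(\qui,\alpha,0)=\rep(\qui,\alpha)/\negmedspace/\GL(\alpha)$; hence it suffices to exhibit, for each $d$, a finite list of pairs $(\qui,\alpha)$ whose quotients exhaust the $d$-dimensional affine quotients up to isomorphism. The first step is to strip away the non-cyclic part of the quiver. By \cite{lebruyn-procesi} in characteristic zero and \cite{donkin} in arbitrary characteristic, the algebra $\F[\rep(\qui,\alpha)]^{\GL(\alpha)}$ is generated by traces along oriented cycles; consequently any arrow or vertex not lying on an oriented cycle contributes nothing to the invariants, and deleting it leaves $\moduli(\qui,\alpha,0)$ unchanged up to isomorphism. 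After this reduction, and after splitting off the direct-product decomposition over the connected components (a $d$-dimensional product has at most $d$ positive-dimensional factors, cf.\ Lemma~\ref{lemma:disjoint union}), I may assume that $\qui$ is connected and that every arrow lies on an oriented cycle; in particular $\qui$ has no sources or sinks, so the reflections $\sigma_u$ are irrelevant here.

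Next I would apply the reduction moves of Bocklandt--Le Bruyn--Van de Weyer \cite{bocklandt-lebruyn-deweyer} on quiver settings. These are more general versions of the operation $\tau_u$ of Definition~\ref{def:tau}: besides removing a loopless large vertex---which, in case (c) of Lemma~\ref{lemma:shrink} applied with $\theta=0$, is already seen to yield a genuine isomorphism $\F[\rep(\qui,\alpha)]^{\GL(\alpha)}\cong\F[\rep(\tau_u\qui,\tau_u\alpha)]^{\GL(\tau_u\alpha)}$ through the First Fundamental Theorem---they also treat vertices carrying loops and vertices with $\alpha(v)=1$, configurations that the operations of the present paper do not cover but which inevitably occur in cyclic quivers. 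Each such move preserves the affine quotient up to isomorphism (a move that splits off a polynomial factor changes the variety only by a product with an affine space, which has to be tracked) and strictly decreases the size measure $|\qui_0|+|\alpha|$, so iteration terminates at a \emph{reduced} setting admitting no further move.

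It then remains to bound the reduced settings of fixed quotient dimension $d$. Here I would use a dimension count: since the diagonal scalars $\F^\times$ lie in every stabilizer, the maximal orbit dimension in $\rep(\qui,\alpha)$ is at most $\dim\GL(\alpha)-1$, whence $d=\dim\rep(\qui,\alpha)-\max_x\dim(\GL(\alpha)\cdot x)\ge 1-\langle\alpha,\alpha\rangle_{\qui}$, so that $-\langle\alpha,\alpha\rangle_{\qui}\le d-1$ on the cyclic part. Reducedness then forces each vertex of the (strongly connected, loop-carrying) quiver to contribute to this defect, which should yield bounds on $|\qui_0|$, $|\qui_1|$ and $\max_v\alpha(v)$ purely in terms of $d$, in the same spirit as Theorem~\ref{thm:bounding the quiver}. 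Since there are only finitely many quiver settings within such bounds, and $\theta=0$ offers no further choices, only finitely many $d$-dimensional affine quotients can arise.

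I expect the main obstacle to be twofold. First, the reduction moves of \cite{bocklandt-lebruyn-deweyer} are phrased in terms of the \emph{\'etale-local} structure of the quotient singularities (via the Luna slice theorem and local quivers), and they must be promoted to honest isomorphisms, or at worst to controlled modifications by affine-space factors, of the \emph{global} affine varieties; the FFT computation underlying Lemma~\ref{lemma:shrink}(c) suggests this is possible, but the loop and $\alpha(v)=1$ cases require separate care. Second, proving the combinatorial bound on reduced cyclic settings of fixed quotient dimension is the genuine analogue of Theorem~\ref{thm:bounding the quiver}, now in a regime where loops are allowed and where the relationship between $\dim\moduli(\qui,\alpha,0)$ and $\langle\alpha,\alpha\rangle_{\qui}$ is governed by the canonical decomposition of $\alpha$ rather than by $\theta$-stability; this is where the quantitative estimates of \cite{bocklandt-lebruyn-deweyer} would have to be extracted and assembled.
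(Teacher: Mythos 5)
Your plan follows the same broad outline as the paper's argument (reduce to strongly connected sincere settings via the trace-along-cycles description of the invariants, apply Bocklandt-type vertex removals, then bound the reduced settings), but the quantitative core is missing, and you acknowledge as much. The paper closes the gap with three short steps that your sketch does not supply. First, the reduction used is precisely R1 of \cite{bocklandt-smooth}: remove any vertex $v$ with $\langle\alpha,\varepsilon_v\rangle_{\qui}\ge 0$ or $\langle\varepsilon_v,\alpha\rangle_{\qui}\ge 0$ (after splitting off the one-loop quiver, which gives affine space); no separate treatment of loops or of $\alpha(v)=1$ beyond this is needed. Second, once $\langle\varepsilon_v,\alpha\rangle_{\qui}<0$ and $\langle\alpha,\varepsilon_v\rangle_{\qui}<0$ at every vertex, \cite[Theorem 4]{lebruyn-procesi} says the generic $\alpha$-dimensional representation is \emph{simple}, and this is what turns your inequality into the equality $d=1-\langle\alpha,\alpha\rangle_{\qui}$. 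Your derivation of $d\ge 1-\langle\alpha,\alpha\rangle_{\qui}$ from ``maximal orbit dimension $\le\dim\GL(\alpha)-1$'' is not valid in general: the generic fiber of the quotient map is a union of orbits and can exceed the maximal orbit dimension (already for a torus acting on $\F^2$ with equal weights the analogous formula fails), so simplicity of the generic representation is genuinely needed here, not just a convenience.

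Third, and most importantly, the bounds you say ``should'' follow from reducedness are obtained by two concrete computations you do not perform. The vertex and dimension-vector bounds come from
\[
d-1=-\langle\alpha,\alpha\rangle_{\qui}=-\sum_{v\in\qui_0}\alpha(v)\,\langle\varepsilon_v,\alpha\rangle_{\qui}\ \ge\ \sum_{v\in\qui_0}\alpha(v),
\]
using $-\langle\varepsilon_v,\alpha\rangle_{\qui}\ge 1$ at every vertex after the R1 reduction; this gives $|\qui_0|\le d-1$ and $|\alpha|\le d-1$ at once. The arrow bound is a separate argument your sketch omits entirely: since $\underline 1\le\alpha$ is a simple dimension vector for the strongly connected quiver $\qui$, the variety $\moduli(\qui,\underline 1,0)$ embeds into $\moduli(\qui,\alpha,0)$, so $d\ge 1-\langle\underline 1,\underline 1\rangle_{\qui}=1-|\qui_0|+|\qui_1|$, whence $|\qui_1|\le 2(d-1)$. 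Without these estimates the finiteness claim is not established; with them, no appeal to the \'etale-local machinery of \cite{bocklandt-lebruyn-deweyer} is required.
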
 
 
A consequence of Proposition~\ref{prop:affine finite} by \cite{adriaenssens-lebruyn} is that the $d$-dimensional quasi-projective moduli spaces 
$\moduli(\qui,\alpha,\theta)$ (where $\theta$ is arbitrary)  may have only finitely many possible singularities (up to \'etale isomorphism). 
We give a short proof of Proposition~\ref{prop:affine finite}. 
A quiver $\qui$ is \emph{strongly connected} if for any pair $v,w\in \qui_0$, there is a directed path from $v$ to $w$. 

\bigskip
\begin{proofof}{Proposition~\ref{prop:affine finite}} 
It follows from the description of the generators of $\F[\qui,\alpha]^{\GL(\alpha)}$ 
in \cite{lebruyn-procesi}, \cite{donkin} that any moduli space $\moduli(Q,\alpha,0)$ is isomorphic to a product of such moduli spaces where the quiver is a strongly connected subquiver of $\qui$ and the dimension vector is sincere. When $\qui$ is a one-lop quiver then $\moduli(\qui,n,0)$ is the affine $n$-space for some $n$. 
From now on we assume that our quiver is not the one-loop quiver. 
The reduction step R1 from \cite[Lemma 2.4]{bocklandt-smooth} shows that if $\langle \alpha,\varepsilon_v\rangle_{\qui} \ge 0$ or $\langle \varepsilon_v,\alpha\rangle_{\qui}\ge 0$ for some $v\in \qui_0$, then $\moduli(\qui,\alpha,0)$ is isomorphic to a moduli space associated to a quiver with one less vertices. Therefore it is sufficient to show that for each positive integer $d$ there are 
finitely many pairs $(\qui,\alpha)$ where $\qui$ is strongly connected (and is not the one-loop quiver), $\alpha$ is a sincere dimension vector for $\qui$ with 
$\langle \alpha,\varepsilon_v\rangle_{\qui} < 0$ and  $\langle \varepsilon_v,\alpha\rangle< 0$ for all $v\in \qui_0$, 
and $\dim \moduli(\qui,\alpha,0)=d$.  
Now let $(\qui,\alpha)$ be such a pair. 
Then the general $\alpha$-dimensional representation of $\qui$ is simple 
by \cite[Theorem 4]{lebruyn-procesi}, hence 
$d=1-\langle\alpha,\alpha\rangle_{\qui}$, implying that 
\begin{equation}\label{eq:bound for vertices}
d-1=-\langle \alpha,\alpha\rangle_{\qui}=-\sum_{v\in \qui_0}\alpha(v)\langle \varepsilon_v,\alpha\rangle_{\qui}\ge\sum_{v\in\qui_0}\alpha(v).\end{equation}  
Denote by $\underline{1}$ the constant $1$ dimension vector for our $\qui$, so $\underline{1}\le \alpha$. 
The decription of the $\GL$-invariants on quiver representation spaces implies that 
$\moduli(\qui,\underline{1},0)$ can be embedded into $\moduli(\qui,\alpha,0)$.  Moreover, 
$\underline{1}$ is a simple dimension vector (since it is a sincere dimension vector for the strongly connected quiver $\qui$). 
Thus we have  
\begin{equation}\label{eq:bound for edges} 
d=\dim \moduli(\qui,\alpha,0)\ge \dim \moduli(\qui,\underline{1},0)=
1-\langle \underline{1},\underline{1}\rangle_{\qui}=1-|\qui_0|+|\qui_1|.\end{equation} 
Thus we have $|\qui_0|\le d-1$ and $|\alpha|\le d-1$ by \eqref{eq:bound for vertices} 
and hence $|\qui_1|\le 2(d-1)$ by \eqref{eq:bound for edges}. 
Obviouly there are finitely many such pairs $(\qui,\alpha)$ for fixed $d$. 
\end{proofof}

 \begin{center} Acknowledgement \end{center} 
 
I thank D\'aniel Jo\'o for discussions related to the material in this paper, 
and in particular for simplifications in the proof of Proposition~\ref{prop:affine finite}. 
I am also grateful to the referee for constructive criticism that helped me to 
simplify and improve the presentation of this material. 


\end{document}